\documentclass[a4paper,english,12pt]{amsart}

\usepackage{amsmath, amssymb, amsthm, gensymb}
\usepackage{hyperref}
\usepackage{pgfplots, tikz, tikz-cd}
\usepackage{xfrac, relsize}
\usepackage{footnote}
\usepackage{subcaption}
\usepackage{enumitem}
\usepackage[utf8]{inputenc}
\usepackage{mathtools}
\usepackage{epigraph}
\usepackage{subcaption}
\usepackage{stmaryrd}
\usepackage{float}
\usepackage{mathrsfs}
\usepackage{graphicx}
\usepackage{import}
\usepackage{relsize}
\usepackage{leftindex}
\usepackage{calligra}
\usepackage{comment}
\usepackage[T1]{fontenc}
\usepackage{lmodern}
\usepackage{microtype}\DeclareMicrotypeAlias{ppl}{pplx}

\usepackage[backend=bibtex, style=alphabetic]{biblatex}
\calclayout

\hypersetup{
  colorlinks,
  citecolor = blue,
  urlcolor  = blue,
  linkcolor = blue,
}

\pgfplotsset{compat=1.16}
\usetikzlibrary{calc, arrows}

\newtheorem{definition}{Definition}[section]
\newtheorem*{question}{Question}
\newtheorem*{acknowledgements}{Acknowledgements}
\newtheorem*{example2}{Example}
\newtheorem{notation}[definition]{Notation}
\newtheorem{example}[definition]{Example}
\newtheorem{theorem}[definition]{Theorem}
\newtheorem{proposition}[definition]{Proposition}
\newtheorem{corollary}[definition]{Corollary}
\newtheorem{lemma}[definition]{Lemma}
\newtheorem{remark}[definition]{Remark}

\numberwithin{equation}{section}

\newtheorem{innercustomthm}{}
\newenvironment{customthm}[1]
  {\renewcommand\theinnercustomthm{#1}\innercustomthm}
  {\endinnercustomthm}

\newcommand{\defeq}{\vcentcolon=}

\newcommand{\Hom}{\textup{Hom}}

\newcommand{\restr}[2]{\ensuremath{\left.#1\right|_{#2}}}

\DeclareMathOperator{\Spec}{Spec}

\newcommand*{\sheafext}{\mathcal{E}\text{\kern -2pt xt}}
\renewcommand{\leq}{\leqslant}
\renewcommand{\geq}{\geqslant}

\renewcommand{\chi}{\ensuremath\raisebox{\depth}{$\mathchar"11F$}}
\renewcommand{\wp}{\ensuremath\raisebox{\depth}{$\mathchar"17D$}}

\title{Étale extensions of unipotent torsors}
\author{Gabriel Bassan}
\address{Sorbonne Université and Univeristé Paris Cité, CNRS, IMJ-PRG, F-75005 Paris, France}
\email{bassan@imj-prg.fr}
\date{\today}

\begin{document}

\begin{abstract}
In this paper we study extension problems for torsors in positive characteristic. Let $F$ be a field of characteristic $p>0$ and $U/F$ be a unipotent algebraic group. As our first main result, we prove that every $U$-torsor defined over the generic point of a discrete valuation ring $\mathcal{O}_{K}$, containing a field $F$, extends to the normalization of $\mathcal{O}_{K}$ in some finite separable extension of its fraction field. We then globalize this result and prove that for $X/F$ a normal integral curve over an algebraically closed field $F$, every $U$-torsor over an open set $X^{\circ}\subseteq X$ extends to some ramified cover of $X$ which is étale over $X^{\circ}$. As an application, we are able to find isomorphisms between certain unipotent variants of Nori's fundamental group scheme for curves.
\end{abstract}

\maketitle

\tableofcontents

\section*{Introduction}

Let $X$ be a normal integral scheme, $X^{\circ} \subseteq X$ a dense open subscheme, and $G$ a group scheme over $X$.  A basic question that appears in several guises (compactifications of torsors, ramification theory, and “tame/wild” variants of fundamental groups for example) is the following:

\begin{question}
Given a $G$-torsor $P \to X^{\circ}$, can one extend it across the boundary to all of $X$?
\end{question}

Although on nice situations ($X/F$ a normal integral variety over a field and $G/F$ a smooth connected algebraic group for example), one can extend torsors by normalizing $X$ in the function field of $P$, the resulting $X$-scheme usually cease to be a torsor due to ramification behavior on the boundary.

\begin{example2}
Let $n>1$ be an integer. Let $F$ be a field such that $\operatorname{char}(F)\nmid n$ and $F$ contain the $n$-th roots of unity. Consider $X\defeq\mathbb{P}^{1}$, $X^{\circ}\defeq X\setminus\{0,\infty\}$ and the $\mu_{n}$-torsor
\[
P\defeq \Spec F[t,t^{-1},T]/(T^{n}-t)\to X^{\circ}\simeq \Spec F[t,t^{-1}].
\]
Taking the normalization of $X$ in the function field of $P$, we obtain (up to isomorphism) the $n$-th power morphism $\mathbb{P}^{1}\to \mathbb{P}^{1}$, $t\mapsto t^{n}$. This map is ramified at $0$ and $\infty$ and therefore, cannot be a $\mu_{n}$-torsor.
\end{example2}

Nonetheless, for a finite étale group $G$, one can bypass the ramification issue by passing to some finite étale cover $V\to X^{\circ}$ and show that every $G$-torsor extends to the normalization of $X$ in (the fraction field of) some finite étale cover $V$ (see Proposition \ref{finite etale extension}). This suggests the following refined question.

\begin{question}
Given a $G$-torsor $P \to X^{\circ}$, can one extend it to some finite cover of $X$ ramified only across the boundary $X\setminus X^{\circ}$?
\end{question}

In characteristic $0$, since every finite group scheme is étale, the answer is always affirmative for finite groups. In characteristic $p>0$, where finite non-étale group schemes can occur naturally, the question becomes more delicate, even for finite group schemes. Our main goal is to show that, nevertheless, torsors under unipotent groups over curves do admit extensions after passing to some cover ramified outside of $X^{\circ}$. Our first result answers a local version of this question for unipotent groups.

\begin{customthm}{Theorem A}[Theorem \ref{unipotent}]\label{A}
Let $\mathcal{O}_{K}$ be a DVR, containing a field $F$ of characteristic $p>0$, with fraction field $K$ and perfect residue field. Let $U/F$ be a unipotent group and $P\to \Spec K$ be a $U$-torsor. Then, there exists a finite separable extension $L/K$ such that the base change $P_{L}\to \Spec L$ extends to the normalization of $\mathcal{O}_{K}$ in $L$.
\end{customthm}

We then globalize this result and obtain the following theorem.

\begin{customthm}{Theorem B}[Theorem \ref{unipotente curvas}]\label{B}
Let $X/F$ be a normal integral curve over an algebraically closed field $F$ of characteristic $p>0$ and $U/F$ be a unipotent group scheme. Let $X^{\circ}\subseteq X$ be a nonempty open set and $P\to X^{\circ}$ be a $U$-torsor. Then, there exists a finite surjective morphism $Y\to X$ which is étale over $X^{\circ}$ such that $P$ extends to $Y$.
\end{customthm}

As an application, we explain how to define a natural intermediate fundamental group scheme $\pi^{n}(X^{\circ})$ (which has been previously defined in \cite{Biswas2022-dg}), sitting between the Nori fundamental group of $X^{\circ}$ and $X$, i.e
\[
\pi^{N}(X^{\circ})\twoheadrightarrow \pi^{n}(X^{\circ})\twoheadrightarrow \pi^{N}(X)
\]
and prove that the ``gap'' between $\pi^{N}(X^{\circ})$ and $\pi^{n}(X^{\circ})$ has ``no unipotent part''. More precisely, we prove the following result.

\begin{customthm}{Theorem C}[Theorem \ref{unipotent fundamental group iso}]\label{C}
If $X/F$ is a normal separated integral curve over an algebraically closed field $F$ of characteristic $p>0$ and $X^\circ\subseteq X$ is a nonempty open set, then the morphism
\[
\pi^{N}(X^{\circ})\twoheadrightarrow \pi^{n}(X^{\circ})
\]
becomes an isomorphism after passing to the maximal pro-unipotent quotients.
\end{customthm}

\addtocontents{toc}{\protect\setcounter{tocdepth}{0}}
\subsection*{Organization of the paper}
\begin{itemize}
    \item In Section \ref{1} we go through some preliminary notions. In \ref{1.1} we give a review on nonabelian cohomology, with an important emphasis on the obstruction to lifting torsors in an exact sequence. This gives the cohomological backbone of our extension arguments in the following sections. In \ref{1.2} we precisely define and obtain some simple results on étale extensions of torsors. In \ref{1.3} we recall the ramification behavior of Artin--Schreier extensions.
    \item Section \ref{2} is devoted to proving a local version of our extension result for discrete valuation rings and the group $\alpha_{p}$. This is mostly done by explicit calculations involving uniformizers of Artin--Schreier extensions. This section provides the initial results needed to start the more abstract machinery used in the following sections.
    \item In Section \ref{3} we extend the results of Section \ref{2} to unipotent groups, but still over a DVR (\ref{A}). For this, we make a ``dévissage'' argument using the machinery of nonabelian cohomology to reduce the theorems to statements of the preceding section.
    \item In Section \ref{4} we are able to extend our results to curves and prove \ref{B} by using the results of Section \ref{3} to find local extensions of a torsor along the boundary of an open set and then use a Riemann--Roch argument to find a suitable cover which allows us to glue the local extensions.
    \item In the last section, Section \ref{5}, we show some applications (\ref{C}) of our results to the Nori fundamental group scheme of open curves.
\end{itemize}

\begin{acknowledgements}
I would like to specially thank my PhD advisors, João Pedro dos Santos and Mathieu Florence for their support, guidance and patience while discussing the topics that would come to be this article.
\end{acknowledgements}

\section*{Notation and conventions}
Throughout this paper, a ring $R$ will always be unital commutative. A variety over a field $F$ means a separated $F$-scheme of finite type. A curve over a field $F$ is a variety of dimension $1$. Every group scheme $G/X$ is supposed to be fppf over its base scheme $X$. An algebraic group $G$ over a field $F$ is an $F$-group scheme. An algebraic group $G/F$ is said to be unipotent if it admits a composition series over $\overline{F}$ with successive quotients isomorphic to subgroups of $\mathbb{G}_{a}$.

All torsors and cohomology groups are taken in the fppf topology. A $G$-torsor $P\to X$ means a right $G$-torsor.

Given a field $K$, a valuation on $K$ will always be assumed to be discrete (with valuation group a discrete subgroup of $\mathbb{R}$). In this setup, we usually denote the valuation ring of $v$ by $\mathcal{O}_{K}$ and its residue field by $k$. For a finite extension $L/K$ we usually denote an extension of $v$ to $L$ by $v_{L}$ and we write $e(v_L|v)$ for the ramification index and $f(v_L|v)$ for the residue degree. By a normalized valuation we mean a valuation with image equal to $\mathbb{Z}\subseteq \mathbb{R}$. Since every discrete subgroup of $\mathbb{R}$ is of the form $\alpha\mathbb{Z}\subseteq\mathbb{R}$, we can always normalize (non trivial) valuations in a unique way. We will sometimes say that a normalized valuation $v_{L}$ extends some normalized valuation $v$ to mean that $v_{L}$ is the normalization of a valuation extending $v$. We will usually denote the normalization of $\mathcal{O}_{K}$ in $L$ by $\mathcal{O}_{L}$.

Given a field $K$ of characteristic $p>0$, we will denote the Artin--Schreier polynomial $\wp(T)\in K[T]$ as $\wp(T)\defeq T^{p}-T\in K[T]$. Given $f\in K$, we denote the splitting field of $\wp(T)-a\in K[T]$ by $K_{\wp,f}/K$. Whenever $\wp(T)-f$ is irreducible, this is a finite Galois extension of $K$ with Galois group $\mathbb{Z}/p\mathbb{Z}$ called an Artin--Schreier extension.

\addtocontents{toc}{\protect\setcounter{tocdepth}{2}}
\section{Preliminaries and auxiliary results}\label{1}
We start by covering some preliminaries. We will not necessarily give full proofs of propositions, but sketches and references will be given.

\subsection{Nonabelian cohomology and twists}\label{1.1}
\begin{notation}
Let $G$ be a group scheme over a scheme $X$, we denote by
\[
H^{1}(X,G)
\]
the set of isomorphism classes of (right) $G$-torsors (not necessarily representable). This set has a preferred point given by the trivial torsor. With this, we will usually look at $H^{1}(X,G)$ as a pointed set.
\end{notation}

\begin{remark}
If $G$ is commutative, this group agrees with the first cohomology of $G$ as an abelian sheaf on the fppf site of $X$.
\end{remark}

\begin{definition}
Let $G$ be a group scheme over $X$. Let $P$ be a (right) $G$-torsor, and let $Y \to X$ be an $X$-scheme equipped with a (left) $G$-action $a\colon G\times_{X}Y\to Y$. Define the twist (or contracted product) of $Y$ by $P$ to be the fppf sheaf
\[
\leftindex^{P,a}{\;Y}\defeq P\times^{G,a} Y \defeq (P\times_{X} Y)/G,
\]
where $G$ acts (on the left) on $P\times_{X} Y$ by
\[
g\cdot (p,y) \defeq (p\cdot g^{-1},\; a(g)\cdot y).
\]
When no confusion arises, we will omit the action $a$ in the notation and denote the twist simply by $\leftindex^{P}{\;Y}$ or $P\times^{G} Y$.
\end{definition}

If $f\colon G\to H$ is a morphism of $X$-group schemes, and $P\to X$ is a $G$-torsor, the same construction yields an $H$-torsor $P_{H}\defeq f_{\ast}P\defeq P\times^{G}H$, where $G$ acts on $H$ through $f$. We call this torsor the induced torsor. Twisting is compatible with morphisms of $X$-group schemes $H\to G$, functorial with respect to $Y$ and compatible with base change $X^{\prime}\to X$ by an fppf morphism.

\begin{remark}
Observe that given a class $\alpha\in H^{1}(X,G)$, two different representatives $P$ and $Q$ of $\alpha$, give rise to isomorphic twists, but not in a unique way. In this sense, the construction really depends on the torsor and not only in its cohomology class. Nonetheless, we will sometimes write $\leftindex^{\alpha}{\;Y}$ to mean a twist by some representative of $\alpha$.
\end{remark}

\begin{proposition}
Let $G$ be a group scheme over $X$ and $Y\to X$ an affine morphism with a $G$-action. Then $\leftindex^{P}{\;Y}$ is representable by an $X$-scheme. Moreover, any property of morphisms $Y \to X$ which is fppf-local on the base and stable under base change (e.g.\ affine, separated, locally of finite presentation, smooth, étale, flat, proper, etc) holds for $\leftindex^{P}{\;Y}\to X$ if and only if it holds for $Y \to X$.
\end{proposition}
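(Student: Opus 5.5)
The plan is to prove representability by fppf descent, using that $P$ is fppf-locally trivial and $Y\to X$ is affine. I then deduce the transfer of properties from the fact that $\leftindex^{P}{\;Y}$ and $Y$ become isomorphic after an fppf cover of $X$.

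Since $P\to X$ is a torsor, there is an fppf covering $X'\to X$ with a section $s\in P(X')$. This section induces an isomorphism of $X'$-sheaves
\[
Y_{X'} \xrightarrow{\ \sim\ } (\leftindex^{P}{\;Y})_{X'},\qquad y\mapsto [(s,y)].
\]
It is well defined because $G$ acts freely and transitively on $P$, so every class $[(p,y)]$ has a unique representative with first coordinate $s$. Hence $(\leftindex^{P}{\;Y})_{X'}$ is represented by the affine $X'$-scheme $Y_{X'}$. Over $X'':=X'\times_X X'$ the two pullbacks of $s$ differ by a unique $g\in G(X'')$, i.e.\ $p_2^*s=p_1^*s\cdot g$. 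Transporting the canonical descent datum of the sheaf $\leftindex^{P}{\;Y}$ through these isomorphisms therefore gives the descent datum on $Y_{X'}$ defined by the action $a(g)\colon Y_{X''}\to Y_{X''}$, or its inverse depending on conventions. The cocycle condition holds automatically, since it is just the descent datum of an fppf sheaf. Because $Y_{X'}\to X'$ is affine, fppf descent for affine morphisms (descent of quasi-coherent algebras) makes this datum effective. It yields an affine $X$-scheme $Z$ with $Z_{X'}\simeq Y_{X'}$ compatibly with the descent data. Both $Z$ and $\leftindex^{P}{\;Y}$ are fppf sheaves on $X$ that agree with compatible gluing data on the cover, so $Z\simeq \leftindex^{P}{\;Y}$.

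For the second claim, let $\mathcal{P}$ be a property of morphisms that is fppf-local on the base and stable under base change. Then $\mathcal{P}$ holds for $\leftindex^{P}{\;Y}\to X$ if and only if it holds for $(\leftindex^{P}{\;Y})_{X'}\to X'$, if and only if it holds for $Y_{X'}\to X'$, if and only if it holds for $Y\to X$. The first and last equivalences combine locality with stability under base change; the middle one is the isomorphism above. For the examples listed (affine, separated, locally of finite presentation, smooth, étale, flat, proper), fppf-locality on the base is standard EGA IV / Stacks Project material.

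The only delicate step is the effectivity of the descent datum. Without an affineness assumption, fppf descent of schemes can fail, which is exactly why the hypothesis that $Y\to X$ is affine is imposed. Checking that the descent datum is the one given by the action, including the sign and inverse conventions in $g\cdot(p,y)=(pg^{-1},gy)$, is routine bookkeeping. I would cite the Stacks Project (descent of affine morphisms) rather than redo it.
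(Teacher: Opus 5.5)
Your proposal is correct and follows the same route as the paper. You get representability from effective fppf descent for affine morphisms after trivializing $P$ on an fppf cover, and the transfer of properties from the fppf-local isomorphism $(\leftindex^{P}{\;Y})_{X'}\cong Y_{X'}$; you just write out the descent datum and the chain of equivalences that the paper's two-line proof leaves implicit.
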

\begin{proof}
    For the representability, this follows from effective fppf descent of affine morphisms (see \cite[\href{https://stacks.math.columbia.edu/tag/0245}{Tag 0245}]{stacks-project}). The second assertion follows from the fact that $\leftindex^{P}{\;Y}$ and $Y$ are fppf locally isomorphic.
\end{proof}

\begin{proposition}
Let $A$ be an $X$-group scheme and suppose $G$ acts on $A$ by group scheme automorphisms. Then $\leftindex^{P}{A}$ has a canonical $X$-group scheme structure such that $(\leftindex^{P}{A})_{U}\cong A_{U}$ as group schemes after passing to any fppf cover $U\to X$ trivializing $P$. In particular, if $A$ is commutative, then so is $\leftindex^{P}{A}$.
\end{proposition}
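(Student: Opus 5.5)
The plan is to build the group structure on $\leftindex^{P}{A}$ by twisting the group-law diagrams of $A$ themselves, using the previous proposition and the fact that twisting is functorial in $Y$ and compatible with fibre products. First, representability: a group scheme $A$ is not assumed affine, so strictly the previous proposition applies only when $A\to X$ is affine. In general I would just regard $\leftindex^{P}{A}$ as an fppf sheaf of groups, which is all that is needed for the cohomological arguments later, and add that it is representable when $A$ is affine over $X$.

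The key observation is that the contracted product commutes with fibre products over $X$ when $G$ acts diagonally. That is, there is a canonical isomorphism
\[
\leftindex^{P}{(A\times_{X}A)}\;\cong\;\leftindex^{P}{A}\times_{X}\leftindex^{P}{A}.
\]
To see this, write down the natural map $[p,(a,b)]\mapsto([p,a],[p,b])$. It is well defined because the diagonal action is compatible with the quotient. It is an isomorphism because this can be checked fppf locally on $X$. After base change to a cover $U\to X$ with $P_U$ trivial, both sides become $A_U\times_U A_U$ and the map becomes the identity.

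Next, because $G$ acts by group automorphisms, the multiplication $m\colon A\times_X A\to A$, the inverse $i\colon A\to A$ and the unit $e\colon X\to A$ are all $G$-equivariant. Here $X$ carries the trivial action, and $\leftindex^{P}{X}=P/G=X$. By functoriality of twisting in $Y$, these maps induce morphisms of sheaves
\[
\leftindex^{P}{m}\colon \leftindex^{P}{A}\times_X\leftindex^{P}{A}\to\leftindex^{P}{A},
\]
together with $\leftindex^{P}{i}$ and $\leftindex^{P}{e}$.

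The group axioms (associativity, unit and inverse) are equalities of morphisms of fppf sheaves, so they can be verified fppf locally. On a trivializing cover $U$, the twisted maps become $m_U$, $i_U$ and $e_U$ under the identification $(\leftindex^{P}{A})_U\cong A_U$, and there the axioms hold. This simultaneously proves that $(\leftindex^{P}{A})_U\cong A_U$ as group schemes. Commutativity is again an equality of morphisms, $\leftindex^{P}{m}\circ\mathrm{sw}=\leftindex^{P}{m}$, so it also descends from $U$.

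Canonicity means independence of the choice of $U$ and of trivializations. It holds because the structure is defined globally through functoriality, with no trivialization used. The only genuinely delicate point is checking that the map in the fibre-product comparison is well defined on the quotient sheaf and compatible with sheafification. I would handle this by defining it on presheaf sections and passing to sheafifications.
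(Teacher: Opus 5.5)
Your proposal is correct and is essentially the paper's argument. The paper says in one line that the group law is inherited from $A$ through the gluing (descent) description of the twist, and that commutativity is an equality of morphisms checkable fppf locally. You make the same transport of structure explicit via functoriality of the contracted product and its compatibility with fibre products, and your caveat that representability is only guaranteed when $A\to X$ is affine (by the previous proposition) is a fair point that the paper passes over.
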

\begin{proof}
From the construction of the twist as a gluing of $A$ through an fppf cover, it is clear that it inherits a group structure from $A$ which is compatible with trivializations. Moreover, since commutativity amounts to an equality between morphisms, it can be checked fppf locally.
\end{proof}

Assume we have an exact sequence of $X$-group schemes
\[
1 \to A \xrightarrow{u} G \xrightarrow{v} H \to 1
\]
with $A$ comutative. Conjugation gives an action of $G$ on $A$, and since $A$ is abelian, this factors through an action (the adjoint action) of $H$ on $A$ which we will denote by $Ad^{H}_{A}\colon H\times_{X} A\to A$ or sometimes simply by $Ad$. For an $H$-torsor $P$, we can therefore define the twist $\leftindex^{P}{A}$ using the $H$-action on $A$. Under these assumptions, we have the following proposition.

\begin{proposition}\label{obstruction class}
Let $P\to X$ be an $H$-torsor. Then, there exists an element (the obstruction class)
\[
d(P)\in H^2(X,\leftindex^{P}{A})
\]
such that $d(P)=0$ if and only if $P$ is induced from a $G$-torsor. Moreover, if $d(P)=0$, then $(v_{\ast})^{-1}([P])$ is a homogeneous space under $H^{1}(X,\leftindex^{P}{A})$.
\end{proposition}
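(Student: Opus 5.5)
The plan is to use Giraud's nonabelian cohomology in the gerbe formulation. An explicit Čech $2$-cocycle would also work, but fppf Čech cohomology need not compute $H^{2}$, so I would avoid it. Given the $H$-torsor $P$, define the stack of liftings $\mathcal{L}_{P}$ on the big fppf site of $X$. Its objects over $T\to X$ are pairs $(Q,\phi)$, where $Q$ is a $G_{T}$-torsor and $\phi\colon v_{\ast}Q\xrightarrow{\sim}P_{T}$ is an isomorphism of $H_{T}$-torsors; morphisms are isomorphisms of $G$-torsors compatible with the $\phi$'s. The argument then has three steps.

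First, $\mathcal{L}_{P}$ is a gerbe. It is a stack because torsors and their morphisms satisfy fppf descent. It is locally nonempty: on a cover $T$ trivializing $P$, the trivial $G$-torsor with the induced trivialization is an object. Any two objects are locally isomorphic: locally both $Q,Q'$ are trivial, and the discrepancy between $\phi,\phi'$ is a section $h$ of $H$. Since $v$ is an epimorphism of fppf sheaves, $h$ lifts fppf-locally to $G$, and such a lift gives an isomorphism.

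Second, compute the band. The automorphism sheaf of $(Q,\phi)$ is the subsheaf of $\underline{\mathrm{Aut}}_{G}(Q)=\leftindex^{Q}{G}$ (twist by inner action) mapping to the identity in $\leftindex^{P}{H}$. This is $\leftindex^{Q}{A}$, with $A$ twisted through conjugation. Because $A$ is commutative, conjugation on $A$ factors through $H$, as noted before the statement. Hence $\leftindex^{Q}{A}\cong \leftindex^{v_{\ast}Q}{A}\cong \leftindex^{P}{A}$ via $\phi$. The main check, and the step I expect to be the obstacle, is that these identifications are canonical: they must commute with all isomorphisms $(Q,\phi)\to(Q',\phi')$ and with base change. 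This again uses that inner automorphisms from $A$ act trivially on $A$. With this, $\mathcal{L}_{P}$ is an abelian gerbe banded by the commutative group scheme $\leftindex^{P}{A}$ (Proposition above).

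Third, invoke Giraud's theorem. Equivalence classes of gerbes banded by an abelian sheaf $B$ correspond bijectively to $H^{2}(X,B)$, and the neutral class corresponds to gerbes admitting a global object. Set $d(P)$ to be the class of $\mathcal{L}_{P}$. Then $d(P)=0$ iff $\mathcal{L}_{P}(X)\neq\emptyset$, that is, iff there is a $G$-torsor $Q$ with $v_{\ast}Q\cong P$.

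For the last assertion, suppose $d(P)=0$. The set of isomorphism classes of objects of a neutral gerbe banded by abelian $B$ is a torsor under $H^{1}(X,B)$. Concretely, given $(Q,\phi)$ and a $\leftindex^{P}{A}$-torsor $E$, twist $Q$ by $E$ through the central-in-the-band action. Conversely, for two objects, $\underline{\mathrm{Isom}}((Q,\phi),(Q',\phi'))$ is a $\leftindex^{P}{A}$-torsor. These two operations are mutually inverse. Finally, forget $\phi$: $(v_{\ast})^{-1}([P])$ is the image of this set of classes, i.e. its quotient by the action of $\mathrm{Aut}(P)$. The transitive $H^{1}(X,\leftindex^{P}{A})$-action descends to that quotient, so $(v_{\ast})^{-1}([P])$ is a homogeneous space under $H^{1}(X,\leftindex^{P}{A})$.
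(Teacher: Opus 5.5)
\emph{Verdict: the first part is correct, but your last step fails, and the homogeneous-space claim cannot be proved as stated because it is false in general.}

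Your gerbe of liftings, the identification of its band with $\leftindex^{P}{A}$, and the appeal to Giraud match the paper's argument. The first assertion ($d(P)$ exists, and $d(P)=0$ iff $P$ lifts) is fine.

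The gap is the sentence claiming that the transitive action descends to the quotient. It is true that isomorphism classes of pairs $(Q,\phi)$ form an $H^{1}(X,\leftindex^{P}{A})$-torsor. It is also true that $(v_{\ast})^{-1}([P])$ is their quotient by $\mathrm{Aut}(P)=\Gamma(X,\leftindex^{P}{H})$. But the two actions do not commute. Replacing $\phi$ by $\sigma\circ\phi$ changes the identification $\leftindex^{Q}{A}\cong\leftindex^{P}{A}$ by $\mathrm{Ad}(\sigma)$. So for a class $x$ of pairs you only get $\sigma\cdot(E\cdot x)=(\sigma_{\ast}E)\cdot(\sigma\cdot x)$, where $\sigma_{\ast}$ is the automorphism of $H^{1}(X,\leftindex^{P}{A})$ induced by the adjoint action. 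The action therefore descends when $\mathrm{Aut}(P)$ acts trivially on $H^{1}(X,\leftindex^{P}{A})$ (for instance when $A$ is central), but not in general.

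This cannot be repaired, because the literal claim is false. Take $X=\Spec\mathbb{F}_{q}$, $G=S_{3}$, $A=A_{3}\cong\mathbb{Z}/3$, $H=\mathbb{Z}/2$ (constant groups), and $P$ trivial.
\begin{itemize}
\item $H^{1}(X,\leftindex^{P}{A})=\mathrm{Hom}(\widehat{\mathbb{Z}},\mathbb{Z}/3)$ has three elements.
\item $(v_{\ast})^{-1}([P])$ consists of the conjugacy classes of even permutations in $S_{3}$, so it has two elements. Equivalently it is $(\mathbb{Z}/3)/\{\pm 1\}$, with $\mathrm{Aut}(P)=\mathbb{Z}/2$ acting by $-1$.
\item A group of order $3$ cannot act transitively on a $2$-element set.
\end{itemize}

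The paper's own proof has the same issue: it establishes homogeneity for isomorphism classes of liftings and tacitly identifies these with $(v_{\ast})^{-1}([P])$.

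What your argument does prove is a weaker statement. Fix one lift $Q_{0}$. Then the map $H^{1}(X,\leftindex^{P}{A})\to(v_{\ast})^{-1}([P])$, sending $E$ to the class of the twist of $Q_{0}$ by $E$, is surjective, and its fibres are orbits of an action of $\Gamma(X,\leftindex^{P}{H})$ (cf.\ Serre, Galois Cohomology, I.5.5). This is all the paper uses later, in the step ``$\gamma\cdot\beta_{K}=\alpha$'' taken relative to the base point $\beta_{K}$. You should state and prove that version, or restrict the homogeneous-space claim to central $A$.
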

\begin{proof}
Using the language of gerbes, this statement can be proved as follows. Consider the fibered category (over the fppf site of $X$) $K(P)$ of lifts of $P$ to $G$. $K(P)$ is a gerbe banded by $\leftindex^{P}{A}$ and therefore, since $H^{2}(X,\leftindex^{P}{A})$ classifies gerbes banded by $\leftindex^{P}{A}$, defines a class $d(P)\in H^{2}(X,\leftindex^{P}{A})$. Moreover, $K(P)$ is neutral (i.e. has a global section) if and only if $d(P)=0$. This shows the first part of the statement. Now, if $d(P)=0$ (i.e. $K(P)$ is neutral), the choice of an $H$-torsor over $X$ lifting $P$ gives an equivalence of gerbes between $K(P)$ and the gerbe of $\phantom{ }\leftindex^{P}{A}$-torsors. Therefore, the set of global lifts of $P$ to $G$ (up to isomorphism) is a homogeneous space under the group $H^{1}(X,\leftindex^{P}{A})$. This is all done in \cite{GiraudCNA}, more precisely in chapter IV (see specifically Remarque $2.5.9$ on page 244).

We can also see this in a more ``hands-on'' way using cocycles. For that, fix an $H$-valued $1$-cocycle $\{h_{ij}\}_{i,j}$ on a cover $\{U_{i}\to X\}_{i}$ representing $[P]\in H^{1}(X,H)$. After possibly refining the covering, we can lift each $h_{ij}$ to $g_{ij}\in G(U_{ij})$. On $U_{ijk}$, we can define
\[
a_{ijk}=g_{ij}g_{jk}g^{-1}_{ik},
\]
which is an element of $A(U_{ijk})$ since its image in $H$ is $1$. If $a_{ijk}=1$ for all $i,j,k$, then $\{g_{ij}\}_{i,j}$ is a $G$-valued $1$-cocycle which gives a $G$-torsor lifting $P$. A calculation shows that $\{a_{ijk}\}_{i,j,k}$ satisfy the identities
\[
(Ad(h_{ij})a_{jkl})a_{ijl}=a_{ijk}a_{ikl}
\]
in $A(U_{ijk})$ for all $i,j,k$. This is exactly the condition for an $\leftindex^{P}{A}$-valued $2$-cocycle, which gives a class $d(P)\in H^{2}(X,\leftindex^{P}{A})$. One can then show that this class does not depend on the choice of the $1$-cocycle $\{h_{ij}\}_{i,j}$ and of the lifts $g_{ij}$. Moreover, if $d(P)=0$, then $\{a_{ijk}\}_{i,j,k}$ is the image of a $\leftindex^{P}{A}$-valued $1$-cocycle $\{b_{ij}\}_{i,j}$ by the Čech coboundary map. We can then modify $\{g_{ij}\}_{i,j}$ to $\{b_{ij}g_{ij}\}_{i,j}$ which, after a calculation, is a $G$-valued $1$-cocycle which maps to $\{h_{ij}\}_{i,j}$, i.e. we can find a $G$-torsor lifting $P$. The fact that the set of lifts is a homogeneous space under $H^{1}(X,\leftindex^{P}{A})$ comes from the fact that any two $G$-valued $1$-cocycles lifting $\{h_{ij}\}_{i,j}$, differ by an $\leftindex^{P}{A}$-valued $1$-cocycle.
\end{proof}

\begin{remark}
If $A\triangleleft G$ is central, then the $H$-action on $A$ is trivial and $\leftindex^{P}{A}\cong A$ for all $P$. In this case we have a genuine morphism $d\colon H^{1}(X,H)\to H^2(X,A)$ and the (nonempty) fibers of $v_{\ast}\colon H^{1}(X,G)\to H^{1}(X,H)$ are homogeneous spaces under $H^1(X,A)$. Also, if $G$ and $H$ are also comutative, then all of the preceding discussion agrees with the long exact sequence in cohomology.
\end{remark}

Before finishing this section, we prove a lemma relating torsors induced through the absolute Frobenius morphism and the Frobenius twist of a torsor.

\begin{lemma}\label{twist e frobenius}
    Let $R$ be a ring of characteristic $p>0$, $G/R$ a group scheme and $P$ a $G$-torsor (not necessarily representable). Let $F^{N}_{G}\colon G\to G^{(p^{N})}$ be the $N$th Frobenius morphism of $G$. Then $(F^{N}_{G})_{\ast}P\cong P^{(p^{N})}$ as $G^{(p^{N})}$-torsors.
\end{lemma}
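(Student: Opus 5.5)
The plan is to build an explicit equivariant morphism $P\to P^{(p^{N})}$ and then use the standard fact that a morphism from a $G$-torsor to a $G'$-torsor that is equivariant along a homomorphism $G\to G'$ induces an isomorphism from the induced torsor to the target. First I fix conventions. Let $\Phi\colon \Spec R\to \Spec R$ be the $N$th power of the absolute Frobenius. For any fppf sheaf $Z$ over $R$, set $Z^{(p^{N})}\defeq \Phi^{\ast}Z$. Absolute Frobenius is natural: for every $R$-scheme (or sheaf) $Z$ the absolute Frobenius $\mathrm{Frob}_{Z}$ lies over $\Phi$. Hence it factors as $Z\to Z^{(p^{N})}\to Z$, and the first map is the relative Frobenius $F^{N}_{Z}$. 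For sheaves $Z$ one takes $F^{N}_{Z}(T)\colon Z(T)\to Z^{(p^{N})}(T)$ to be the map induced by $\mathrm{Frob}_{T}$. This is natural in $Z$ and compatible with fibre products. In particular, $F^{N}_{G}$ is a group homomorphism, and since $\Phi^{\ast}$ preserves torsors, $P^{(p^{N})}$ is a $G^{(p^{N})}$-torsor.

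The key step is to show that $F^{N}_{P}\colon P\to P^{(p^{N})}$ is equivariant along $F^{N}_{G}$, that is,
\[
F^{N}_{P}(x\cdot g)=F^{N}_{P}(x)\cdot F^{N}_{G}(g).
\]
This follows from naturality of relative Frobenius applied to the action map $P\times_{R} G\to P$, together with the identification $(P\times_{R}G)^{(p^{N})}\cong P^{(p^{N})}\times_{R} G^{(p^{N})}$ and $F^{N}_{P\times G}=F^{N}_{P}\times F^{N}_{G}$.

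Next I would use this map to produce the isomorphism. Define
\[
P\times^{G}G^{(p^{N})}\to P^{(p^{N})},\qquad [x,h]\mapsto F^{N}_{P}(x)\cdot h.
\]
This is well defined because $[x g^{-1}, F^{N}_{G}(g)h]$ is sent to the same point, by the equivariance just established. It is $G^{(p^{N})}$-equivariant by construction. Any equivariant morphism between $G^{(p^{N})}$-torsors is an isomorphism, and this can be checked fppf-locally, where both sides are trivial and the map is translation. This gives $(F^{N}_{G})_{\ast}P\cong P^{(p^{N})}$.

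The main obstacle is the bookkeeping when $P$ is not representable. There I have to make sure relative Frobenius is well defined for fppf sheaves, functorial, and compatible with products and with the torsor structure. I would handle this by defining $F^{N}$ on $T$-points via $\mathrm{Frob}_{T}$ as above. Alternatively, I would reduce to a Čech cocycle: if $P$ is given by $\{g_{ij}\}$ on a cover $\{U_{i}\}$, then $P^{(p^{N})}$ is given by the pulled-back cocycle on $\{U_{i}^{(p^{N})}\}$. After refining to the cover $\{U_{i}\}$ through Frobenius, this cocycle equals $\{F^{N}_{G}(g_{ij})\}$, which is exactly the cocycle of the induced torsor.
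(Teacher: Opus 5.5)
Your proposal is correct and matches the paper's argument: the paper also obtains $F^{N}_{P}(xg)=F^{N}_{P}(x)F^{N}_{G}(g)$ from the commutative square relating the action map to the Frobenius morphisms, and then descends $(x,h)\mapsto F^{N}_{P}(x)h$ from $P\times_{R}G^{(p^{N})}$ to the contracted product. Your extra points make explicit what the paper leaves implicit, namely the $T$-point definition of relative Frobenius for non-representable $P$ and the fact that an equivariant morphism of $G^{(p^{N})}$-torsors is automatically an isomorphism.
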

\begin{proof}
    The proof can be found in \cite[Lemma 3.1]{Florence_2013}, but we will give the argument here. Consider the morphism $\Psi\colon P\times_{R} G^{(p^{N})}\to P^{(p^{N})}$ given on the level of functors by $(p,h)\mapsto F^{N}_{P}(p)h$. This map is clearly $G^{(p^{N})}$-invariant (with $G^{(p^{N})}$ (right) acting on the left-hand side by acting on the second coordinate). Moreover, it is $G$-invariant for the diagonal (left) action of $G$ on $P^{(p^{N})}\times_{R} G^{(p)}$ and the trivial action on $P^{(p^{N})}$. Indeed, for points $p$ (resp. $g$, $h$) of $P$ (resp. $G$, $G^{(p^{N})})$, we have
    \[
    \Psi(g\cdot (p,h))=\Psi(pg^{-1},F_{G}^{N}(g)h)=F^{N}_{P}(pg^{-1})F^{N}_{G}(g)h.
    \]
    But since the diagram
    \[
    \begin{tikzcd}
        P\times_{R} G \ar[r, "a"]\ar[d, "F_{P}^{N}\times F_{G}^{N}"] & P\ar[d, "F_{P}^{N}"]
        \\
        P^{(p^{N})}\times_{R} G^{(p^{N})} \ar[r, "a^{(p^{N})}"] & P^{(p^{N})}
    \end{tikzcd}
    \]
    commutes, we have that $F^{N}_{P}(pg^{-1})=F^{N}_{P}(p)F^{N}_{G}(g^{-1})$ and therefore,
    \[
    \Psi(g\cdot (p,h))=F^{N}_{P}(pg^{-1})F^{N}_{G}(g)h=F^{N}_{P}(p)F^{N}_{G}(g^{-1})F^{N}_{G}(g)h=F^{N}_{P}(p)h=\Psi(p,h).
    \]
    Thus, $\Psi$ descends to an isomorphism between $(F^{N}_{G})_{\ast}P$ and $P^{(p^{N})}$.
\end{proof}

\subsection{Étale extensions of torsors}\label{1.2}
We begin by recalling the notion of étale extensions of torsors.

\begin{definition}\label{etale extension def}
    Let $X$ be a normal integral scheme, $X^{\circ}\subseteq X$ a dense open subscheme and $G/X$ an $X$-group scheme. Given $P\to X^{\circ}$ a $G$-torsor, we say that $P$ étale extends to $X$ (or is étale extendable) if there exists a connected normal scheme $Y$ and a finite surjective morphism $f\colon Y\to X$ étale over $X^{\circ}$ together with a $G_{Y}$-torsor $P_{Y}\to Y$ whose restriction to $f^{-1}(X^{\circ})$ is (isomorphic to) $f^{\ast}P\to f^{-1}(X^{\circ})$. We say that any such $P_{Y}$ is an étale extension of $P$ over $Y$.
\end{definition}

\begin{remark}
    Let me point out that if $X$ is a normal integral variety over a field, this definition is equivalent to asking the following: there exists a connected étale cover $f\colon V\to X^{\circ}$ such that the torsor $f^{\ast}P\to X^{\circ}$ extends to the normalization of $X$ in $K(X^{\circ})$. This is the definition of a torsor ``étale locally extending'' given in \cite[Definition 3.2.]{Biswas2022-dg}. Indeed, this definition implies ours by taking $f\colon Y\to X$ as the normalization of $X$ in the function field $K(V)$ (by \cite[Corollary 12.52]{Gortz2020-zv} $f\colon Y\to X$ will be finite) and observing that $f^{-1}(X^{\circ})=V$. On the other hand, given our $f\colon Y\to X$, setting $V=f^{-1}(X^{\circ})$ gives us a connected finite étale cover $V\to X^{\circ}$ and moreover, since $Y$ is normal, it is the normalization of $X$ in $K(V)=K(Y)$ is $Y$.
\end{remark}

From now on, let us fix a field $F$. We will be interested in understanding which torsors admit étale extensions. First of all, recall that given an algebraic group $G/F$, we have an exact sequence
\[
1\to G^{\circ}\to G\to \pi_{0}(G)\to 1\;,
\]
where $\pi_{0}(G)$ is the the group of connected components of $G$, which is finite étale, and $G^{\circ}$ is the identity component of $G$.

For finite étale groups, it is easy to see that all torsors étale extend.

\begin{proposition}\label{finite etale extension}
    Let $G/F$ be a finite étale group scheme, $X/F$ a normal integral $F$-scheme and $X^{\circ}\subseteq X$ a dense open subscheme. Then every $G$-torsor over $X^{\circ}$ étale extends to $X$.
\end{proposition}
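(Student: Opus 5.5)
The plan is to trivialize the torsor itself by pulling back along a connected component of it, and then take a normalization. Since $G$ is finite étale over $F$, the structure morphism $P\to X^{\circ}$ is finite étale, because these properties are fppf local on the base and $P$ is fppf locally isomorphic to $G_{X^{\circ}}$. In particular $P$ is representable by the proposition on twists of affine schemes, since $G$ is affine.

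Since $X^{\circ}$ is normal and integral and $P\to X^{\circ}$ is étale, $P$ is normal. Hence $P$ is a finite disjoint union of normal integral schemes. Choose a connected component $V\subseteq P$. The restriction $V\to X^{\circ}$ is still finite étale. It is also surjective: its image is open (étale maps are open) and closed (finite maps are closed) in the connected $X^{\circ}$.

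Now take $Y$ to be the normalization of $X$ in the function field $K(V)$, which is a finite separable extension of $K(X)$. Then $Y$ is normal and connected, and $f\colon Y\to X$ is integral and surjective. It is also finite: this holds whenever $X$ is Nagata, e.g.\ when $X$ is of finite type over $F$ as in the paper's setting, by \cite[Corollary 12.52]{Gortz2020-zv}. This finiteness is the one point needing care in full generality; for an arbitrary normal integral $X$ I would either add that hypothesis or note that a finite separable extension of the function field keeps the normalization finite locally, since the trace form argument works for normal noetherian domains. Because $V$ is normal and finite over $X^{\circ}$ with function field $K(V)$, it is the normalization of $X^{\circ}$ in $K(V)$. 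Normalization commutes with restriction to opens, so $f^{-1}(X^{\circ})=V$, and $f$ is étale over $X^{\circ}$.

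It remains to see that $f^{\ast}P=P\times_{X^{\circ}}V$ is trivial. The inclusion $V\hookrightarrow P$ gives a section of $P\times_{X^{\circ}}V\to V$, namely $v\mapsto (v,v)$. A torsor with a section is trivial, so $f^{\ast}P\cong G_{V}$. Hence the trivial torsor $G_{Y}\to Y$ restricts to $f^{\ast}P$ on $f^{-1}(X^{\circ})$, which is exactly the étale extension required by Definition \ref{etale extension def}. The only potential obstacle is the finiteness of the normalization noted above; everything else is formal.
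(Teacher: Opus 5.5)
Your proposal is correct and follows the paper's proof. Like the paper, you pull $P$ back along a connected component $V$ (the paper's $T$), where it becomes trivial via the diagonal section, and extend the trivial torsor to the normalization of $X$ in $K(V)$. You flag that this normalization must be finite; the paper's one-line argument assumes this tacitly. It holds in the paper's intended setting of varieties, or more generally for locally Noetherian $X$ via the trace-form argument, since $K(V)/K(X)$ is separable.
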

\begin{proof}
    Given a fixed nontrivial $G$-torsor $p\colon P\to X^{\circ}$, any connected component of it will be a finite étale cover of $X^{\circ}$. Fix a connected component $T$. Since $P$ is trivial after pullback by itself, it will also be trivial after pullback to $T$. Therefore, $P_{T}\cong G_{T}$ extends trivially to the normalization of $X$ in $K(T)$, proving the claim.
\end{proof}

In particular, since every finite group scheme is étale in characteristic $0$, we have the following obvious corollary. 

\begin{corollary}
    If $F$ is a field of characteristic $0$, then every torsor for any finite group scheme étale extends.
\end{corollary}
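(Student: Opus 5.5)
The corollary follows directly from Proposition \ref{finite etale extension}, so the plan is simply to verify that its hypotheses hold. Let $F$ have characteristic $0$ and let $G/F$ be a finite group scheme. Fix a normal integral $F$-scheme $X$, a dense open $X^{\circ}\subseteq X$ and a $G$-torsor $P\to X^{\circ}$. The only thing to check is that $G$ is étale over $F$; once that is known, Proposition \ref{finite etale extension} says that $P$ étale extends to $X$.

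\textbf{Step 1: $G$ is étale.} This is Cartier's theorem: every group scheme locally of finite type over a field of characteristic $0$ is smooth. For completeness, here is the argument I would give.
\begin{itemize}
\item Since $G$ is finite, $G=\Spec A$ with $A$ a finite-dimensional $F$-algebra. Étaleness can be checked after base change to $\overline{F}$, so we may assume $F$ is algebraically closed.
\item Translations act transitively on the $\overline{F}$-points, so it suffices to show that the local ring at the identity is reduced, i.e. that the identity component $G^{\circ}$ is trivial.
\item $G^{\circ}$ is infinitesimal, so its coordinate ring is local Artinian. In characteristic $0$ it has no nonzero nilpotent primitive elements, because a primitive element $x$ satisfies $\Delta(x^{n})=(x\otimes 1+1\otimes x)^{n}$, and in characteristic $0$ this forces $x^{n}\neq 0$ whenever $x\neq 0$.
\item It follows that $\mathrm{Lie}(G^{\circ})=0$. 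Hence $G^{\circ}$ is étale, so $G^{\circ}$ is trivial and $G$ is étale.
\end{itemize}

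\textbf{Step 2: apply the proposition.} With $G$ finite étale, Proposition \ref{finite etale extension} applies verbatim and gives the étale extension of $P$.

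There is no real obstacle. The only substantive input is Cartier's theorem, which I would cite rather than reprove.
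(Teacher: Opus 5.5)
Your argument is the paper's: every finite group scheme over a field of characteristic $0$ is étale (Cartier's theorem), so Proposition~\ref{finite etale extension} applies directly. Since you say you would cite Cartier's theorem, the proof is complete.

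One caution about the optional sketch of Cartier's theorem. The step ``It follows that $\mathrm{Lie}(G^{\circ})=0$'' does not follow from what precedes it. Primitive elements of the coordinate ring $A=\mathcal{O}(G^{\circ})$ are the homomorphisms $G^{\circ}\to\mathbb{G}_{a}$, not tangent vectors. In fact $\mathrm{Lie}(G^{\circ})=(\mathfrak{m}/\mathfrak{m}^{2})^{\vee}$, and elements of $\mathfrak{m}$ are generally not primitive in $A$. For instance, in characteristic $p$ the ring $\mathcal{O}(\mu_{p})$ has no nonzero primitive elements, yet $\mathrm{Lie}(\mu_{p})\neq 0$, so the inference is not valid in general.

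The repair is to run your binomial argument in the associated graded bialgebra $\mathrm{gr}_{\mathfrak{m}}A=\bigoplus_{n}\mathfrak{m}^{n}/\mathfrak{m}^{n+1}$, which is well defined because $\Delta(\mathfrak{m}^{n})\subseteq\sum_{i+j=n}\mathfrak{m}^{i}\otimes\mathfrak{m}^{j}$. For $x\in\mathfrak{m}$, the counit axioms give $\Delta(x)-x\otimes 1-1\otimes x\in\mathfrak{m}\otimes\mathfrak{m}$, so the image $\bar{x}\in\mathfrak{m}/\mathfrak{m}^{2}$ is primitive in $\mathrm{gr}_{\mathfrak{m}}A$. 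It is also nilpotent there, since $\mathfrak{m}^{n}=0$ for $n\gg 0$. The powers $\bar{x}^{i}$ lie in distinct degrees, so your computation of $\Delta(\bar{x}^{n})$ works verbatim. Characteristic $0$ then forces $\bar{x}=0$, hence $\mathfrak{m}=\mathfrak{m}^{2}$, and therefore $\mathfrak{m}=0$ by nilpotence.
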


We now show how to reduce problems of étale extensions to connected groups.

\begin{proposition}\label{reduction to connected}
    Let $G/F$ be an algebraic group, $X/F$ a normal integral variety over $F$ and $X^{\circ}\subseteq X$ an open subscheme. Let $P\to X^{\circ}$ be a $G$-torsor and $P_{0}\to X^{\circ}$ the $\pi_{0}(G)$-torsor induced by $G\twoheadrightarrow \pi_{0}(G)$. Suppose that one of the following conditions hold:
    \begin{enumerate}
        \item The cohomology class of $P_{0}\to X^{\circ}$ is trivial in $H^{1}(X^{\circ},\pi_{0}(G))$ and every $G^{\circ}$-torsor on $X^{\circ}$ étale extends to $X$.
        \item There exists a connected component $T$ of $P_{0}$ such that every $G^{\circ}$-torsor on $T$ étale extends to $Z$, where $Z$ is the normalization of $X$ in $K(T)$.
    \end{enumerate}
     Then $P\to X^{\circ}$ étale extends to $X$.
\end{proposition}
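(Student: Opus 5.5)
In both cases the idea is to reduce $P$ to a $G^{\circ}$-torsor after an étale base change that kills the class of $P_{0}$. We then apply the hypothesis on $G^{\circ}$-torsors and compose the resulting finite covers. The basic tool is the exact sequence $1\to G^{\circ}\to G\to\pi_{0}(G)\to 1$ together with the following standard fact: a $G$-torsor whose induced $\pi_{0}(G)$-torsor is trivial comes from a $G^{\circ}$-torsor, i.e. is of the form $Q\times^{G^{\circ}}G$. To see this, a trivialization $s$ of $P_{0}=P/G^{\circ}$ gives the subsheaf $Q\defeq P\times_{P_{0}}X^{\circ}$ (pullback along $s$), which is a $G^{\circ}$-torsor. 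Moreover $Q\times^{G^{\circ}}G\to P$ is an isomorphism, as can be checked fppf locally.

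\textbf{Case (1).} Since $[P_{0}]$ is trivial, $P\cong Q\times^{G^{\circ}}G$ for some $G^{\circ}$-torsor $Q$ on $X^{\circ}$. By hypothesis $Q$ étale extends: there is a connected normal $Y$ and a finite surjective $f\colon Y\to X$, étale over $X^{\circ}$, together with a $G^{\circ}_{Y}$-torsor $Q_{Y}$ extending $f^{\ast}Q$. Then $Q_{Y}\times^{G^{\circ}}G$ is a $G_{Y}$-torsor. Because induction commutes with base change, its restriction to $f^{-1}(X^{\circ})$ is $f^{\ast}P$. This is exactly an étale extension of $P$ in the sense of Definition~\ref{etale extension def}.

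\textbf{Case (2).} Let $T$ be the given connected component of $P_{0}$. As in the proof of Proposition~\ref{finite etale extension}, $g\colon T\to X^{\circ}$ is finite étale and $g^{\ast}P_{0}$ is trivial. The pullback $g^{\ast}P$ is a $G$-torsor on $T$ whose induced $\pi_{0}(G)$-torsor $g^{\ast}P_{0}$ is trivial, so by the fact above $g^{\ast}P\cong Q\times^{G^{\circ}}G$ with $Q$ a $G^{\circ}$-torsor on $T$.

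Let $Z$ be the normalization of $X$ in $K(T)$. Then $Z$ is normal and connected, $Z\to X$ is finite (by the finiteness of normalization for varieties cited in the remark after Definition~\ref{etale extension def}) and surjective, and the preimage of $X^{\circ}$ in $Z$ is $T$, as in that remark. By hypothesis there is a connected normal $Y$ and a finite surjective $h\colon Y\to Z$, étale over $T$, with $Q$ extending to a $G^{\circ}_{Y}$-torsor $Q_{Y}$. Take the composite $f\colon Y\to Z\to X$. It is finite and surjective. It is étale over $X^{\circ}$ because $f^{-1}(X^{\circ})=h^{-1}(T)$, and $h^{-1}(T)\to T\to X^{\circ}$ is a composite of étale maps. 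Finally, $Q_{Y}\times^{G^{\circ}}G$ restricts on $f^{-1}(X^{\circ})$ to $h^{\ast}g^{\ast}P=f^{\ast}P$, so it is an étale extension of $P$.

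The only step that needs genuine checking is the reduction-of-structure-group fact. It requires verifying that $Q$ is a $G^{\circ}$-torsor: $G^{\circ}$ acts simply transitively on the fibers of $P\to P_{0}$, and $P\to P_{0}$ is fppf locally of the form $G\to\pi_{0}(G)$. It also requires checking that $Q\times^{G^{\circ}}G\to P$ is an isomorphism of $G$-torsors, which holds since it is a $G$-equivariant map of $G$-torsors. Everything else is bookkeeping about compositions of finite and étale maps and the identification $f^{-1}(X^{\circ})=h^{-1}(T)$.
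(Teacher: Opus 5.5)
Your proposal is correct and follows essentially the paper's argument: you reduce the structure group to $G^{\circ}$ via $1\to G^{\circ}\to G\to\pi_{0}(G)\to 1$ (after first pulling back to $T$ in case (2)), extend using the hypothesis, and induce back to $G$ along the composite cover. The only cosmetic difference is that you compose the finite maps $Y\to Z\to X$ directly from Definition~\ref{etale extension def}, whereas the paper works with a connected étale cover $V\to T$ and uses Stacks Tag 035I to identify the normalizations of $X$ and of $Z$ in $K(V)$; your version also handles (2) without the paper's superfluous assumption that $[P_{0}]$ is nontrivial.
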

\begin{proof}
    If the class of $P_{0}\to X^{\circ}$ in $H^{1}(X^{\circ},\pi_{0}(G))$ is trivial, by the exact sequence in (non-comutative) cohomology associated to
    \[
    1\to G^{\circ}\to G\to \pi_{0}(G)\to 1\;,
    \]
    we have that $P\to X^{\circ}$ comes from $G^{\circ}$. Therefore, it is clear that $P\to X^{\circ}$ étale extends to $X$. 
    
    If the class of $\varphi\colon P_{0}\to X^{\circ}$ in $H^{1}(X^{\circ},\pi_{0}(G))$ is not trivial and the second hypothesis holds, we can pull back $P\to X^{\circ}$ by the connected étale cover $\restr{\varphi}{T}\colon T\to X^{\circ}$ to obtain a $G$-torsor $\restr{\varphi}{T}^{\ast}P\to T$ such that its induced $\pi_{0}(G)$-torsor has a trivial class in $H^{1}(T,\pi_{0}(G))$. As before, this implies that $\restr{\varphi}{T}^{\ast}P\to T$ comes from a $G^{\circ}$-torsor $Q\to T$ and so by hypothesis we can find a connected étale cover $\psi\colon V\to T$ such that $\psi^{\ast}Q\to V$ extends to the normalization of $Z$ in $K(V)$.
    If we prove that the normalization $Y$ of $X$ in $K(V)$ is the same as the normalization of $Z$ in $K(V)$, then we are done by taking the composition $\psi\circ \restr{\varphi}{T}V\to X$ as the connected étale cover. This follows from \cite[\href{https://stacks.math.columbia.edu/tag/035I}{Tag 035I}]{stacks-project} applied to the diagram
    \[
    \begin{tikzcd}
        \Spec K(V)\ar[r]\ar[d] & Z\ar[d]
        \\
        Y\ar[r] & X\;,
    \end{tikzcd}
    \]
    which gives a map $Y\to Z$ realizing $Y$ as the normalization of $Z$ in $K(V)$.
\end{proof}

Before finishing this section, let me observe that given two étale extendable torsors, we can always find a finite morphism where both extend at the same time.

\begin{lemma}\label{recobrimento comum}
    Let $G/F$ be an algebraic group, $X/F$ a normal integral variety over $F$ and $X^{\circ}\subseteq X$ an open subscheme. Let $P_{i}\to X^{\circ}$ $(i=1,2)$ be étale extendable $G$-torsors. Then, there exists a connected normal variety $Y/F$ and a surjective finite morphism $Y\to X$ étale over $X^{\circ}$ such that for $i=1,2$, $P_{i}\to X$ étale extends to $X$ after passing to $Y$.
\end{lemma}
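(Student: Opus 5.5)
The plan is to dominate the two given covers by a single one. Let $f_{i}\colon Y_{i}\to X$ ($i=1,2$) be connected normal schemes, finite surjective and étale over $X^{\circ}$, carrying étale extensions $P_{i,Y_{i}}\to Y_{i}$ of $P_{i}$, as in Definition \ref{etale extension def}. Set $V_{i}\defeq f_{i}^{-1}(X^{\circ})$; each $V_{i}\to X^{\circ}$ is a connected finite étale cover. The fibre product $V_{1}\times_{X^{\circ}}V_{2}\to X^{\circ}$ is again finite étale. I choose a connected component $W$ of it; $W$ is then a connected finite étale cover of $X^{\circ}$, hence open and closed in the fibre product. Since $X^{\circ}$ is normal and $W$ is étale over it, $W$ is normal, and being connected it is integral. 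Let $Y$ be the normalization of $X$ in $K(W)$. This is a finite map by \cite[Corollary 12.52]{Gortz2020-zv}, since $X$ is a normal variety. It is surjective, because it is finite and dominant. Its preimage of $X^{\circ}$ is $W$, as in the Remark after Definition \ref{etale extension def}, so $Y\to X$ is étale over $X^{\circ}$. Finally $Y$ is a connected normal variety.

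Next I produce maps $g_{i}\colon Y\to Y_{i}$ over $X$. The projections give morphisms $\Spec K(W)\to W\to V_{i}\subseteq Y_{i}$, which are compatible with $Y\to X$. By \cite[\href{https://stacks.math.columbia.edu/tag/035I}{Tag 035I}]{stacks-project}, applied exactly as in the proof of Proposition \ref{reduction to connected}, these yield $g_{i}\colon Y\to Y_{i}$ identifying $Y$ with the normalization of $Y_{i}$ in $K(W)$. Over $X^{\circ}$, the map $g_{i}$ restricts to the projection $W\to V_{i}$.

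Then $g_{i}^{\ast}P_{i,Y_{i}}$ is a $G_{Y}$-torsor on $Y$. On $W=g_{i}^{-1}(V_{i})$ it is isomorphic to $g_{i}^{\ast}f_{i}^{\ast}P_{i}=(f_{i}g_{i})^{\ast}P_{i}$, and this is the pullback of $P_{i}$ along $W\to X^{\circ}$. So both torsors étale extend over the same $Y$.

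The only point needing care is the existence of $g_{i}$ with $g_{i}^{-1}(V_{i})=W$. This comes from the universal property of normalization together with the fact that $f\colon Y\to X$ has $f^{-1}(X^{\circ})=W$. Given that, $g_{i}^{-1}(V_{i})=g_{i}^{-1}f_{i}^{-1}(X^{\circ})=f^{-1}(X^{\circ})=W$, so the argument holds together.
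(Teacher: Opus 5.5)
Your proof is correct and is essentially the paper's argument: the paper takes a connected component of the normalized fibre product $(Y_{1}\times_{X}Y_{2})_{\text{norm}}$, and your $Y$, the normalization of $X$ in $K(W)$, is exactly such a component, namely the one obtained from the irreducible component $\overline{W}$. What you add is making explicit two things the paper leaves implicit: the choice of a component dominating $X$, and the maps $g_{i}\colon Y\to Y_{i}$ along which the extensions $P_{i,Y_{i}}$ are pulled back.
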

\begin{proof}
    Indeed, if $Y_{i}\to X$ are finite morphisms, étale over $X^{\circ}$ in which $P_{i}\to X$ extends, it is enough to take some connected component of the normalized fiber product $(Y_{1}\times_{X} Y_{2})_{\text{norm}}\to X$.
\end{proof}

\subsection{Ramification of Artin--Schreier extensions}\label{1.3}
Let $K$ be a field of characteristic $p>0$ equipped with a normalized discrete valuation $v$ and perfect residue field $k$. In this section, for the convenience of the reader, we briefly go through the ramification behavior of Artin--Schreier extensions $K_{\wp,f}$ of $K$. The contents of this should be well-known and can also be found in \cite[Propositions 3.7.7 and 3.7.8]{Stichtenoth2008-ul}. To be more precise with the setup, fix a non-trivial Artin--Schreier extension $L\defeq K_{\wp,f}$ and a discrete valuation $v_{L}$ extending $v$. Since $L$ is Galois of order $p$, we have the equality
\[
p=g\cdot e(v_{L}|v)\cdot f(v_{L}|v),
\]
where $g$ is the number of distinct discrete valuations in $L$ extending $v$. In particular, only one among $g$, $e(v_{L}|v)$ and $f(v_{L}|v)$ is equal to $p$ while the rest are equal to $1$. The valuation of $f$ will determine in which case we are.

\begin{remark}
It is not hard to see that $K_{\wp, f+\wp(g)}\cong K_{\wp, f}$ as extensions of $K$, and therefore, Artin--Schreier extensions $K_{\wp, f}/K$ only depend (up to isomorphism) on the class of $f$ in the additive group $K/\wp(K)$.
\end{remark}

\begin{lemma}
For any $f \in K$ there exists $g \in K$ such that $v(f+\wp(g)) \geq 0$ or  $v(f+\wp(g))<0$ and $p\nmid v(f+\wp(g))$. In particular, any Artin--Schreier extension is of the form $K_{\wp, f}$ with either $v(f)\geq 0$ or $v(f)<0$ and prime to $p$.
\end{lemma}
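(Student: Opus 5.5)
We argue by strong induction on $-v(f)$, using repeatedly the Remark that $K_{\wp,f}$ depends only on the class of $f$ in $K/\wp(K)$. We may modify $f$ by adding elements $\wp(g)$, and our aim is to do so finitely many times until the valuation is either nonnegative or negative and prime to $p$.

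Fix a uniformizer $\pi$ of $\mathcal{O}_{K}$. If $v(f)\geq 0$, or $v(f)<0$ with $p\nmid v(f)$, we take $g=0$. Otherwise $v(f)=-pm$ for some integer $m>0$. Write $f=u\pi^{-pm}$ with $u\in\mathcal{O}_{K}^{\times}$, and let $\bar u\in k$ be its residue. Since $k$ is perfect, there is $c\in k$ with $c^{p}=\bar u$. Choose any lift $\tilde c\in\mathcal{O}_{K}$ of $c$ and set $g\defeq -\tilde c\,\pi^{-m}$. Then
\[
\wp(g)=-\tilde c^{p}\pi^{-pm}+\tilde c\,\pi^{-m},
\]
so that
\[
f+\wp(g)=(u-\tilde c^{p})\pi^{-pm}+\tilde c\,\pi^{-m}.
\]
Because $u-\tilde c^{p}$ reduces to $\bar u-c^{p}=0$, we have $v\bigl((u-\tilde c^{p})\pi^{-pm}\bigr)\geq -pm+1$. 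Moreover $v(\tilde c\pi^{-m})\geq -m>-pm$, since $m>0$ and $p\geq 2$. Hence $v(f+\wp(g))>v(f)$.

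Now replace $f$ by $f+\wp(g)$ and repeat. Each step strictly increases the valuation, and the valuation is an integer bounded above by $0$ for as long as we are in the bad case. So after finitely many steps we reach either a nonnegative valuation or a negative valuation prime to $p$. The sum of the successive $g$'s gives the required $g$, since $\wp$ is additive. The ``in particular'' statement then follows immediately from the Remark.

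The only real point is the perfectness of $k$, which allows us to take $p$-th roots of residues. This is exactly where the hypothesis on the residue field is used.
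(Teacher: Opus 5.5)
Your proof is correct and is essentially the paper's argument: write $f=u\pi^{-pm}$, use perfectness of $k$ to take a $p$-th root of $\bar u$, add the Artin--Schreier image of (minus) its lift times $\pi^{-m}$ to strictly raise the valuation, and iterate. The only cosmetic differences are that you take $g=-\tilde c\,\pi^{-m}$ so as to match the sign $f+\wp(g)$ in the statement (the paper works with $f-\wp(g)$), and that you explicitly combine the successive corrections into a single $g$ using additivity of $\wp$.
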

\begin{proof}
Fix $f\in K$. Suppose that $v(f)=-mp$ for some $m\in \mathbb{N}$ (so that $f$ is not already of the desired form). Let $t\in K$ be a uniformizer and write
\[
  f=ut^{-mp}
\]
for some $u \in \mathcal{O}_{K}^\times$. Let $\overline{u} \in k^\times$ be its residue. Since $k$ is perfect, there exists $\overline{c} \in k^\times$ with $\overline{c}^p = \overline{u}$. Now, let $c \in \mathcal{O}_{K}^\times$ be a lift of $\overline{c}$ and $g\defeq ct^{-m}$. We then have that
\[
  f-\wp(g)=ut^{-mp}-c^{p}t^{-mp}+ct^{-m}=(u-c^{p})t^{-mp}+ct^{-m}.
\]
Since $\overline{(u-c^{p})}=0\in k$, we have that $v(u-c^{p})>0$ and therefore,
\[
  v(f-\wp(g))>-mp=v(f).
\]
Let $f_{1}=f-\wp(g)$. Then $K_{\wp, f_{1}}\cong K_{\wp, f}$ and $v(f_1) > v(f)$. If either $v(f_{1})<0$ and $p\nmid v(f_{1})$ or $v(f_{1})\geq 0$ we are done. Otherwise we can continue this process to eventually (in a finite number of steps) reach some $g$ such that $K_{\wp, g}\cong K_{\wp, f}$ satisfying either $v(g)<0$ and $p\nmid v(g)$ and $v(g)\geq 0$.
\end{proof}

\begin{lemma}\label{AS split}
Suppose that $v(f)>0$ and $f\not\in\wp(K)$. Let $v_{L}$ be a valuation on $L\defeq K_{\wp, f}$ extending $v$. Then $e(v_{L}|v)=f(v_{L}|v)=1$ and $g=p$. In particular, $L/K$ is unramified at $v$.
\end{lemma}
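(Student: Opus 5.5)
The plan is to show that $\wp(T)-f$ splits into $p$ distinct linear factors over the completion $\widehat{K}$ of $K$ at $v$. From this splitting, the statement about $e$, $f$ and $g$ follows formally.

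First I would produce a root. Since $v(f)>0$, consider the sequence
\[
x_n \defeq -(f+f^{p}+\dots+f^{p^{n}}).
\]
We have $\wp(x_n)=x_n^p-x_n=f-f^{p^{n+1}}$, and $v(f^{p^{n+1}})=p^{n+1}v(f)\to\infty$. The series $x\defeq -\sum_{i\geq 0} f^{p^i}$ therefore converges in $\widehat{K}$ and satisfies $\wp(x)=f$. Equivalently, I could invoke Hensel's lemma for $\wp(T)-f$ over $\mathcal{O}_{\widehat K}$: its reduction is $T^p-T=\prod_{c\in\mathbb{F}_p}(T-c)$, which is separable because its derivative is $-1$. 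Then $x+c$ for $c\in\mathbb{F}_p$ gives all $p$ roots, so $\wp(T)-f$ splits completely in $\widehat{K}[T]$ with roots in $\mathcal{O}_{\widehat K}$.

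Next I would translate this into the decomposition of $v$ in $L$. By the standard correspondence, the valuations of $L=K[T]/(\wp(T)-f)$ extending $v$ correspond to the irreducible factors of $\wp(T)-f$ over $\widehat{K}$. For each such valuation, the completion $\widehat{L}_{v_L}$ is $\widehat{K}[T]/(\text{factor})$, and the local degree satisfies $[\widehat L_{v_L}:\widehat K]=e(v_L|v)f(v_L|v)$ because $v$ is discrete; if one worries about defectlessness, separability of $L/K$ together with completeness handles it. Here $\wp(T)-f$ is irreducible over $K$ because $f\notin\wp(K)$, so $L$ is a field of degree $p$. The polynomial has $p$ linear factors over $\widehat{K}$, so there are $p$ extensions of $v$, each with local degree $1$. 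Hence $e(v_L|v)=f(v_L|v)=1$ and $g=p$. This is also consistent with the equality $p=g\cdot e\cdot f$ recalled above.

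The main obstacle is only the bookkeeping identifying extensions of $v$ with factors over the completion. There is also a more elementary alternative that avoids completions. Take a root $y\in L$ of $\wp(T)-f$. Since $v_L(f)>0$, the relation $y^p-y=f$ forces $v_L(y)\geq 0$, because a negative valuation would give $v_L(y^p-y)=p\,v_L(y)<0$. So $y$ is integral, and it reduces modulo the maximal ideal of $v_L$ to a root of $\overline{T}^p-\overline{T}$ in the residue field, that is, to an element of $\mathbb{F}_p$. The $p$ Galois conjugates $y+c$ reduce to distinct elements of $\mathbb{F}_p$. Two of them cannot both lie in $\mathfrak m_{v_L}$, so the Galois group, which permutes the extensions of $v$, acts with trivial decomposition group. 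Therefore $g=p$. I would present the Hensel argument as the main proof and use this reduction argument as a check.
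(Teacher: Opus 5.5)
Your proposal is correct and follows essentially the paper's route: the paper passes to the henselizations and uses Hensel's lemma to get $f\in\wp(K^{h})$, so the local degree is $1$ and $e(v_L|v)=f(v_L|v)=1$, with $g=p$ then forced by $p=g\cdot e\cdot f$; you instead pass to the completion $\widehat{K}$, exhibit the root $-\sum_{i\geq 0}f^{p^{i}}$, and read off the $p$ linear factors, hence $p$ extensions of local degree $1$. Your supplementary decomposition-group argument is also valid and avoids completions entirely. If some $\sigma\neq 1$ fixed $v_L$, then with $\sigma(y)=y+c$, $c\neq 0$, both $y-\overline{y}$ and $\sigma(y)-\overline{y}=y-\overline{y}+c$ would lie in $\mathfrak{m}_{v_L}$, which would force the unit $c$ into $\mathfrak{m}_{v_L}$.
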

\begin{proof}
Since passing to the henselizations of $K$ and $L$ does not change $e(v_{L}|v)$ and $f(v_{L}|v)$, we can suppose that $K$ and $L$ are henselian. In this case, since $v(f)>0$, we have that $\overline{f}=0\in \wp(k)$. By Hensel's lemma, we have that $f\in \wp(K)$ and therefore, $L=K$. We conclude that $e(v_{L}|v)=f(v_{L}|v)=1$.
\end{proof}

\begin{lemma}\label{AS unramified}
Suppose that $v(f)=0$ and $f\not\in\wp(K)$. Let $v_{L}$ be the (unique) valuation on $L=K_{\wp, f}$ extending $v$. Then we have that $e(v_{L}|v)=g=1$ and $f(v_{L}|v)=p$. In particular, $L/K$ is unramified at $v$.
\end{lemma}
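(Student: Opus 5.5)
Throughout, take $v$ normalized with uniformizer $t$, and write $\alpha\in\mathcal{O}_{K}$ for a root of $\wp(T)-f$ in $L$, i.e. $\alpha^{p}-\alpha=f$. The plan is to mirror the proof of Lemma \ref{AS split}: first pass to the henselization, then read off the residue extension directly from the residue of $f$.

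The first step is to identify the residue field. Since $v(f)=0$, the residue $\overline{f}\in k$ is nonzero. If $\overline{f}\in\wp(k)$, then $\wp(T)-\overline{f}$ has a simple root in $k$, because its derivative is $-1\neq 0$. Hensel's lemma over the henselization $K^{h}$ then lifts this root, so $f\in\wp(K^{h})$. The root $\alpha$ is then in $K^{h}$, and $L\otimes_{K}K^{h}$ splits as $K^{h}$ to the power $p$. In that case $g=p$ and $e=f=1$, which contradicts the claimed statement.

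So the hypothesis really needed is $\overline{f}\notin\wp(k)$, and I will check whether "$f\notin\wp(K)$" is meant as a local (henselian) condition, as the analogous reduction in Lemma \ref{AS split} suggests. If the statement is read literally over a non-henselian $K$, the claimed conclusion can fail. For example, take $f=\overline{c}$ a constant with $\overline{c}\in\wp(k)$ but $f\notin\wp(K)$; then $L/K$ splits at $v$. I would therefore prove the statement under the interpretation $\overline{f}\notin\wp(k)$. Equivalently, we can work after replacing $K$ by its henselization, where $f\notin\wp(K^{h})$ forces $\overline{f}\notin\wp(k)$ by the Hensel argument above.

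Under that hypothesis, $\overline{\alpha}$ satisfies $\overline{\alpha}^{p}-\overline{\alpha}=\overline{f}$. The polynomial $\wp(T)-\overline{f}$ is irreducible over $k$: an Artin--Schreier polynomial with no root in $k$ is irreducible. Hence $k(\overline{\alpha})/k$ has degree $p$, which gives $f(v_{L}|v)\geq p$ for every extension $v_{L}$. The fundamental equality $p=g\cdot e\cdot f$ then forces $f(v_{L}|v)=p$ and $e=g=1$, and in particular the extension of $v$ is unique. Moreover, $k(\overline{\alpha})/k$ is separable, being Artin--Schreier, so $L/K$ is unramified at $v$.

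The main obstacle is not any computation but getting the hypothesis right. The conclusion genuinely requires the residue condition $\overline{f}\notin\wp(k)$, or equivalently working in the henselian setting as in the previous lemma. I would state this explicitly in the proof, and reduce to it by passing to the henselization as in Lemma \ref{AS split}.
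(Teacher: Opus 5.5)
Your proposal is correct. For the corrected statement it is the paper's own argument. A root of $\wp(T)-f$ is integral over $\mathcal{O}_{K}$, so it lies in $\mathcal{O}_{L}$ (you wrote $\mathcal{O}_{K}$). Its residue is a root of $\wp(T)-\overline{f}$ in $l$, which gives $f(v_{L}|v)\geq p$, and $p=g\cdot e(v_{L}|v)\cdot f(v_{L}|v)$ finishes.

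Your diagnosis of the hypothesis is also right, and it exposes a real gap in the paper's proof. The step ``$k_{\overline{f}}\subseteq l$, hence $[l\colon k]\geq p$'' silently assumes $\overline{f}\notin\wp(k)$. When $\overline{f}\in\wp(k)$, Hensel's lemma gives $f\in\wp(K^{h})$, and $v$ splits completely in $L$. So the lemma as printed is false. The correct hypothesis is $\overline{f}\notin\wp(k)$, equivalently $f\notin\wp(K^{h})$.

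Your constant counterexample should be made concrete, because it only works when the constants do not map onto $k$. One valid instance is $K=\mathbb{F}_{p}(t)$, with $v$ the place of $t^{p}-t-1$ (residue field $\mathbb{F}_{p^{p}}$) and $f=1$; then $L=\mathbb{F}_{p^{p}}(t)$ and $g=p$.

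In the setting where the paper actually uses the lemma ($K=F(X)$ with $F$ algebraically closed, so $k=F$), a constant $c$ with $\overline{c}\in\wp(k)$ already lies in $\wp(K)$. There you need a non-constant $f$, for instance $f=1/(t-1)\in F(t)$ at $v=v_{t}$:
\begin{itemize}
\item $v(f)=0$;
\item $f\notin\wp(K)$, because of the simple pole at $t=1$;
\item $f(v_{L}|v)=1$, because $k=F$ is algebraically closed.
\end{itemize}
Those later applications only need ``unramified at $v$''. That holds whenever $v(f)\geq 0$, since $\mathcal{O}_{K}[T]/(T^{p}-T-f)$ is étale over $\mathcal{O}_{K}$.
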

\begin{proof}
Let $x\in L$ such that $x^p-x=f$. Since $v(f)=0$, an easy calculation shows that $v_{L}(x)=0$. Therefore, we can reduce the preceding equation to the residue field $l$ of $v_{L}$ to conclude that the polynomial $\wp(T)-\overline{f}\in l[T]$ is completely reducible. Therefore, $k_{\overline{f}}\subseteq l$ and $f(v_{L}|v)=[l\colon k]\geq p$. Since $p=g\cdot f(v_{L}|v)\cdot e(v_{L}|v)$, we conclude that $f(v_{L}|v)=p$ and $g=e(v_{L}|v)=1$.
\end{proof}

\begin{lemma}\label{AS totally-ramified}
Suppose that $v(f) < 0$ and $p \nmid v(f)$. Let $v_{L}$ be the (unique) valuation on $L=K_{\wp, f}$ extending $v$. Then we have that $e(v_{L}| v)=p$ and $f(v_{L}| v)=g=1$. In particular, $L/K$ is totally and wildly ramified of degree $p$ at $v$.
\end{lemma}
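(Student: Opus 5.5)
The plan is to compute $v_{L}$ of a root $x$ of $\wp(T)-f$ directly and read off the ramification index. I would combine this with the fundamental identity $p=g\cdot e(v_{L}|v)\cdot f(v_{L}|v)$ recalled above.

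First I will check that $L/K$ is a genuine degree $p$ extension, i.e.\ that $f\notin\wp(K)$. Suppose $f=\wp(y)$ with $y\in K$. If $v(y)\geq 0$, then $v(\wp(y))\geq 0$, contradicting $v(f)<0$. If $v(y)<0$, then $v(y^{p})=p\,v(y)<v(y)$, so $v(\wp(y))=p\,v(y)$ is divisible by $p$, contradicting $p\nmid v(f)$. Hence $\wp(T)-f$ has no root in $K$. Since the roots of an Artin--Schreier polynomial are $x+i$ for $i\in\mathbb{F}_{p}$, it is then irreducible and $[L:K]=p$.

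Next, fix a valuation $v_{L}$ on $L$ extending $v$, normalized so that $v_{L}|_{K}=e\cdot v$ with $e=e(v_{L}|v)$. Let $x\in L$ satisfy $x^{p}-x=f$. If $v_{L}(x)\geq 0$ we would get $v_{L}(f)\geq 0$, which is impossible. So $v_{L}(x)<0$, hence $v_{L}(x^{p})=p\,v_{L}(x)<v_{L}(x)$, and therefore $p\,v_{L}(x)=v_{L}(f)=e\,v(f)$. Since $p\nmid v(f)$ and $p$ is prime, this forces $p\mid e$. Combined with $e\leq p$ from the fundamental identity, we get $e=p$, and then $f(v_{L}|v)=g=1$. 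In particular $v_{L}$ is the unique extension, which justifies the word ``unique'' in the statement.

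The one point to be careful about is that the fundamental identity $p=g\,e\,f$ holds here. This needs $\mathcal{O}_{L}$ to be finite over $\mathcal{O}_{K}$, which is automatic because $L/K$ is separable; alternatively one can pass to henselizations as in Lemma \ref{AS split}. Since $e\geq p$ already follows from the valuation computation and $e\,f\leq p$ always holds, the inequality version suffices anyway. Finally, total ramification of degree $p=\operatorname{char}$ is wild by definition, which gives the last assertion.
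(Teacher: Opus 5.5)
Your proof is correct and follows essentially the same route as the paper: show $v_{L}(x)<0$, deduce $p\,v_{L}(x)=v_{L}(f)$ so that $p$ must divide the ramification index, and conclude with $p=g\cdot e(v_{L}|v)\cdot f(v_{L}|v)$. Your additional checks are also correct and make the argument a little more self-contained: that $f\notin\wp(K)$ (which the paper only records in the remark after the lemma), and that the fundamental identity, or just the inequality $\sum e_{i}f_{i}\leq p$, applies here.
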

\begin{proof}
Let us start by calculating $v_{L}(x)$. If $v_{L}(x)\geq 0$, we would have that
\[
v(f)=v_{L}(f)=v_{L}(x^{p}-x)\geq\text{min}\{pv_{L}(x),v_{L}(x)\}\geq 0\;.
\]
This is a contradiction since $v(f)<0$. Therefore $v_{L}(x)<0$ and we have that
\[
v(f)=v_{L}(f)=v_{L}(x^{p}-x)=\text{min}\{pv_{L}(x),v_{L}(x)\}=pv_{L}(x)\;,
\]
i.e. $v_{L}(x)=v(f)/p$. We conclude that $v_{L}(L)$ contains a subgroup of index at least $p$ and so, $e(v_{L}|v)\geq p$. Since $p=g\cdot f(v_{L}|v)\cdot e(v_{L}|v)$, we must have that $g=f(v_{L}|v)=1$ and $e(v_{L}|v)=p$.
\end{proof}

\begin{remark}
Let me observe that if $v(f)<0$ is prime to $p$, then $f\not\in \wp(K)$. Indeed, if $f=x^{p}-x$, a calculation shows that $v(f)=pv(x)$, i.e. it is divisible by $p$.
\end{remark}

\section{Étale extending $\alpha_{p}$-torsors over DVRs}\label{2}
In this section, we will be interested in understanding when $\alpha_{p}$-torsors on the generic point of a discrete valuation ring $\mathcal{O}_{K}$ étale extends to all of $\Spec \mathcal{O}_{K}$. The main theorem of this section is the following.

\begin{theorem}\label{teorema principal}
    Let $\mathcal{O}_{K}$ be a DVR of positive characteristic $p>0$ with fraction field $K$ and perfect residue field. Then every $\alpha_{p}$-torsor on $\Spec K$ étale extends to $\Spec \mathcal{O}_{K}$.
\end{theorem}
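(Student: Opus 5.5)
The plan is to make the problem entirely explicit through the Kummer-type description of $\alpha_{p}$-torsors and then kill the polar part of a representative with a single, well-chosen Artin--Schreier extension. From the exact sequence $0\to\alpha_{p}\to\mathbb{G}_{a}\xrightarrow{F}\mathbb{G}_{a}\to 0$, together with the vanishing of $H^{1}$ of $\mathbb{G}_{a}$ on affine schemes, we get for any ring $R$ of characteristic $p$ an identification $H^{1}(R,\alpha_{p})\cong R/R^{p}$. This identification is compatible with base change, and under it the class of $f$ corresponds to the torsor $\Spec R[T]/(T^{p}-f)$.

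Hence an $\alpha_{p}$-torsor on $\Spec K$ is given by some $f\in K$. For a finite separable extension $L/K$ with normalization $\mathcal{O}_{L}$, the base change $P_{L}$ extends to $\Spec\mathcal{O}_{L}$ as soon as $f\in\mathcal{O}_{L}+L^{p}$. In that case, writing $f=b+a^{p}$ with $b\in\mathcal{O}_{L}$, the torsor $\Spec\mathcal{O}_{L}[T]/(T^{p}-b)$ restricts to $P_{L}$. Note that $\mathcal{O}_{L}$ is a DVR here because the extensions below are totally ramified, so Definition \ref{etale extension def} is satisfied. If $v(f)\geq 0$ there is nothing to do, so assume $v(f)<0$.

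The key step is the following twisted Artin--Schreier trick. I will choose $s\in K^{\times}$ with $v(s)>0$ large and set $g\defeq f/s^{p}$. Then $v(g)=v(f)-pv(s)<0$, and I want this to be prime to $p$. For that I first reduce to the case $p\nmid v(f)$: modify $f$ by elements of $K^{p}$ and $\mathcal{O}_{K}$, using the perfectness of the residue field to cancel leading terms $ut^{-mp}$ against $c^{p}t^{-mp}$, exactly as in the lemma preceding Lemma \ref{AS split}. After this reduction, $g$ has negative valuation prime to $p$, so $g\notin\wp(K)$. By Lemma \ref{AS totally-ramified}, $L\defeq K_{\wp,g}$ is then a separable, totally ramified extension of degree $p$.

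Now take $x\in L$ with $x^{p}-x=g$. Then
\[
f=s^{p}g=(sx)^{p}-s^{p}x\equiv -s^{p}x \pmod{L^{p}}.
\]
With $v_{L}$ normalized on $L$, so that $v_{L}|_{K}=pv$, we have $v_{L}(x)=v(g)=v(f)-pv(s)$. Hence
\[
v_{L}(s^{p}x)=p^{2}v(s)+v(f)-pv(s)=v(f)+p(p-1)v(s),
\]
which is $\geq 0$ once $v(s)\geq -v(f)/(p(p-1))$. So $f\in\mathcal{O}_{L}+L^{p}$, and the torsor extends to $\mathcal{O}_{L}$.

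I expect the main obstacle to be bookkeeping rather than conceptual. One must make sure that $v(f)-pv(s)$ is simultaneously negative and prime to $p$, which is why the preliminary reduction to $p\nmid v(f)$ is needed. One must also check the identification $H^{1}(\mathcal{O}_{L},\alpha_{p})=\mathcal{O}_{L}/\mathcal{O}_{L}^{p}$ and its compatibility with restriction to $L$, which follows from the vanishing of $H^{1}(\mathcal{O}_{L},\mathbb{G}_{a})$ on the affine scheme $\Spec\mathcal{O}_{L}$.
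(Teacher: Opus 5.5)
Your argument is correct, and it follows a genuinely different, more elementary route than the paper's.

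The paper's route has four steps:
\begin{enumerate}
\item pass to the completion $\widehat{K}$ and use Cohen's theorem to put a representative in the form $\sum_{n<0,\,p\nmid n} b_n t^n$;
\item use the resulting $k$-linear structure on $Q_L=L/(\mathcal{O}_L+L^p)$ to reduce to monomials;
\item kill each $t^{-n}$ in $K_{\wp,t^{-s}}$, for $(p-1)s>np$, via the explicit expansion $t^{-n}=\pi^{-np}+(\text{integral})$ in the uniformizer $\pi=x^at^b$ (Lemmas \ref{uniformizador de L} and \ref{t em funcao de pi});
\item descend back to $K$ using injectivity of $Q_K\to Q_{\widehat{K}}$ (Lemma \ref{injectivity after completion}).
\end{enumerate}

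You only normalize the leading term, which needs nothing beyond perfectness of $k$ and works directly over a non-complete DVR. The one-line identity $f=(sx)^p-s^px$, with $x^p-x=f/s^p$, then replaces all of that machinery. For $f=t^{-n}$ and $s=t^N$, your field is precisely the paper's $K_{\wp,t^{-(n+Np)}}$.

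What the paper's route buys is uniformity. Its extension depends only on a bound for the pole order, not on the class. So a single $K_{\wp,t^{-s}}$ kills every class with $-v(a)\le m$ at once (Proposition \ref{matando classes}), and by Remark \ref{sep fechado} any $K_{\wp,f}$ with $v(f)\ll 0$ prime to $p$ works. Section \ref{3} uses this to kill a whole class in $H^1(K,\alpha_p^r)$ with one extension, which governs the tower lengths there. Section \ref{4} uses it to handle several boundary points simultaneously with one global function. Your extension $K_{\wp,f/s^p}$ is tailored to the individual class $f$, so for those later applications you would have to iterate or do extra work.

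One small correction: Definition \ref{etale extension def} requires $\mathcal{O}_L$ to be finite over $\mathcal{O}_K$. That finiteness comes from separability of $L/K$, not from total ramification.
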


To get to this statement, let me start by recalling some facts regarding cohomology with $\alpha_{p}$ coefficients.

\begin{lemma}\label{H1=R/R^{p}}
    Let $R$ be a commutative ring. Then we have the following
    \begin{itemize}
        \item $H^{1}(R,\alpha_{p})$ is functorially isomorphic to the quotient group $R/R^{p}$;
        \item $H^{i}(R,\alpha_{p})=0$ for $i>1$.
    \end{itemize}
\end{lemma}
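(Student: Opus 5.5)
The plan is to use the Frobenius exact sequence and the vanishing of higher cohomology of quasi-coherent sheaves on affine schemes. Throughout, $R$ is implicitly an $\mathbb{F}_{p}$-algebra, since $\alpha_{p}$ is defined as the kernel of Frobenius on $\mathbb{G}_{a}$ in characteristic $p$. Over $\Spec R$ we have the sequence of commutative group schemes
\[
0\to \alpha_{p}\to \mathbb{G}_{a}\xrightarrow{F}\mathbb{G}_{a}\to 0,
\]
where $F(x)=x^{p}$. By definition of $\alpha_{p}$ it is left exact. I will first check that it is exact as fppf sheaves, i.e.\ that $F$ is an epimorphism in the fppf topology. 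Given a section $a\in \mathcal{O}(V)$ of $\mathbb{G}_{a}$ over some $V=\Spec A$, the map $\Spec A[T]/(T^{p}-a)\to \Spec A$ is finite, free of rank $p$ and surjective, hence an fppf cover. On this cover $a$ acquires the $p$-th root $T$. This gives the exactness.

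Since all three groups are commutative, the remark after Proposition~\ref{obstruction class} tells us that nonabelian $H^{1}$ agrees with the derived-functor cohomology. We may therefore take the long exact sequence in fppf cohomology:
\[
0\to \alpha_{p}(R)\to R\xrightarrow{x\mapsto x^{p}} R\xrightarrow{\delta} H^{1}(R,\alpha_{p})\to H^{1}(R,\mathbb{G}_{a})\to\cdots\to H^{i}(R,\mathbb{G}_{a})\to H^{i+1}(R,\alpha_{p})\to H^{i+1}(R,\mathbb{G}_{a})\to\cdots
\]

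The key input is that $H^{i}_{\mathrm{fppf}}(\Spec R,\mathbb{G}_{a})=0$ for all $i\geq 1$. Here $\mathbb{G}_{a}$ is the fppf sheaf associated to the quasi-coherent sheaf $\mathcal{O}$. By the comparison theorem for quasi-coherent sheaves (Stacks Project, fppf cohomology of quasi-coherent modules equals Zariski cohomology), its fppf cohomology equals its Zariski cohomology on $\Spec R$. The latter vanishes in positive degrees because $\Spec R$ is affine (Serre's vanishing). This comparison is the one non-formal step, and I expect it to be the main obstacle; I will simply cite it rather than reprove descent for quasi-coherent cohomology.

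Plugging the vanishing into the long exact sequence gives two conclusions. First, $\delta$ induces an isomorphism $R/R^{p}\xrightarrow{\sim} H^{1}(R,\alpha_{p})$. Second, for $i\geq 1$ the group $H^{i+1}(R,\alpha_{p})$ sits between $H^{i}(R,\mathbb{G}_{a})=0$ and $H^{i+1}(R,\mathbb{G}_{a})=0$, so it vanishes. Functoriality in $R$ follows from the naturality of the connecting homomorphism with respect to ring maps $R\to R'$, because the Frobenius sequence is compatible with base change.

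Concretely, $\delta(a)$ is the $\alpha_{p}$-torsor $\Spec R[T]/(T^{p}-a)$. This explicit description is worth recording for the later computations.
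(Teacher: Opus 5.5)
Your proposal is correct and follows the paper's proof essentially verbatim. Both use the Frobenius sequence $0\to\alpha_{p}\to\mathbb{G}_{a}\xrightarrow{F}\mathbb{G}_{a}\to 0$, identify the fppf cohomology of $\mathbb{G}_{a}=\mathcal{O}$ with the Zariski cohomology of the structure sheaf, which vanishes on the affine scheme $\Spec R$, and then read off the result from the long exact sequence. You also make explicit two points the paper leaves implicit: the fppf surjectivity of $F$, and that $R$ must be an $\mathbb{F}_{p}$-algebra. Your bookkeeping correctly gives vanishing for $i\geq 2$; the paper's proof writes ``$i>0$'' at that point, which is a slip.
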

\begin{proof}
    Consider the exact sequence of (fppf-)sheaves
    \[
    0\rightarrow \alpha_{p}\rightarrow \mathbb{G}_{a}\xrightarrow{F} \mathbb{G}_{a}\rightarrow 0,
    \]
    where $F$ is the frobenius morphism. Observe that $\mathbb{G}_{a,R}=\mathcal{O}_{R}$ as sheaves on the fppf-site (see \cite[\href{https://stacks.math.columbia.edu/tag/03OG}{Tag 03OG}]{stacks-project}). Therefore, by \cite[\href{https://stacks.math.columbia.edu/tag/03P2}{Tag 03P2}]{stacks-project}, the fppf-cohomology of $\mathbb{G}_{a,R}$ is isomorphic to the quasi-coherent cohomology of the structure sheaf $\mathcal{O}_{R}$ of $R$. Since $\Spec R$ is affine, we have that $H^{i}(R, \mathcal{O}_{R})=0$ for all $i>0$.
    By looking at the cohomology long exact sequence we have that $H^{i}(R,\alpha_{p})=0$ for $i>0$ and moreover, we obtain the exact sequence
    \[
    0\to \alpha_{p}(R)\to R\xrightarrow{F} R\to H^{1}(R,\alpha_{p})\to 0\;;
    \]
    i.e., we have an isomorphism $R/\text{Im}(F)=R/R^{p}\xrightarrow{\sim} H^{1}(R,\alpha_{p})$. Functoriality follows from that of the long exact sequence itself.
\end{proof}

For the rest of this section, let us fix a (not necessarily complete) discrete valuation ring (DVR) $\mathcal{O}_{K}$ of positive characteristic $p>0$ with fraction field $K$, perfect residue field $k$ and normalized valuation $v$.

\begin{definition}
    Let $L/K$ be an extension of fields and $v_{L}$ a discrete valuation of $L$ extending $v$. We define the abelian group
    \[
    Q_{L}\defeq H^{1}(L,\alpha_{p})/H^{1}(\mathcal{O}_{L},\alpha_{p})
    \]
    and denote the class of an element $a\in L$ in $Q_{L}$ by $[a]_{L}$. Given extensions $M/L/K$ of fields and extensions of the valuation $v$, we denote by $Q_{L}\xrightarrow{(\cdot)_{M}} Q_{M}$ the map induced by $H^{1}(L,\alpha_{p})\to H^{1}(M,\alpha_{p})$.
\end{definition}

The group $Q_{K}$ measures which $\alpha_{p}$-torsors over the generic point of $\Spec \mathcal{O}_{K}$ extend to $\Spec \mathcal{O}_{K}$.

\begin{proposition}\label{description of Q}
    The isomorphisms $R/R^{p}\xrightarrow{\sim} H^{1}(R,\alpha_{p})$ for $R=L$ and $R=\mathcal{O}_{L}$ induce an isomorphism $L/(\mathcal{O}_{L}+L^{p})\xrightarrow{\sim} Q_{L}$.
\end{proposition}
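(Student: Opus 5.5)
The plan is to compute $Q_{L}$ directly from Lemma \ref{H1=R/R^{p}} and a commutative diagram. Functoriality in Lemma \ref{H1=R/R^{p}} applied to the inclusion $\mathcal{O}_{L}\hookrightarrow L$ gives a commutative square
\[
\begin{tikzcd}
\mathcal{O}_{L}/\mathcal{O}_{L}^{p} \ar[r, "\sim"]\ar[d] & H^{1}(\mathcal{O}_{L},\alpha_{p})\ar[d]
\\
L/L^{p} \ar[r, "\sim"] & H^{1}(L,\alpha_{p})\;,
\end{tikzcd}
\]
where the left vertical arrow is induced by the inclusion. The key point is that both rows come from the connecting maps of the long exact sequences attached to $0\to\alpha_{p}\to\mathbb{G}_{a}\xrightarrow{F}\mathbb{G}_{a}\to 0$, and these are natural with respect to the ring map $\mathcal{O}_{L}\to L$.

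Since the horizontal maps are isomorphisms, the image of $H^{1}(\mathcal{O}_{L},\alpha_{p})$ in $H^{1}(L,\alpha_{p})$ corresponds, under the bottom isomorphism, to the image of $\mathcal{O}_{L}/\mathcal{O}_{L}^{p}$ in $L/L^{p}$. That image is $(\mathcal{O}_{L}+L^{p})/L^{p}$. Passing to cokernels, the bottom isomorphism therefore induces
\[
L/(\mathcal{O}_{L}+L^{p})\cong (L/L^{p})\big/\big((\mathcal{O}_{L}+L^{p})/L^{p}\big)\xrightarrow{\sim} H^{1}(L,\alpha_{p})/\operatorname{Im}H^{1}(\mathcal{O}_{L},\alpha_{p})=Q_{L}.
\]

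There is one interpretive point to settle. The definition of $Q_{L}$ writes a quotient by $H^{1}(\mathcal{O}_{L},\alpha_{p})$, and this only makes sense as a quotient by its image, since the restriction map need not be injective. Its kernel is $(\mathcal{O}_{L}\cap L^{p})/\mathcal{O}_{L}^{p}$, which is in fact trivial because $\mathcal{O}_{L}$ is normal: if $x^{p}\in\mathcal{O}_{L}$ then $x$ is integral, so $x\in\mathcal{O}_{L}$. In any case, the quotient by the image is what we compute, so injectivity is not needed.

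I expect no real obstacle. The only step needing care is confirming that the isomorphisms of Lemma \ref{H1=R/R^{p}} are compatible with restriction along $\mathcal{O}_{L}\to L$, and this is precisely the functoriality statement already proved there.
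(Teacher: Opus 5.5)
Your proof is correct and is the same argument the paper intends: the paper only says the claim is clear from the functorial isomorphism $R/R^{p}\cong H^{1}(R,\alpha_{p})$. Your commutative square, the identification of the image of $\mathcal{O}_{L}/\mathcal{O}_{L}^{p}$ with $(\mathcal{O}_{L}+L^{p})/L^{p}$, and the passage to cokernels unpack that sentence, and your remark that the restriction map is in fact injective (because $\mathcal{O}_{L}$ is integrally closed in $L$) is correct, though, as you say, not needed.
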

\begin{proof}
    This is clear from Lemma \ref{H1=R/R^{p}}.
\end{proof}

\begin{lemma}\label{Q is k^{p} vector space}
   If $\mathcal{O}_{K}$ is complete, then $Q_{L}$ has the structure of a $k$-vector space for any extension $L/K$. Moreover, for extensions $M/L/K$ the induced map $Q_{L}\to Q_{M}$ is $k$-linear.
\end{lemma}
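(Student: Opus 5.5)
The plan is to get the $k$-structure from a coefficient field inside $\mathcal{O}_{K}$, and to use the description $Q_{L}\cong L/(\mathcal{O}_{L}+L^{p})$ from Proposition \ref{description of Q}. It is then enough to show that the subgroup $\mathcal{O}_{L}+L^{p}\subseteq L$ is stable under multiplication by the scalars, and that the transition maps are induced by inclusions of fields.

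\emph{Step 1 (coefficient field).} Since $\mathcal{O}_{K}$ is a complete DVR of characteristic $p$ with perfect residue field $k$, the Cohen structure theorem gives a coefficient field $k\hookrightarrow \mathcal{O}_{K}$ lifting the residue field. In fact $\mathcal{O}_{K}\cong k[[t]]$ for a uniformizer $t$. Perfectness makes this concrete: the lift can be taken to be $\bigcap_{n} \mathcal{O}_{K}^{p^{n}}$, i.e.\ Teichmüller-type representatives. Via $k\subseteq \mathcal{O}_{K}\subseteq K\subseteq L$, the field $L$ becomes a $k$-vector space by multiplication.

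\emph{Step 2 (stability).} Take $c\in k$. First, $c\mathcal{O}_{L}\subseteq \mathcal{O}_{L}$, since $c\in\mathcal{O}_{K}\subseteq\mathcal{O}_{L}$. This holds because $v_{L}$ extends $v$, so $\mathcal{O}_{L}$ (the valuation ring of $v_L$, or the normalization, which contains $\mathcal{O}_K$) is an $\mathcal{O}_{K}$-algebra. Second, since $k$ is perfect we may write $c=d^{p}$ with $d\in k$, and then $c\,x^{p}=(dx)^{p}\in L^{p}$. Hence $\mathcal{O}_{L}+L^{p}$ is a $k$-subspace of $L$, and the quotient $L/(\mathcal{O}_{L}+L^{p})\cong Q_{L}$ inherits a $k$-vector space structure.

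\emph{Step 3 (linearity).} For $M/L/K$ with compatible valuations we have $\mathcal{O}_{L}\subseteq\mathcal{O}_{M}$ and $L^{p}\subseteq M^{p}$. By the functoriality in Lemma \ref{H1=R/R^{p}}, applied to $L\to M$ and $\mathcal{O}_{L}\to\mathcal{O}_{M}$, the map $Q_{L}\to Q_{M}$ corresponds under Proposition \ref{description of Q} to the map induced by the inclusion $L\hookrightarrow M$. This inclusion is $k$-linear because $k\subseteq L\subseteq M$ is the same copy of $k$ in both.

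The only step needing care is Step 1, namely the existence of a coefficient field compatible with all the extensions. This is handled by fixing it once in $\mathcal{O}_{K}$ and transporting it along the inclusions, and the perfectness of $k$ is what is used in Step 2.
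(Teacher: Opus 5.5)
Your proposal is correct and follows the paper's proof. In both, a coefficient field $k\hookrightarrow\mathcal{O}_{K}$ from Cohen's structure theorem makes $L$ a $k$-vector space, $\mathcal{O}_{L}$ and $L^{p}$ are $k$-subspaces (the latter because $k=k^{p}$), so $Q_{L}\cong L/(\mathcal{O}_{L}+L^{p})$ inherits the structure, and $k$-linearity of $Q_{L}\to Q_{M}$ comes from the inclusion $L\subseteq M$. Your explicit choice $\bigcap_{n}\mathcal{O}_{K}^{p^{n}}$ for the coefficient field is a correct, canonical refinement that the argument does not need.
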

\begin{proof}
    By Cohen's struture Theorem (see \cite[28.P]{matsumura}), if $\mathcal{O}_{K}$ is complete, there exists a section $k\hookrightarrow \mathcal{O}_{K}$ of the reduction map $\mathcal{O}_{K}\to k$, endowing $K$ and therefore any extension $L/K$ with a $k$-vector space structure. It is clear that $\mathcal{O}_{L}$ is a sub $k$-vector space, and since $k^{p}=k$, $L^{p}$ is also a sub $k$-vector space of $K$. Therefore, the quotient $Q_{L}$ is also a $k$-vector space. It is also clear that given extensions $M/L/K$, the map $Q_{L}\to Q_{M}$ is $k$-linear.
\end{proof}

\begin{lemma}\label{forma normal Q_K}
    Suppose that $\mathcal{O}_{K}$ is complete. If $[a]_{K}\in Q_{K}$ is nonzero, then it has a representative $a_{0}\in K$ with $v(a_{0})<0$ and $p\nmid v(a_{0})$. More precisely, given a uniformizer $t$ of $K$, we can arrange so that the power series expansion of $a_{0}$ is a finite sum of negative exponent terms all non divisible by $p$.
\end{lemma}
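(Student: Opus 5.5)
Since $\mathcal{O}_{K}$ is complete with perfect residue field $k$ of characteristic $p$, Cohen's structure theorem (as in the proof of Lemma \ref{Q is k^{p} vector space}) gives $K\cong k((t))$ for any uniformizer $t$, with $\mathcal{O}_{K}\cong k[[t]]$. By Proposition \ref{description of Q}, $Q_{K}\cong K/(\mathcal{O}_{K}+K^{p})$, so the plan is to take a representative $a\in K$ of the nonzero class $[a]_{K}$ and modify it by elements of $\mathcal{O}_{K}$ and of $K^{p}$. Concretely, I write the Laurent expansion
\[
a=\sum_{n\geq -N} c_{n}t^{n},\qquad c_{n}\in k,
\]
which involves only finitely many negative exponents.

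First, I subtract the power series part $\sum_{n\geq 0}c_{n}t^{n}\in\mathcal{O}_{K}$. This leaves a finite sum $\sum_{-N\leq n<0}c_{n}t^{n}$ with the same class in $Q_{K}$.

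Second, I remove the terms whose exponent is divisible by $p$. For $n=pm<0$, perfectness of $k$ gives $d\in k$ with $d^{p}=c_{n}$, so $c_{n}t^{pm}=(dt^{m})^{p}\in K^{p}$. Subtracting all of these (finitely many) terms does not change the class. The result is
\[
a_{0}=\sum_{\substack{-N\leq n<0\\ p\nmid n}}c_{n}t^{n},
\]
a finite sum of negative-exponent terms, none divisible by $p$, with $[a_{0}]_{K}=[a]_{K}$.

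Finally, $a_{0}\neq 0$, since otherwise $[a]_{K}=0$. Its valuation is the smallest exponent $n$ with $c_{n}\neq 0$, which is negative and prime to $p$. This gives both assertions of the lemma.

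The only delicate point is the first step. It needs the identification of $K$ with Laurent series having coefficients in the image of a section $k\hookrightarrow\mathcal{O}_{K}$, so that the tail really lies in $\mathcal{O}_{K}$ and the $p$-th roots of coefficients can be taken inside that copy of $k$. Completeness provides exactly this. The rest is bookkeeping.
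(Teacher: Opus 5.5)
Your proof is correct and follows the paper's argument essentially step for step. You expand $a$ as a Laurent series in $t$ over a coefficient field, then discard the nonnegative part (which lies in $\mathcal{O}_{K}$) and the terms whose exponent is divisible by $p$ (which lie in $K^{p}$ since $k$ is perfect). You also make explicit the final check that $a_{0}\neq 0$ and that $v(a_{0})$ is its lowest exponent, which the paper leaves implicit.
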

\begin{proof}
    Let $t$ be a uniformizer of $K$ and
    \[
    a=\sum_{m\geq -N} b_{m}t^{m}
    \]
    be the expansion of $a$ as a power series on $t$ with coefficients in $k$. We can then write $a=b+a_{1}$ where $b\in \mathcal{O}_{K}$ is the positive part of the expansion, and $a_{1}\in K$ is a finite combination of negative powers of $t$.
    Moreover, since $k$ is perfect, we can separate the ``$p$-power part'' of $a_{1}$, i.e., there exists $c\in K^{p}$ such that
    \[
    a_{1}=\sum_{m=-N}^{-1} b_{m}t^{m} = c + \sum_{\mathclap{\substack{m=-N\\
                                     p\nmid m}}}^{-1} b_{m}t^{m}.
    \]
    In conclusion, we have
    \[
    a=b+c+\sum_{\mathclap{\substack{m=-N\\
                                     p\nmid m}}}^{-1} b_{m}t^{m}.
    \]
    where $b\in \mathcal{O}_{K}$ and $c\in K^{p}$. Thus, taking
    \[
    a_{0}\defeq \sum_{\mathclap{\substack{m=-N\\
                                     p\nmid m}}}^{-1} b_{m}t^{m},
    \]
    we have that $[a]_{K}=[a_{0}]_{K}$.
\end{proof}

\begin{remark}
    Let me observe that the preceding calculation is basically the same as done in \cite[Chapter 10.4]{Artin} although in a slightly different context.
\end{remark}

The claim that $\alpha_{p}$-torsors étale extend from $\Spec K$ to $\Spec\mathcal{O}_{K}$ is then equivalent to proving that for any $\alpha\in H^{1}(K,\alpha_{p})$, there exists a finite separable extension $L/K$ such that the class of $\alpha_{L}$ in $Q_{L}$ is zero.

Now, we will find and study certain Artin--Schreier extensions of $K$ and their uniformizers, which will later allow us to compute by hand which torsors extend after passing to these extensions.

\begin{lemma}\label{uniformizador de L}
     Let $f\in K$ such that $v(f)=-s$ for some $s\in \mathbb{N}$ prime to $p$. Consider the Artin--Schreier extension $L=K_{\wp, f}$. Let $x\in L$ be such that $x^{p}-x=f$ and $a,b\in \mathbb{Z}$ be such that $-sa+pb=1$.
    Then for any uniformizer $t$ of $K$, $\pi=x^{a}t^{b}\in L$ is a uniformizer of $L$.
\end{lemma}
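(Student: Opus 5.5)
The plan is to compute $v_{L}(\pi)$ directly and check that it equals $1$, where $v_{L}$ is the normalized valuation on $L$ extending $v$. First, $L/K$ is a nontrivial Artin--Schreier extension: by the remark following Lemma \ref{AS totally-ramified}, $f\notin\wp(K)$ since $v(f)=-s<0$ with $p\nmid s$. Lemma \ref{AS totally-ramified} then says $v$ has a unique extension to $L$, totally ramified with $e(v_{L}|v)=p$. So the normalized valuation satisfies $v_{L}|_{K}=p\cdot v$, and in particular $v_{L}(t)=p$ and $v_{L}(f)=-ps$.

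Next I compute $v_{L}(x)$, redoing the argument in the proof of Lemma \ref{AS totally-ramified} with the normalized valuation. If $v_{L}(x)\geq 0$, then $v_{L}(x^{p}-x)\geq 0$, which contradicts $v_{L}(f)<0$. Hence $v_{L}(x)<0$, so $pv_{L}(x)<v_{L}(x)$, and the strict triangle inequality gives
\[
v_{L}(f)=v_{L}(x^{p}-x)=pv_{L}(x).
\]
Therefore $v_{L}(x)=-s$.

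Finally,
\[
v_{L}(\pi)=a\,v_{L}(x)+b\,v_{L}(t)=-sa+pb=1,
\]
so $\pi$ is a uniformizer of $\mathcal{O}_{L}$, the normalization of $\mathcal{O}_{K}$ in $L$, which is a DVR because $v_{L}$ is the unique extension. This holds for any uniformizer $t$ of $K$ and any Bézout pair $(a,b)$, and such a pair exists because $\gcd(s,p)=1$.

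The only step needing care is the normalization. Lemma \ref{AS totally-ramified} computes $v_{L}(x)=v(f)/p$ using the valuation that extends $v$, so after rescaling by $e(v_{L}|v)=p$ one must get $v_{L}(x)=-s$ and $v_{L}(t)=p$. Everything else is routine.
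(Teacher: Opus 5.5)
Your proof is correct and is the same argument as the paper's. Both use Lemma \ref{AS totally-ramified} to get a unique, totally ramified extension with $e(v_{L}|v)=p$, compute $v_{L}(x)=-s$ and $v_{L}(t)=p$ for the normalized valuation, and conclude that $v_{L}(\pi)=-sa+pb=1$. Your explicit treatment of the normalization, and your check that $f\notin\wp(K)$, spell out what the paper's one-line ``$v_{L}(x)=v(f)$'' leaves implicit, including its silent use of $v_{L}(t)=p$.
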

\begin{proof}
    By Proposition \ref{AS totally-ramified}, $L/K$ is totally ramified and therefore, there is a unique normalized extension of $v$ to $L$. As in Proposition \ref{AS totally-ramified}, we have that $v_{L}(x)=v(f)$.

    We can conclude that
    \[
    v_{L}(\pi)=v_{L}(x^{a}t^{b})=-sa+pb=1\;;
    \]
    i.e. that $\pi$ is a uniformizer of $L$.
\end{proof}

\begin{lemma}\label{t em funcao de pi}
    Let $t$ be a uniformizer of $K$ and let $s\in \mathbb{N}$ be prime to $p$. Let $L\defeq K_{\wp, t^{-s}}$ with $v_{L}$ the normalized valuation of $L$. Let $x\in L$ be such that $x^{p}-x=f$ and $\pi$ be the uniformizer obtained in Lemma \ref{uniformizador de L} for $f=t^{-s}$. Then for every integer $m>0$ prime to $p$,
    \[
    \frac{1}{t^{m}}=\frac{1}{\pi^{mp}}+g\in L
    \]
    where $g\in L$ is such that $v_{L}(g)=(p-1)s-mp$.
\end{lemma}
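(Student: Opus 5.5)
The plan is to obtain an exact multiplicative relation between $t$ and $\pi^{p}$ directly from the Artin--Schreier equation, rather than expanding in power series. Throughout I will use the normalized valuation $v_{L}$. By Lemma \ref{AS totally-ramified}, $L/K$ is totally ramified of degree $p$, so $v_{L}(t)=p$. The computation in that lemma gives $pv_{L}(x)=v_{L}(t^{-s})=-sp$, hence $v_{L}(x)=-s$. This is consistent with Lemma \ref{uniformizador de L}, which says $v_{L}(x^{a}t^{b})=-sa+pb=1$.

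First I rewrite the defining equation as
\[
t^{-s}=x^{p}-x=x^{p}u,\qquad u\defeq 1-x^{1-p}.
\]
Here $v_{L}(x^{1-p})=(p-1)s>0$, so $u$ is a principal unit of $\mathcal{O}_{L}$. Therefore $x^{p}=t^{-s}u^{-1}$. Next I compute
\[
\pi^{p}=x^{ap}t^{bp}=t^{-sa}u^{-a}t^{bp}=t^{-sa+pb}u^{-a}=t\,u^{-a},
\]
using the Bézout relation $-sa+pb=1$. Raising this to the power $-m$ gives $\pi^{-mp}=t^{-m}u^{am}$. Equivalently,
\[
\frac{1}{t^{m}}=\frac{1}{\pi^{mp}}+g,\qquad g\defeq \frac{1}{t^{m}}\bigl(1-u^{am}\bigr).
\]
This is an exact identity in $L$, so the rest of the proof is computing $v_{L}(g)$.

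Since $v_{L}(t^{-m})=-mp$, it remains to show that $v_{L}(1-u^{am})=(p-1)s$. Put $y\defeq x^{1-p}$ and $n\defeq am\in\mathbb{Z}$. When $n>0$, the binomial expansion gives
\[
1-(1-y)^{n}=ny-\binom{n}{2}y^{2}+\cdots=y\,(n+y\cdot c)
\]
with $c\in\mathcal{O}_{L}$. When $n<0$, I write $1-(1-y)^{n}=-(1-y)^{n}\bigl(1-(1-y)^{-n}\bigr)$, which reduces to the positive case up to multiplication by a unit. In either case $v_{L}(1-u^{n})=v_{L}(y)=(p-1)s$ as soon as $n$ is a unit in $\mathcal{O}_{L}$, that is, $p\nmid n$.

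The only point requiring care, and the step I would check most carefully, is this nondivisibility. From $-sa+pb=1$ we get $sa\equiv -1 \pmod p$, so $p\nmid a$. By hypothesis $p\nmid m$, hence $p\nmid am$. This gives $v_{L}(g)=-mp+(p-1)s$, as claimed.

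If $p$ divided $am$, the leading term $ny$ would vanish and the valuation would jump. This is exactly why the hypotheses $p\nmid m$ and $\gcd(s,p)=1$, the latter entering through $a$, are needed.
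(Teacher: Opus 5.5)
Your proof is correct, and it is essentially the paper's computation. The paper expands $(x^{p})^{ma}=(t^{-s}+x)^{ma}=t^{-sma}(1+xt^{s})^{ma}$ binomially, and $1+xt^{s}=x^{p}t^{s}$ is precisely your $u^{-1}$, so reading off the valuation from the surviving $n=1$ term is the same as your fact $v_{L}(1-u^{n})=v_{L}(1-u)$ for $p\nmid n$. Your multiplicative packaging, via the exact identity $\pi^{p}=tu^{-a}$, is somewhat cleaner, and it explicitly covers $a<0$ and proves $p\nmid a$; the paper's finite sum over $1\leq n\leq ma$ leaves both of these points implicit.
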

\begin{proof}
    This is equivalent to proving that $v_{L}\left(\frac{1}{t^{m}}-\frac{1}{\pi^{mp}}\right)=(p-1)s-mp$.
    Let us start by calculating $v_{L}(t^{m}-\pi^{mp})$. Observe that
    \begin{align*}
        t^{m}-\pi^{mp} &=t^{m}-(x^{p})^{mu}t^{mpv}=t^{m}-\left(\frac{1}{t^{s}}+x\right)^{mu}t^{mpv}\;
        \\
        &=t^{m}-t^{mpv}\sum_{n\geq 0}\binom{mu}{n}\frac{x^{n}}{t^{(msu-sn)}}
        \\
        &=t^{m}-t^{mpv}\frac{1}{t^{smu}}-t^{mpv}\sum_{n\geq 1}\binom{mu}{n}\frac{x^{n}}{t^{(msu-sn)}}
        \\
        &=t^{m}-t^{m(-su+pv)}-t^{mpv}\sum_{n\geq 1}\binom{mu}{n}\frac{x^{n}}{t^{(msu-sn)}}
        \\
        &=-t^{mpv}\sum_{n\geq 1}\binom{mu}{n}\frac{x^{n}}{t^{(msu-sn)}}\;.
    \end{align*}
    To calculate $v_{L}(\sum_{n\geq 1}\binom{mu}{n}\frac{x^{n}}{t^{(msu-sn)}})$, observe that for all $\binom{mu}{n}$ not divisible by $p$, we have
    \[
    v_{L}\left(\binom{mu}{n}\frac{x^{n}}{t^{(msu-sn)}}\right)=-pmsu+sn(p-1)\;.
    \]
    Since all of these valuations are different as $n$ varies, we have that
    \begin{align*}
        v_{L}\left(\sum_{n\geq 1}\binom{mu}{n}\frac{x^{n}}{t^{(msu-sn)}}\right) &=\text{min}\{-pmsu+sn(p-1)\}
        \\
        &=-pmsu+s(p-1)\;,
    \end{align*}
    where the minimun is taken among $1\leq n\leq mu$ such that $p\nmid \binom{mu}{n}$, and achieved at $n=1$. Therefore, 
    \begin{align*}
        v_{L}(t^{m}-\pi^{mp}) &=v_{L}(-t^{mpv}\sum_{n\geq 1}\binom{mu}{n}\frac{x^{n}}{t^{(msu-sn)}})
        \\
        &=mp^{2}v-pmsu+s(p-1)
        \\
        &=mp(-su+pv)+s(p-1)
        \\
        &=mp+s(p-1)\;.
    \end{align*}
    We then obtain that
    \begin{align*}
        v_{L}\left(\frac{1}{t^{m}}-\frac{1}{\pi^{mp}}\right)&=v_{L}\left(\frac{\pi^{mp}-t^{m}}{t^{m}\pi^{mp}}\right)
        \\
        &=v_{L}(\pi^{mp}-t^{m})-v_{L}(t^{m}\pi^{mp})
        \\
        &=mp+(p-1)s-2mp
        \\
        &=(p-1)s-mp,
    \end{align*}
    finishing the proof.
\end{proof}
    
Recall that if $\mathcal{O}_{K}$ is complete, we have a $k$-vector space structure on $Q_{K}$. We can exploit this structure to understand when a torsor extends after one of these totally ramified Artin--Schreier extensions.

\begin{proposition}\label{matando classes}
    Suppose that $\mathcal{O}_{K}$ is complete. Let $a\in K$ with $v(a)=-m<0$ not divisible by $p$ and $t$ be a uniformizer of $K$. Let $s\in \mathbb{N}$ not divisible by $p$ be such that $(p-1)s>mp$. Let $L\defeq K_{\wp,t^{-s}}$. Then, we have $[a]_{L}=0\in Q_{L}$.
\end{proposition}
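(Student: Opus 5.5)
The plan is to reduce, by $k$-linearity, to the single monomials $t^{-j}$ and then kill each one using the explicit expansion of Lemma \ref{t em funcao de pi}. Since $\mathcal{O}_{K}$ is complete, Cohen's structure theorem gives a section $k\hookrightarrow\mathcal{O}_{K}$. We expand
\[
a=\sum_{j\geq -m} b_{j}t^{j},\qquad b_{j}\in k,
\]
and the expansion starts exactly at $t^{-m}$ because $v(a)=-m$. Arguing as in the proof of Lemma \ref{forma normal Q_K}, we discard the part with nonnegative exponents, which lies in $\mathcal{O}_{K}$, and the part with negative exponents divisible by $p$, which lies in $K^{p}$ because $k$ is perfect. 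This gives
\[
[a]_{K}=\Bigl[\sum_{\substack{1\leq j\leq m\\ p\nmid j}} b_{-j}t^{-j}\Bigr]_{K}.
\]
Mapping to $Q_{L}$ via $(\cdot)_{L}$, which is $k$-linear by Lemma \ref{Q is k^{p} vector space}, it suffices to show that $[t^{-j}]_{L}=0$ for every $1\leq j\leq m$ with $p\nmid j$. The $k$-structure on $L$ here is the one induced from $K$, so $b_{-j}t^{-j}$ has class $b_{-j}[t^{-j}]_{L}$.

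For such a $j$, we apply Lemma \ref{t em funcao de pi} with $f=t^{-s}$ (note that $p\nmid s$, so $L=K_{\wp,t^{-s}}$ is a totally ramified Artin--Schreier extension). This gives
\[
t^{-j}=\pi^{-jp}+g,\qquad v_{L}(g)=(p-1)s-jp.
\]
Since $j\leq m$, we get $v_{L}(g)\geq (p-1)s-mp>0$ by the hypothesis on $s$, so $g\in\mathcal{O}_{L}$. Moreover $\pi^{-jp}=(\pi^{-j})^{p}\in L^{p}$. By Proposition \ref{description of Q}, $Q_{L}\cong L/(\mathcal{O}_{L}+L^{p})$, hence $[t^{-j}]_{L}=0$. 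Summing over $j$ then yields $[a]_{L}=0$.

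I do not expect a genuine obstacle, since the heavy computation is already contained in Lemma \ref{t em funcao de pi}. The only points needing care are bookkeeping ones. First, the normal-form reduction must keep all exponents within $[-m,-1]$; this is automatic from $v(a)=-m$, and it is what allows the single bound $(p-1)s>mp$ to handle every $j$ simultaneously. Second, the $k$-linearity must be compatible between $K$ and $L$, which holds because the $k$-structure on $L$ is the one transported from the section $k\hookrightarrow\mathcal{O}_{K}$.
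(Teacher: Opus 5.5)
Your proof is correct and follows the paper's argument essentially verbatim. You reduce, via the normal form of Lemma \ref{forma normal Q_K} and $k$-linearity, to the monomials $t^{-j}$ with $1\leq j\leq m$, and then use Lemma \ref{t em funcao de pi} to write $t^{-j}=\pi^{-jp}+g$ with $g\in\mathcal{O}_{L}$. Your explicit restriction to $p\nmid j$ is slightly more careful than the paper's phrasing, since Lemma \ref{t em funcao de pi} is only stated for exponents prime to $p$.
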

\begin{proof}
    By Lemma \ref{forma normal Q_K}, we can suppose that
    \[
    a=\sum_{\mathclap{\substack{n=-m\\
                                     p\nmid n}}}^{-1}
                 b_{n}t^{n}.
    \]
    Taking the associated class in $Q_{L}$, we have
    \[
    [a]_{L}=\left [\;\; \sum_{\mathclap{\substack{n=-m\\
                                     p\nmid n}}}^{-1}
                 b_{n}t^{n} \right ]_{L} = \sum_{\mathclap{\substack{n=-m\\
                                     p\nmid n}}}^{-1}
                 [b_{n}t^{n}]_{L} = \sum_{\mathclap{\substack{n=-m\\
                                     p\nmid n}}}^{-1}
                 b_{n}[t^{n}]_{L}\;.
    \]
    Therefore, it's enough to prove that $[t^{-n}]_{L}=0$ for $1\leq n\leq m$. Let $\pi\in L$ be the uniformizer obtained in Lemma \ref{uniformizador de L}, then by Lemma \ref{t em funcao de pi}, we have that
    \[
    \frac{1}{t^{n}}=\frac{1}{\pi^{np}}+g
    \]
    for some $g\in L$ with $v_{L}(g)=(p-1)s-np>0$. Since $g\in \mathcal{O}_{L}$ and $\pi^{-np}\in L^{p}$, we have that $[t^{-n}]_{L}=0$.
\end{proof}

Thus, we have the following.

\begin{proposition}
    Let $\mathcal{O}_{K}$ be a complete DVR of positive characteristic with fraction field $K$ and perfect residue field $k$. Let $P\to \Spec K$ be an $\alpha_{p}$-torsor whose cohomology class has a representative $a\in K$ with $p\nmid v(a)=-m<0$. Then, if $t$ is a uniformizer of $K$, for any $s\in\mathbb{N}$ not divisible by $p$ such that $(p-1)s>mp$, the torsor $P_{L}\to\Spec L$ extends to $\Spec \mathcal{O}_{L}$, where $L\defeq K_{\wp,t^{-s}}$.
\end{proposition}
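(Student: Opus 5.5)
This is essentially a translation of Proposition \ref{matando classes} into the language of torsors, using the dictionary of Lemma \ref{H1=R/R^{p}} and Proposition \ref{description of Q}. The plan is to show that the class of $P_{L}$ lies in the image of $H^{1}(\mathcal{O}_{L},\alpha_{p})\to H^{1}(L,\alpha_{p})$. That is exactly what it means for $P_{L}\to\Spec L$ to extend to an $\alpha_{p}$-torsor over $\Spec\mathcal{O}_{L}$.

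First, identify $[P]\in H^{1}(K,\alpha_{p})$ with the class of $a$ in $K/K^{p}$, as in Lemma \ref{H1=R/R^{p}}. By functoriality of that isomorphism, the base change $P_{L}$ corresponds to the image of $a$ in $L/L^{p}$. Next, check that the setup of Proposition \ref{matando classes} is in force. Since $s$ is prime to $p$ and $v(t^{-s})=-s<0$, Lemma \ref{AS totally-ramified} shows that $L=K_{\wp,t^{-s}}$ is a genuine degree $p$ Artin--Schreier extension, totally ramified over $v$. Hence $\mathcal{O}_{L}$, the normalization of $\mathcal{O}_{K}$ in $L$, is a DVR; it is moreover complete, being finite over the complete DVR $\mathcal{O}_{K}$, so $Q_{L}$ carries the $k$-structure of Lemma \ref{Q is k^{p} vector space}. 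The hypotheses on $a$ and $s$ coincide with those of Proposition \ref{matando classes}, which therefore gives $[a]_{L}=0$ in $Q_{L}$.

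By Proposition \ref{description of Q}, $[a]_{L}=0$ means $a\in\mathcal{O}_{L}+L^{p}$. Write $a=b+c^{p}$ with $b\in\mathcal{O}_{L}$ and $c\in L$. Then $[P_{L}]$ equals the image of the class of $b$ in $\mathcal{O}_{L}/\mathcal{O}_{L}^{p}\cong H^{1}(\mathcal{O}_{L},\alpha_{p})$. To make the extension explicit, take $\Spec\mathcal{O}_{L}[T]/(T^{p}-b)$ with its natural $\alpha_{p}$-action. Its generic fibre is isomorphic to $P_{L}$ via the translation $T\mapsto T+c$.

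The only points needing care are bookkeeping ones, and none seems to be a real obstacle. One is the compatibility of the isomorphism of Lemma \ref{H1=R/R^{p}} with restriction from $\mathcal{O}_{L}$ to $L$, which follows from functoriality of the long exact sequence attached to $0\to\alpha_{p}\to\mathbb{G}_{a}\xrightarrow{F}\mathbb{G}_{a}\to 0$. The other is the completeness of $\mathcal{O}_{L}$, which Proposition \ref{matando classes} implicitly uses through Lemma \ref{forma normal Q_K} and the $k$-linearity; it holds because $\mathcal{O}_{L}$ is finite over a complete local ring.
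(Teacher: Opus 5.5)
Your proposal is correct and follows the paper's route: the paper states this proposition as an immediate consequence of Proposition \ref{matando classes}, read through the identification $Q_{L}\cong L/(\mathcal{O}_{L}+L^{p})$ of Proposition \ref{description of Q}, which is exactly the translation you write out (the explicit extension $\Spec\mathcal{O}_{L}[T]/(T^{p}-b)$ is a correct concrete addition). One small point: the completeness of $\mathcal{O}_{L}$, while true, is not actually needed. Lemma \ref{forma normal Q_K} is only applied over $K$, and the $k$-structure on $Q_{L}$ comes from the Cohen section $k\hookrightarrow\mathcal{O}_{K}$.
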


Now, we would like to extend the preceding proposition to non complete DVRs. The main point here will be that if a torsor extends after completion, then it already does before. More precisely, we have the following lemma.

\begin{lemma}\label{injectivity after completion}
    Let $\widehat{K}$ be the completion of $K$. The morphism $Q_{K}\xrightarrow{(\cdot)_{\widehat{K}}} Q_{\widehat{K}}$ is injective.
\end{lemma}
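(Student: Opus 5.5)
By Proposition \ref{description of Q}, the statement amounts to the following. If $a\in K$ satisfies $a\in \widehat{\mathcal{O}}_{K}+\widehat{K}^{p}$, then $a\in \mathcal{O}_{K}+K^{p}$. The idea is to approximate: $K$ is dense in $\widehat{K}$, and $\widehat{\mathcal{O}}_{K}$ is an open subgroup of $\widehat{K}$, so we can move elements of $\widehat{K}$ into $K$ at the cost of an error in $\widehat{\mathcal{O}}_{K}$.

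Concretely, write $a=b+c^{p}$ with $b\in\widehat{\mathcal{O}}_{K}$ and $c\in\widehat{K}$. By density of $K$ in $\widehat{K}$, choose $c_{0}\in K$ with $\widehat{v}(c-c_{0})\geq 0$, so that $c=c_{0}+\varepsilon$ with $\varepsilon\in\widehat{\mathcal{O}}_{K}$. In characteristic $p$ the Frobenius is additive, so
\[
a=b+c_{0}^{p}+\varepsilon^{p}.
\]
Hence $a-c_{0}^{p}=b+\varepsilon^{p}$ lies in $\widehat{\mathcal{O}}_{K}$. On the other hand $a-c_{0}^{p}$ lies in $K$, and $K\cap\widehat{\mathcal{O}}_{K}=\mathcal{O}_{K}$ because the valuation on $\widehat{K}$ restricts to $v$ on $K$. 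Therefore $a\in\mathcal{O}_{K}+K^{p}$, which means $[a]_{K}=0$. This gives injectivity of $Q_{K}\to Q_{\widehat{K}}$.

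No step is a real obstacle. The only points that need checking are the identification of the map $Q_{K}\to Q_{\widehat{K}}$ with the map $K/(\mathcal{O}_{K}+K^{p})\to \widehat{K}/(\widehat{\mathcal{O}}_{K}+\widehat{K}^{p})$ induced by inclusion, and the fact that the normalization of $\mathcal{O}_{K}$ in $\widehat{K}$ is $\widehat{\mathcal{O}}_{K}$. The first follows from the functoriality in Lemma \ref{H1=R/R^{p}}. The second holds because $\widehat{\mathcal{O}}_{K}$ is the valuation ring of $\widehat{K}$ and is integrally closed.
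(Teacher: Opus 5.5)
Your argument is correct and is essentially identical to the paper's proof. Like the paper, you write $a=\widehat{u}+\widehat{v}^{p}$, replace $\widehat{v}$ by an element of $K$ up to an error in $\mathcal{O}_{\widehat{K}}$, use additivity of Frobenius, and conclude from $K\cap\mathcal{O}_{\widehat{K}}=\mathcal{O}_{K}$.

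One small correction to your closing remark. Read $\mathcal{O}_{\widehat{K}}$ as the valuation ring of $\widehat{K}$, as the paper does, not as the integral closure of $\mathcal{O}_{K}$ in $\widehat{K}$. That integral closure is in general strictly smaller, because $\widehat{K}/K$ is usually transcendental; integral closedness of $\widehat{\mathcal{O}}_{K}$ only gives one inclusion. Injectivity holds under either reading anyway, since a smaller subgroup only shrinks the kernel.
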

\begin{proof}
    Let $a\in K$ such that $[a]_{\widehat{K}}=0$. We then have that $a=\widehat{u}+\widehat{v}^{p}$, where $\widehat{u}\in\mathcal{O}_{\widehat{K}}$ and $\widehat{v}\in\widehat{K}$.
    Since $K$ is dense in $\widehat{K}$, we can find $v\in K$ and $\widehat{w}\in \mathcal{O}_{\widehat{K}}$ such that $\widehat{v}=v+\widehat{w}$. Set $u\defeq a-v^{p}=\widehat{u}+\widehat{w}^{p}\in K\cap\mathcal{O}_{\widehat{K}}=\mathcal{O}_{K}$. We then conclude that $[a]_{K}=[u+v^{p}]_{K}=0$.
\end{proof}

Observe that the extensions we used in the complete case are already defined before completion. More precisely, if $t$ is a uniformizer of $K$ and $s\in\mathbb{N}$ not divisible by $p$, let $L=K_{\wp, t^{-s}}$. We then have that
\[
    \widehat{L}\simeq \widehat{K}_{\wp, t^{-s}}\;,
\]
where the valuation on $L$ is the unique valuation extending that of $K$ (it is unique since $L/K$ is totally ramified). This allows us to prove the following lemma.

\begin{lemma}
    Let $p\nmid s>0$, $L\defeq K_{\wp, t^{-s}}$ and $\lambda\in H^{1}(K,\alpha_{p})$. Suppose that $\lambda_{\widehat{K}}\in H^{1}(\widehat{K},\alpha_{p})$ étale extends to $\Spec \mathcal{O}_{\widehat{K}}$ after passing to $\widehat{L}$. Then $\alpha$ étale extends to $\Spec \mathcal{O}_{K}$ after passing to $L$.
\end{lemma}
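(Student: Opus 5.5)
Write $\alpha$ for $\lambda$, as the statement does. The claim is that $\lambda_L$ has zero class in $Q_L$. We get this by applying Lemma \ref{injectivity after completion} to $L$ in place of $K$. Note that $L$ is again the fraction field of the DVR $\mathcal{O}_L$, and its residue field equals $k$, hence is perfect, since $L/K$ is totally ramified by Lemma \ref{AS totally-ramified}. That lemma was proved for an arbitrary DVR with fraction field $K$, and its proof (density plus $K\cap\mathcal{O}_{\widehat K}=\mathcal{O}_K$) never used anything specific to the base. So $Q_L\to Q_{\widehat L}$ is injective.

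\emph{Step 1: compatibility of completions.} As observed just before the statement, the unique valuation on $L$ extending $v$ has completion $\widehat{L}\simeq \widehat{K}_{\wp,t^{-s}}$. This isomorphism is compatible with the embeddings of $K$, so the square of fields $K\subseteq L\subseteq \widehat L$ and $K\subseteq \widehat K\subseteq \widehat L$ commutes. Because restriction is functorial on $H^1(-,\alpha_p)$ and on the quotients $Q$, the image of $[\lambda]_K$ in $Q_{\widehat L}$ equals both $([\lambda_L]_L)_{\widehat L}$ and $([\lambda_{\widehat K}]_{\widehat K})_{\widehat L}$.

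\emph{Step 2: use the hypothesis.} By hypothesis, $\lambda_{\widehat K}$ becomes extendable to $\Spec\mathcal{O}_{\widehat L}$ after passing to $\widehat L$. By Proposition \ref{description of Q}, this means the class of $\lambda_{\widehat L}$ in $Q_{\widehat L}$ vanishes. By Step 1, $([\lambda_L]_L)_{\widehat L}=0$.

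\emph{Step 3: conclude.} Injectivity of $Q_L\to Q_{\widehat L}$ then gives $[\lambda_L]_L=0$. In other words, $\lambda_L$ lies in the image of $H^1(\mathcal{O}_L,\alpha_p)\to H^1(L,\alpha_p)$, so $\lambda_L$ extends to $\Spec\mathcal{O}_L$. Since $L/K$ is a finite separable (Artin--Schreier) extension and $\mathcal{O}_L$ is the normalization of $\mathcal{O}_K$ in $L$, this is precisely an étale extension in the sense of Definition \ref{etale extension def}. (Note that $\Spec\mathcal{O}_L\to\Spec\mathcal{O}_K$ is finite, because $L/K$ is separable.)

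The main obstacle is Step 1. One has to make sure that the completion of $L$ is really $\widehat{K}\otimes_K L\cong\widehat K_{\wp,t^{-s}}$, i.e.\ that $t^{-s}$ is not an Artin--Schreier value in $\widehat K$, so that this tensor product is a field. This holds because $v(t^{-s})=-s<0$ with $p\nmid s$; by the remark following Lemma \ref{AS totally-ramified}, such an element is never in $\wp(\widehat K)$. Hence the tensor product is a field, and the valuation on it is the unique one extending $v$. Everything else is formal functoriality.
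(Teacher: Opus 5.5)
Your proposal is correct and follows the paper's own proof. Like the paper, you apply Lemma \ref{injectivity after completion} to $L$ to get injectivity of $Q_L\to Q_{\widehat L}$, then use $(\lambda_L)_{\widehat L}=(\lambda_{\widehat K})_{\widehat L}=0$. The paper leaves the supporting checks implicit, and you supply them: that $\widehat L\cong\widehat K\otimes_K L$ is a field because $t^{-s}\notin\wp(\widehat K)$, that the embeddings of $K$ are compatible, and that $\mathcal{O}_L$ is a DVR finite over $\mathcal{O}_K$.
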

\begin{proof}
    By lemma \ref{injectivity after completion}, we have that $Q_{L}\xrightarrow{(\cdot)_{\widehat{L}}} Q_{\widehat{L}}$ is injective, therefore, since $0=\lambda_{\widehat{L}}=(\lambda_{L})_{\widehat{L}}$, we conclude that $\lambda_{L}=0$.
\end{proof}

We have therefore proved the following.

\begin{proposition}\label{alpha_{p} dvr}
    Let $\mathcal{O}_{K}$ be a DVR of positive characteristic with fraction field $K$ and perfect residue field $k$. Let $P\to \Spec K$ be an $\alpha_{p}$-torsor whose cohomology class has a representative $a\in K$ with $p\nmid v(a)=-m<0$. Then, if $t$ is a uniformizer of $K$, for any $s\in\mathbb{N}$ not divisible by $p$ such that $(p-1)s>mp$, $P_{L}\to\Spec L$ extends to $\Spec \mathcal{O}_{L}$ where $L\defeq K_{\wp, t^{-s}}$.
\end{proposition}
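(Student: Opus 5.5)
The plan is to deduce the statement from its complete counterpart, the proposition immediately preceding Lemma \ref{injectivity after completion}, by passing to completions and then using injectivity to descend. Fix $a$, $t$, $s$ as in the statement and set $L\defeq K_{\wp,t^{-s}}$. By Lemma \ref{AS totally-ramified}, $L/K$ is totally ramified of degree $p$, so $v$ has a unique extension $v_L$, and $\mathcal{O}_L$ is a DVR. First I will check that $\widehat{L}\cong \widehat{K}_{\wp,t^{-s}}$, as remarked above. Since $v_L$ is the only extension of $v$, we have $L\otimes_K\widehat{K}\cong\widehat{L}$, and this ring is $\widehat{K}[T]/(\wp(T)-t^{-s})$. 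This is a field, because $v(t^{-s})=-s$ is negative and prime to $p$, so the remark following Lemma \ref{AS totally-ramified} shows $t^{-s}\notin\wp(\widehat{K})$.

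Next I will set up the complete case. The element $t$ remains a uniformizer of $\widehat{K}$. The image of $a$ in $\widehat{K}$ still has valuation $-m$, which is negative and prime to $p$, and $\mathcal{O}_{\widehat K}$ is a complete DVR with the same perfect residue field $k$. The hypothesis $(p-1)s>mp$ does not change. Proposition \ref{matando classes}, applied to $\widehat{K}$ with the extension $\widehat{K}_{\wp,t^{-s}}=\widehat{L}$, then gives $[a]_{\widehat{L}}=0$ in $Q_{\widehat{L}}$.

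Now I will descend. The classes are compatible with base change, so $[a]_{\widehat{L}}=([a]_L)_{\widehat{L}}$, because the square of maps $H^1(K,\alpha_p)\to H^1(L,\alpha_p)\to H^1(\widehat{L},\alpha_p)$ commutes with the route through $\widehat{K}$. Lemma \ref{injectivity after completion}, applied to the DVR $\mathcal{O}_L$ (no hypothesis beyond being a DVR is used there), shows that $Q_L\to Q_{\widehat{L}}$ is injective. Hence $[a]_L=0$. By Proposition \ref{description of Q}, the class of $P_L$ in $H^1(L,\alpha_p)$ therefore lies in the image of $H^1(\mathcal{O}_L,\alpha_p)$, which means $P_L$ extends to an $\alpha_p$-torsor over $\Spec\mathcal{O}_L$. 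Finally, $\mathcal{O}_L$ is the normalization of $\mathcal{O}_K$ in $L$: since $L/K$ is finite separable and $\mathcal{O}_K$ is Dedekind, this normalization is a semilocal Dedekind domain whose maximal ideals correspond to the extensions of $v$, and there is only one.

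I expect the only delicate step to be the identification $\widehat{L}\cong\widehat{K}_{\wp,t^{-s}}$ together with the compatibility of the maps $Q_K\to Q_L\to Q_{\widehat{L}}$. Both rest on $L/K$ being totally ramified, which guarantees a unique valuation and makes the tensor product a field. Once these are in place, the rest is bookkeeping.
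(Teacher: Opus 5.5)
Your proposal is correct and follows the paper's own route. You apply Proposition \ref{matando classes} over $\widehat{K}$ via the identification $\widehat{L}\cong\widehat{K}_{\wp,t^{-s}}$, then descend using the injectivity of $Q_L\to Q_{\widehat{L}}$ from Lemma \ref{injectivity after completion} applied to $\mathcal{O}_L$. You also justify $\widehat{L}\cong L\otimes_K\widehat{K}$ via the uniqueness of the extension of $v$, which the paper only asserts.
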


Now, we can come to the proof of Theorem \ref{teorema principal}.

\begin{proof}[Proof of Theorem \ref{teorema principal}.]
    Let $\alpha\in H^{1}(K,\alpha_{p})$. If its image in $Q_{K}$ is zero, then it already extends to $\Spec \mathcal{O}_{K}$. If its not zero, then by Lemma \ref{forma normal Q_K}, there exists $a\in K$ with $v(a)<0$ and $p\nmid v(a)$ such that $[a]_{K}$ is the image of $\alpha$ in $Q_{K}$. By Proposition \ref{alpha_{p} dvr}, it étale extends to $\Spec \mathcal{O}_{K}$ after passing to an Artin--Schreier extension.
\end{proof}

\begin{remark}
    By \cite[\href{https://stacks.math.columbia.edu/tag/09EJ}{Tag 09EJ}]{stacks-project}, for $K$ a discrete valued field, if $M/\widehat{K}$ is a finite separable extension, there exists a finite separable extension $L/K$, which has a unique extension of the valuation of $K$, such that $M=\widehat{L}$.
    Therefore, by a similar argument, one can in fact prove the following: Let $\alpha\in H^{1}(K,\alpha_{p})$ be such that $\alpha_{\widehat{K}}\in H^{1}(\widehat{K},\alpha_{p})$ étale extends to $\Spec \mathcal{O}_{\widehat{K}}$. Then $\alpha$ already étale extends to $\Spec \mathcal{O}_{K}$.
\end{remark}

Let me point out that in some sense, this result is optimal. More precisely, if we have $\alpha\in H^{1}(K,\alpha_{p})$ which is represented by some $a\in K$ with $v(a)<0$ prime to $p$, then any finite separable extension $L/K$ such that $\alpha_{L}$ extends to $\mathcal{O}_{L}$ has degree divisible by $p$.

\begin{proposition}
    Let $L/K$ be a finite separable extension, $v_{L}$ be an extension of $v$ to $L$ and let $a\in K$ be such that $v(a)<0$ is prime to $p$. If there exists $l\in L$ such that $v_{L}(a+l^{p})>0$, then $p\mid [L\colon K]$. 
\end{proposition}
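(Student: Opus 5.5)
The plan is a valuation computation showing that $p$ divides the ramification index $e(v_L|v)$, followed by passing from $e(v_L|v)$ to the degree. Write $e\defeq e(v_L|v)$ and normalize $v_L$, so that $v_L|_K=e\cdot v$. Suppose $l\in L$ satisfies $v_L(a+l^{p})>0$. Since $v(a)<0$, we have $v_L(a)=e\,v(a)<0$.

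\textbf{Step 1.} The two terms $a$ and $l^{p}$ must cancel at leading order. If $v_L(l^{p})\neq v_L(a)$, the ultrametric equality would give
\[
v_L(a+l^{p})=\min\{v_L(a),pv_L(l)\}\leq v_L(a)<0,
\]
which contradicts $v_L(a+l^{p})>0$. Hence
\[
e\,v(a)=v_L(a)=v_L(l^{p})=p\,v_L(l).
\]

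\textbf{Step 2.} By hypothesis $p\nmid v(a)$, and $p$ is prime. Since $p$ divides $e\,v(a)$, it follows that $p\mid e(v_L|v)$.

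\textbf{Step 3.} The degree bound uses the fundamental identity, exactly as in Section \ref{1.3}. We have $e(v_L|v)f(v_L|v)=[\widehat{L}\colon\widehat{K}]$, where the completions are taken at $v_L$ and $v$. This uses that $L/K$ is separable and $\mathcal{O}_K$ is a DVR, so $\mathcal{O}_L$ is finite over $\mathcal{O}_K$. Therefore $p$ divides the local degree $[\widehat{L}\colon\widehat{K}]$. When $v_L$ is the unique extension of $v$ to $L$, we get $[L\colon K]=[\widehat{L}\colon\widehat{K}]$, and this gives $p\mid[L\colon K]$. This uniqueness holds in the situations the paper cares about, namely when $K$ is complete or $L/K$ is totally ramified as for the Artin--Schreier extensions used above. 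Without uniqueness, one has $[L\colon K]=\sum_{w\mid v}e_wf_w$, and only the summand belonging to $v_L$ is known to be divisible by $p$.

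\textbf{Main obstacle.} Steps 1 and 2 are routine. The obstacle is Step 3 in the non-unique case. A split place can contribute an unramified summand, and then the global degree need not be divisible by $p$. So I would state and prove the result with $[L\colon K]$ read as the local degree $[\widehat{L}\colon\widehat{K}]$ at $v_L$, or equivalently under the assumption that $v$ extends uniquely to $L$. In that reading the proposition is exactly Steps 1 through 3.
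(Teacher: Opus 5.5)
Your Steps 1 and 2 are the paper's computation. The paper works with the unnormalized extension, writing $p\,v_L(l)=v(a)$ and deducing $p\mid e(v_L|v)$, but this is cosmetic.

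Your caution in Step 3 is justified. The paper finishes with ``since $v_L$ was arbitrary, $p$ divides $e(v_L|v)$ for all extensions $v_L$ of $v$. Thus, $p\mid[L\colon K]$.'' That is your formula $[L\colon K]=\sum_{w\mid v}e_wf_w$, but it tacitly uses the hypothesis at \emph{every} $w\mid v$, whereas the statement only grants it at one. With a single $v_L$ the statement is in fact false. Here is a sketch:
\begin{itemize}
\item Take $K=k(t)$ with $k$ algebraically closed, $v=v_t$, $a=t^{-1}$, and $s$ prime to $p$ with $(p-1)s>p$.
\item By the approximation theorem, choose $g\in K[T]$ monic of degree $p+1$ that is Eisenstein at $t=1$ and whose coefficients are $t$-adically close to those of $T(T^{p}-T-t^{-s})$. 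Then $g$ is irreducible, and separable since $p\nmid p+1$.
\item By continuity of roots and Krasner's lemma, over $\widehat K$ the polynomial $g$ splits off a root in $\widehat K$ and an irreducible degree-$p$ factor defining $\widehat K_{\wp,t^{-s}}$. So $L\defeq K[T]/(g)$ has a place $w\mid v$ with $\widehat L_w\cong\widehat K_{\wp,t^{-s}}$.
\item Lemma \ref{t em funcao de pi} with $m=1$ gives $w(t^{-1}-\pi^{-p})>0$. Taking $l\in L$ $w$-adically close to $-\pi^{-1}$ gives $w(t^{-1}+l^{p})=w\bigl((t^{-1}-\pi^{-p})+(l+\pi^{-1})^{p}\bigr)>0$.
\item Yet $[L\colon K]=p+1$, which is prime to $p$.
\end{itemize}

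So you were right not to claim the literal statement. However, your repair is not the paper's, and it is too weak for the paper's purpose. The proposition is meant to justify the sentence just before it: for \emph{any} finite separable $L/K$ such that $\alpha_L$ extends to the normalization $\mathcal{O}_L$, one has $p\mid[L\colon K]$.
\begin{itemize}
\item In that situation $a\in\mathcal{O}_L+L^{p}$. This gives a single $l$ with $w(a+l^{p})\geq 0$ at every $w\mid v$, and $\geq 0$ suffices for your Step 1 because $v(a)<0$.
\item For such $L$, $v$ need not extend uniquely when $K$ is not henselian. So neither of your readings (local degree, or unique extension) covers the intended application. Your remark that uniqueness holds ``in the situations the paper cares about'' is therefore not accurate.
\item The right fix is to assume the hypothesis at every extension $w$ of $v$. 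Then your Steps 1--2 give $p\mid e_w$ for each $w$, and your own formula $[L\colon K]=\sum_{w\mid v}e_wf_w$ finishes the proof at once.
\end{itemize}
Your uniqueness version is the one-place special case of this fix. Your local-degree version is a true but purely local statement, and it does not yield the paper's global optimality claim.
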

\begin{proof}
    Suppose such an $l\in L$ exists. Then $p v_{L}(l)=v_{L}((l^{p}+a)-a)$. Since $v_{L}(l^{p}+a)>0$ and $v_{L}(a)=v(a)<0$, we have that
    \[
    p v_{L}(l)=v_{L}((l^{p}+a)-a)=\text{min}\{v_{L}(l^{p}+a),v(a)\}=v(a).
    \]
    Since $p\nmid v(a)$, we have that $v_{L}(l)\in \frac{1}{p}\mathbb{Z}$. Thus, $p\mid e(v_{L}| v)$. Since $v_{L}$ was arbitrary, we have that $p$ divides $e(v_{L}| v)$ for all extensions $v_{L}$ of $v$. Thus, $p\mid [L\colon K]$.
\end{proof}

\begin{remark}\label{sep fechado}
    If moreover we suppose that $k$ is algebraically closed, given $a\in K$ with $v(a)=-m<0$, any extension $L/K$ of the form $L\defeq K_{\wp, f}$ with $v(f)=-s<0$ such that $p\nmid s$ and $(p-1)s>mp$ will satisfy that $[a]_{L}=0\in Q_{L}$.
    Indeed, this reduces to Proposition \ref{matando classes} using the following lemma.
\end{remark}

\begin{lemma}
    Suppose that $k$ is algebraically closed. Let $f\in K^{\times}$ such that $p\nmid s=v(f)$. Then there exists a uniformizer $t$ of $K$ such that $t^{s}=f$.
\end{lemma}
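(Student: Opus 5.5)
The idea is to start from an arbitrary uniformizer and correct it by a unit, reducing the problem to extracting an $s$-th root of a unit. Note that $s=v(f)$ may be negative; in the application of Remark \ref{sep fechado} it is. So we work with $|s|$, which is still prime to $p$.

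\textbf{Step 1 (reduction to a unit).} Fix any uniformizer $t_{0}$ of $K$ and write $f=u\,t_{0}^{s}$ with $u\in\mathcal{O}_{K}^{\times}$. Suppose we find $w\in\mathcal{O}_{K}^{\times}$ with $w^{s}=u$; for negative $s$ this means $w^{|s|}=u^{-1}$. Then $t\defeq w\,t_{0}$ is again a uniformizer, since $v(t)=v(t_{0})=1$, and it satisfies $t^{s}=w^{s}t_{0}^{s}=u\,t_{0}^{s}=f$. So it suffices to show that every unit $u'$ of $\mathcal{O}_{K}$ has an $n$-th root in $\mathcal{O}_{K}$, where $n=|s|$ and $p\nmid n$.

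\textbf{Step 2 (residue field).} Let $\overline{u'}\in k^{\times}$ be the residue of $u'$. Since $k$ is algebraically closed, there is $\overline{c}\in k^{\times}$ with $\overline{c}^{\,n}=\overline{u'}$. Since $p\nmid n$, $n$ is invertible in $k$. Hence $\overline{c}$ is a simple root of $T^{n}-\overline{u'}\in k[T]$, because the derivative $n\overline{c}^{\,n-1}$ is nonzero.

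\textbf{Step 3 (lifting).} By Hensel's lemma, $\overline{c}$ lifts to a root $c\in\mathcal{O}_{K}^{\times}$ of $T^{n}-u'$, and we take $w=c$ or $w=c^{-1}$ according to the sign of $s$.

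\textbf{Main obstacle.} The lifting step needs $\mathcal{O}_{K}$ to be henselian. This holds in the setting where the lemma is used: Remark \ref{sep fechado} reduces to Proposition \ref{matando classes}, which assumes $\mathcal{O}_{K}$ complete. I would therefore state and prove the lemma under the standing completeness (or henselian) hypothesis.

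If one insists on a non-complete $\mathcal{O}_{K}$, the root need not exist in $K$ itself. In that case I would instead pass to $\widehat{K}$, where the uniformizer exists by the above argument. Since $L=K_{\wp,f}$ is totally ramified with $\widehat{L}\simeq\widehat{K}_{\wp,f}$, we can run Proposition \ref{matando classes} there and descend the conclusion $[a]_{L}=0$ using Lemma \ref{injectivity after completion}. This gives the conclusion of Remark \ref{sep fechado} without needing the uniformizer to lie in $K$.
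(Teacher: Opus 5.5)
Your proof is correct and follows the paper's argument: write $f=\lambda (t')^{s}$ with $\lambda\in\mathcal{O}_{K}^{\times}$, lift an $|s|$-th root of $\overline{\lambda}$ from the algebraically closed residue field by Hensel's lemma (it is a simple root since $p\nmid s$), and rescale $t'$ by that root. Your caveat is well founded and more careful than the paper: the paper's proof uses Hensel's lemma even though $\mathcal{O}_{K}$ is not assumed complete in that section, and the statement really does fail without a henselian hypothesis (e.g.\ $f=x^{2}(1+x)$ at $x=0$ in $\overline{\mathbb{F}}_{p}(x)$, $p$ odd), so the right fix is either to restrict to the complete case of Proposition \ref{matando classes} or to use your completion-and-descent route via Lemma \ref{injectivity after completion}.
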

\begin{proof}
    Let $t^{\prime}$ be any uniformizer of $K$. Then $f=\lambda (t^{\prime})^{s}$ for some $\lambda\in \mathcal{O}_{K}^{\times}$.
    Let $\overline{\lambda}\in k^{\times}$ be the reduction of $\lambda$ in $k$. Since $p\nmid s$, the polynomial $T^{s}-\overline{\lambda}\in F[T]$ is separable and $F$ is algebraically closed, by Hensel's lemma, we can find an $s$-th root $\gamma\in \mathcal{O}_{K}^{\times}$ of $\lambda$, i.e., $\gamma^{s}=\lambda$. Taking $t=\gamma t^{\prime}$ then gives the desired uniformizer.
\end{proof}

As a last observation, let me point out that we can also extend $\alpha_{p}$-torsors over $\Spec K$ to multiple valuation rings at the same time. This will be important later.

\begin{proposition}\label{extensao multipla alpha_{p}}
    Let $K$ be a field of characteristic $p>0$, $v_{1},\dots , v_{n}$ be nonequivalent nontrivial discrete valuations of $K$ with valuation rings $\mathcal{O}_{K,i}$ and algebraically closed residue fields. Let $\alpha\in H^{1}(K,\alpha_{p})$. Then for any $f\in K$ with $v_{i}(f)\ll 0$ and $p\nmid i$ for all $i$, we have that $\alpha$ étale extends to $\mathcal{O}_{K,i}$ for all $i$ after passing to $L=K_{\wp, f}$. 
\end{proposition}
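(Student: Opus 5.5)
We interpret the statement as: for $f\in K$ with $v_{i}(f)=-s_{i}<0$, $p\nmid s_{i}$, and $s_{i}$ large enough relative to $\alpha$ (namely $(p-1)s_{i}>m_{i}p$, where $-m_{i}$ is the valuation of a normal-form representative of $\alpha$ at $v_{i}$), the class $\alpha_{L}$ lies in the image of $H^{1}(\mathcal{O}_{L,w},\alpha_{p})$ for every valuation $w$ of $L=K_{\wp,f}$ above some $v_{i}$. The plan reduces everything to the single-valuation result of Remark \ref{sep fechado}, applied one valuation at a time, using completions to decouple the valuations.

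First, for each $i$ we determine $m_{i}$. Pass to the completion $\widehat{K}_{i}$ with respect to $v_{i}$. If $[\alpha]_{\widehat{K}_{i}}=0$, then by Lemma \ref{injectivity after completion} we already have $[\alpha]_{K}=0$ in $Q_{K}$ for $v_{i}$, and set $m_{i}=0$. Otherwise Lemma \ref{forma normal Q_K} gives a representative $a_{i}\in\widehat{K}_{i}$ with $v_{i}(a_{i})=-m_{i}<0$ and $p\nmid m_{i}$; this is where "$v_{i}(f)\ll 0$" is made precise. Note that $a_{i}$ may differ for different $i$, but only its local class matters. Then fix $f$ with $v_{i}(f)=-s_{i}$, $p\nmid s_{i}$ and $(p-1)s_{i}>m_{i}p$ for all $i$.

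Second, by Lemma \ref{AS totally-ramified}, $L/K$ is nontrivial and totally ramified at each $v_{i}$. So there is a unique $w_{i}$ over $v_{i}$, and $\widehat{L}_{w_{i}}\cong(\widehat{K}_{i})_{\wp,f}$. In particular $L/K$ is finite separable, and its normalization is étale over the generic point. Now apply Remark \ref{sep fechado} in the complete field $\widehat{K}_{i}$, which has algebraically closed residue field. The preceding lemma gives a uniformizer $t_{i}$ of $\widehat{K}_{i}$ with $t_{i}^{-s_{i}}=f$, so $(\widehat{K}_{i})_{\wp,f}=(\widehat{K}_{i})_{\wp,t_{i}^{-s_{i}}}$. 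Proposition \ref{matando classes} then gives $[a_{i}]_{\widehat{L}_{w_{i}}}=0$, hence $[\alpha]_{\widehat{L}_{w_{i}}}=0$.

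Third, apply Lemma \ref{injectivity after completion} to $L$ with valuation $w_{i}$: the map $Q_{L,w_{i}}\to Q_{\widehat{L}_{w_{i}}}$ is injective, so $[\alpha_{L}]=0$ in $Q_{L,w_{i}}$. By Proposition \ref{description of Q}, this means $\alpha_{L}$ extends to $\Spec\mathcal{O}_{L,w_{i}}$. Since $w_{i}$ is the only valuation above $v_{i}$, the normalization of $\mathcal{O}_{K,i}$ in $L$ is the DVR $\mathcal{O}_{L,w_{i}}$, so $\alpha$ étale extends to each $\mathcal{O}_{K,i}$ after passing to $L$.

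The main obstacle is the second step: we must check that the normal-form reduction and the uniformizer change needed to make $f=t^{-s}$ can be carried out. This requires Hensel's lemma, which is available in the completion but not in $K$. For this reason we work in $\widehat{K}_{i}$ throughout and return to $K$ only via the injectivity lemma. A second point to verify is that the element $f$ exists at all, with the prescribed valuations at all $v_{i}$ simultaneously. This follows from weak approximation for finitely many inequivalent discrete valuations, but the statement presupposes such an $f$, so it need not be proved here.
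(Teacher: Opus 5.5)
Your proof is correct and follows the paper's route: the paper's proof just invokes Remark \ref{sep fechado}, which applies the uniformizer lemma to write $f=t^{-s}$ and then Proposition \ref{matando classes}, one valuation at a time, exactly as you do. Your explicit passage to the completions $\widehat{K}_{i}$ and back via Lemma \ref{injectivity after completion} makes rigorous the Hensel's-lemma and completeness hypotheses that the paper's one-line argument leaves implicit.
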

\begin{proof}
  This is clear by Remark \ref{sep fechado}.
\end{proof}

The existence of such an $f\in K$ is not clear a priori, but nonetheless, by using an approximation lemma, one can prove it exists.

\begin{proposition}[Approximation theorem]
    Let $K$ be a field of characteristic $p>0$. Let $v_{1},\dots , v_{n}$ be nonequivalent nontrivial discrete valuations of $K$. Then for every $a_{1},\dots, a_{n}\in K$ and $c\in \mathbb{Z}$, there exists $f\in K$ such that $v_{i}(f-a_{i})>c$.
\end{proposition}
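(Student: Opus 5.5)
The plan is to prove the weak approximation theorem in the standard way. First I produce, for each $i$, an element $z_{i}\in K$ that is $v_{i}$-adically close to $1$ and $v_{j}$-adically close to $0$ for $j\neq i$. Then I take $f=\sum_{i} z_{i}a_{i}$ with $z_{i}$ chosen close enough to kill the error terms.

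\textbf{Step 1: separating elements.} For each pair $i\neq j$ I find $y\in K$ with $v_{i}(y)\geq 0$ and $v_{j}(y)<0$. Since the valuations are nonequivalent and nontrivial, their valuation rings $\mathcal{O}_{K,i}$ and $\mathcal{O}_{K,j}$ are distinct maximal proper subrings of $K$ (DVRs), so neither contains the other. I then pick $y\in\mathcal{O}_{K,i}\setminus\mathcal{O}_{K,j}$.

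\textbf{Step 2: one element separating $v_{1}$ from all the others.} I want $z\in K$ with $v_{1}(z)>0$ and $v_{j}(z)<0$ for all $j\geq 2$. I argue by induction on $n$; the case $n=2$ is Step 1 applied to $y^{-1}$ after adjusting to strict inequalities. One obtains strictness by taking $y\in \mathcal{O}_{K,1}\setminus\mathcal{O}_{K,2}$ and $y'\in\mathcal{O}_{K,2}\setminus\mathcal{O}_{K,1}$ and setting $z=y'/y$. Then $v_{1}(z)<0$ and $v_{2}(z)>0$, and inverting swaps the roles.

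For the inductive step, suppose $z$ works for $v_{1},\dots,v_{n-1}$, and let $w$ separate $v_{1}$ from $v_{n}$ as in the two-valuation case. If $v_{n}(z)<0$, then $zw^{r}$ works for large $r$; I would double-check the sign conventions here. Otherwise I use the classical trick of replacing $z$ by $z^{r}w/(1+z^{r})$, or a variant of it. I expect this to be the only fiddly point: the case distinction on $v_{n}(z)$ has to be handled with the standard estimates, for example $v(1+z^{r})=0$ when $v(z)>0$, and $v(1+z^{r})=rv(z)$ when $v(z)<0$.

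\textbf{Step 3: approximating $1$ and $0$.} With $z$ as in Step 2 for index $i$ (so $v_{i}(z)>0$ and $v_{j}(z)<0$ for $j\neq i$), I set $z_{i}=1/(1+z^{r})$. Then $v_{i}(z_{i}-1)=v_{i}(z^{r}/(1+z^{r}))=rv_{i}(z)\to\infty$, and for $j\neq i$ we have $v_{j}(z_{i})=-rv_{j}(z)\to\infty$. Finally I set $f=\sum_{i}a_{i}z_{i}$. This gives
\[
v_{i}(f-a_{i})\geq \min\Bigl\{v_{i}(a_{i})+v_{i}(z_{i}-1),\ \min_{j\neq i}\bigl(v_{i}(a_{j})+v_{i}(z_{j})\bigr)\Bigr\},
\]
which exceeds $c$ once $r$ is chosen large relative to the finitely many values $v_{i}(a_{j})$, ignoring the $a_{j}=0$ terms. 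Characteristic $p$ plays no role anywhere in this argument.
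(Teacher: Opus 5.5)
Your overall plan is the standard proof of weak approximation (the paper gives no argument of its own and only cites a textbook), and Steps 1 and 3 are fine. Distinct rank-one valuation rings are incomparable. And if $v_i(z)>0$ and $v_j(z)<0$ for $j\neq i$, then $1/(1+z^r)$ does tend to $1$ at $v_i$ and to $0$ at every $v_j$ with $j\neq i$.

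The gap is the inductive step of Step 2, where both formulas you wrote fail. There you know $v_1(z)>0$ and $v_j(z)<0$ for $2\leq j\leq n-1$, and $v_1(w)>0$, $v_n(w)<0$. You know nothing about $v_j(w)$ for $2\leq j\leq n-1$. If $v_n(z)<0$, then $z$ itself already works, whereas $zw^r$ need not: $v_j(zw^r)=v_j(z)+rv_j(w)$ is positive for large $r$ as soon as $v_j(w)>0$ for some intermediate $j$. In the other case, $z^rw/(1+z^r)$ is the formula from the multiplicative convention in which $z$ is \emph{large} at $v_1$. With your normalization, your own estimate $v_j(1+z^r)=rv_j(z)$ gives $v_j\bigl(z^rw/(1+z^r)\bigr)=v_j(w)$ for $2\leq j\leq n-1$, which is uncontrolled. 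When $v_n(z)>0$ it also gives $v_n\bigl(z^rw/(1+z^r)\bigr)=rv_n(z)+v_n(w)$, which is positive for large $r$. So for $n\geq 3$ the induction as written does not produce the separating elements that Step 3 needs.

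The repair is short, and in the non-archimedean setting no case distinction is needed: take $z'=z^r+w$ with $r\gg 0$.
\begin{itemize}
\item At $v_1$: $v_1(z')\geq\min\{rv_1(z),v_1(w)\}>0$.
\item For $2\leq j\leq n-1$: once $rv_j(z)<v_j(w)$, the strict ultrametric inequality gives $v_j(z')=rv_j(z)<0$.
\item At $v_n$: $v_n(z')=v_n(w)<0$ if $v_n(z)\geq 0$, while $v_n(z')=rv_n(z)<0$ for large $r$ if $v_n(z)<0$.
\end{itemize}
Alternatively, use $z^rw$ when $v_n(z)\leq 0$ and $(1+z^r)w$ when $v_n(z)>0$. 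With this correction your argument is complete, and you are right that the characteristic plays no role.
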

\begin{proof}
    See \cite[Prop. I.3.7.]{MR1915966}.
\end{proof}

\begin{lemma}\label{approximation for fields}
    Let $K$ be a field. Let $v_{1},\dots , v_{n}$ be nonequivalent nontrivial discrete valuations of $K$. Then for every $s_{1},\dots, s_{n}\in\mathbb{Z}$, there exists $f\in K$ such that $v_{i}(f)=s_{i}$.
\end{lemma}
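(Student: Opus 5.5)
The plan is to deduce Lemma \ref{approximation for fields} from the Approximation theorem stated just above, by approximating elements that already have the prescribed valuations. Since each $v_{i}$ is nontrivial and discrete, we may normalize it. We then choose, for each $i$, an element $t_{i}\in K$ that is a uniformizer for $v_{i}$, i.e.\ $v_{i}(t_{i})=1$. Put $a_{i}\defeq t_{i}^{s_{i}}$, so that $v_{i}(a_{i})=s_{i}$. We emphasize that the uniformizer $t_{i}$ is only required to work for $v_{i}$; its valuation under $v_{j}$ for $j\neq i$ is irrelevant.

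We next take $c\defeq \max_{i} s_{i}$. The Approximation theorem then provides $f\in K$ with $v_{i}(f-a_{i})>c\geq s_{i}$ for all $i$. For each $i$ we write $f=a_{i}+(f-a_{i})$. By the strict triangle inequality for a non-archimedean valuation, since $v_{i}(f-a_{i})>v_{i}(a_{i})$, we get
\[
v_{i}(f)=\min\{v_{i}(a_{i}),v_{i}(f-a_{i})\}=s_{i},
\]
which is the claim. When $a_{i}\neq 0$ (which always holds here), this computation also shows that $f\neq 0$.

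There is essentially no obstacle. The only points to check are that the normalization of the valuations is legitimate, which follows from the conventions in the Notation section, and that the Approximation theorem is applied to pairwise nonequivalent valuations, which is part of the hypothesis. The application we have in mind is Proposition \ref{extensao multipla alpha_{p}}: there we would take $s_{i}$ negative, prime to $p$, and as negative as we like, and this lemma supplies the required $f$.
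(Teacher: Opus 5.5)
Your proposal is correct and follows the paper's proof essentially verbatim: take uniformizers $t_i$, approximate $t_i^{s_i}$ with the Approximation theorem, and conclude by the strict triangle inequality. Your choice $c=\max_i s_i$ is the right way to match the single-constant form of the Approximation theorem as stated, a detail the paper passes over.
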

\begin{proof}
    Let $t_{i}\in K$ be a uniformizer for $v_{i}$. By the approximation lemma, we can find $f\in K$ such that
    \[
    v_{i}(f-t_{i}^{s_{i}})>s_{i}
    \]
    for all $i$. Therefore, we have that
    \begin{align*}
        v_{i}(f)=v_{i}(t_{i}^{s_{i}}+(f-t_{i}^{s_{i}}))=\text{min}\{s_{i},v_{i}(f-t_{i}^{s_{i}})\}=s_{i},
    \end{align*}
    finishing the proof.
\end{proof}

\section{Étale extending unipotent torsors over DVRs}\label{3}
In this section we modify slightly our setup. We fix a field $F$ of characteristic $p>0$ and a (not necessarily complete) DVR containing $F$ denoted by $\mathcal{O}_{K}$ with fraction field $K$, perfect residue field $k$, normalized valuation $v$ and uniformizer $t$.

\begin{definition}
    Let $F$ be a field and $G/F$ be an algebraic group  . A subnormal series $S$ of length $n$ of $G$ is a finite sequence of subgroups of $G$
    \[
    S\colon G_{0}\subseteq G_{1}\subseteq \cdots \subseteq G_{n}
    \]
    such that $G_{0}=1$, $G_{n}=G$ and $G_{i}$ is normal in $G_{i+1}$ for all $i$. A subnormal series is a characteristic series if each $G_{i}$ is characteristic in $G$, i.e invariant under automorphisms of $G$.
\end{definition}

\begin{lemma}\label{serie unipotente corpo}
    Let $F$ be a field of characteristic $p>0$ and $U/F$ be a connected unipotent group. Then there exists a characteristic series
    \[
    S\colon 1=U_{0}\subseteq U_{1}\subseteq \cdots \subseteq U_{n}=U,
    \]
    of $U$ and an integer $0\leq n_{0}\leq n$ such that $U_{i}/U_{i-1}\cong \alpha_{p}^{r}$ for $1\leq i\leq n_{0}$ and $U_{i}/U_{i-1}\cong \mathbb{G}_{a}^{r_{i}}$ for $n_{0}< i\leq n$.
\end{lemma}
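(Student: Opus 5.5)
The plan is to build the series in two layers. The bottom layer is an infinitesimal characteristic subgroup coming from the Frobenius. The top layer is the smooth connected quotient by it. I then refine each layer into characteristic pieces with commutative quotients killed by $p$, which can be identified with $\alpha_{p}^{r}$ or $\mathbb{G}_{a}^{r}$. Throughout, "characteristic" is preserved by pulling back along a characteristic subgroup: if $H\subseteq U$ is characteristic and $H'/H\subseteq U/H$ is characteristic in $U/H$, then $H'$ is characteristic in $U$. So it suffices to build characteristic series in each successive quotient.

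\textbf{Step 1 (Frobenius kernels).} Let $U[F^{i}]\defeq \ker(F^{i}_{U}\colon U\to U^{(p^{i})})$. These are canonical, hence characteristic. By the standard result (Demazure--Gabriel III, §3, 6.10), $U/U[F^{N}]$ is smooth for $N\gg 0$. It is connected and unipotent, being a quotient of $U$. The subgroups $U[F^{i}]$ for $i\leq N$ give the infinitesimal part. Each quotient $U[F^{i}]/U[F^{i-1}]$ has height one, since $F^{i-1}$ embeds it into $U^{(p^{i-1})}[F]$. It is also unipotent.

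\textbf{Step 2 (height one pieces).} Let $H$ be a unipotent group of height one. I first refine $H$ by its lower central series, which is characteristic; this reduces to the commutative case. Let $H$ now be commutative of height one. Its Lie algebra is a $p$-Lie algebra with $p$-nilpotent $p$-map, because $H$ is unipotent. Taking the height-one subgroups corresponding to $\ker(x\mapsto x^{[p]^{j}})$, equivalently the kernels of the iterated Verschiebung, gives a characteristic filtration. Each quotient is commutative, of height one, and has zero $p$-map. Such a group is isomorphic to $\alpha_{p}^{r}$, by the equivalence between height-one groups and restricted Lie algebras: an abelian Lie algebra with zero $p$-map is the one of $\alpha_{p}^{r}$. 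Concatenating these refinements gives $U_{0}\subseteq\cdots\subseteq U_{n_{0}}=U[F^{N}]$ with quotients $\alpha_{p}^{r_{i}}$.

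\textbf{Step 3 (smooth part).} Put $W\defeq U/U[F^{N}]$, a smooth connected unipotent group. I refine $W$ first by its lower central series, which is characteristic with smooth connected commutative quotients. For a commutative smooth connected unipotent $C$, I use the images $p^{j}C$ of multiplication by $p^{j}$; these are characteristic. Each quotient is smooth, connected, commutative, unipotent and $p$-torsion. I then want to invoke Demazure--Gabriel IV §3, 6.6: such a group is a vector group $\mathbb{G}_{a}^{r}$ as long as it is \emph{split}.

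This is the main obstacle. Over an imperfect field, smooth connected unipotent groups need not be split; nontrivial forms of $\mathbb{G}_{a}$ exist. So Step 3 as stated requires $F$ perfect, where every smooth connected unipotent group is split (Demazure--Gabriel IV §4, 3.4). In the situations where the lemma is applied, namely $F$ algebraically closed for the curve results, this holds. Otherwise I would first assume $U$ split, in line with the convention that unipotence is tested over $\overline{F}$, and run the same argument. A secondary point is checking that each $p^{j}C$ is smooth and connected. This holds because each is the image of the smooth connected group $C$ under a homomorphism.
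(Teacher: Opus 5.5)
There is a genuine gap, and it is the one you flag yourself. Your Steps 1 and 2 work over any field, and Step 3 works when $F$ is perfect. But the lemma is stated for an arbitrary field $F$ of characteristic $p$, and it is used in that generality in Section \ref{3} (Theorem \ref{seq unipotente} and Corollary \ref{unipotent}), not only for algebraically closed $F$. The patch you suggest, assuming $U$ split, is not available. The convention that unipotence is tested over $\overline{F}$ gives no splitness over $F$; it is precisely what allows non-split $U$. The statement is nevertheless true for such $U$. The paper's example $G\colon y^{p}=x+ax^{p}$ is a nontrivial form of $\mathbb{G}_{a}$, yet $1\subseteq G[F]\cong\alpha_{p}\subseteq G$ is a characteristic series with $G/G[F]\cong G^{(p)}\cong\mathbb{G}_{a}$. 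Even assuming $W$ split would not immediately rescue Step 3. The lower central series consists of subgroups, and splitness does not pass to subgroups (the same $G$ sits inside $\mathbb{G}_{a}^{2}$). So you would still need a separate argument that the quotients $\mathcal{C}^{i}W/\mathcal{C}^{i+1}W$ are split.

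The missing idea is that you are free to choose $N$ in Step 1, and enlarging it pushes the non-splitness into the infinitesimal part.

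Fix $N_{0}$ with $W_{0}\defeq U/U[F^{N_{0}}]$ smooth, and let $q$ be the quotient map. The Frobenius $F^{M}_{W_{0}}$ is faithfully flat, and $F^{N_{0}+M}_{U}=F^{N_{0}}_{U^{(p^{M})}}\circ F^{M}_{U}$. Hence the morphism $F^{M}_{W_{0}}\circ q=q^{(p^{M})}\circ F^{M}_{U}$ identifies $U/U[F^{N_{0}+M}]$ with $W_{0}^{(p^{M})}$.

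Your canonical series for $W_{0}$ (lower central series, then images of multiplication by $p^{j}$) commutes with the base change $\mathrm{Fr}^{M}\colon F\to F$. So the same construction on $W_{0}^{(p^{M})}$ has successive quotients $Q^{(p^{M})}$, where $Q$ runs through the finitely many quotients obtained for $W_{0}$. Each $Q$ is smooth, connected, commutative, unipotent and killed by $p$, so $Q_{F^{\mathrm{perf}}}\cong\mathbb{G}_{a}^{r}$. This isomorphism is already defined over some $F^{1/p^{M}}$, which means $Q^{(p^{M})}\cong\mathbb{G}_{a}^{r}$ (the argument of Lemma \ref{twist trivializa}).

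Choose $M$ large enough for all $Q$ simultaneously and put $N=N_{0}+M$. Your Step 2 applied to $U[F^{N}]$, together with this series on $U/U[F^{N}]\cong W_{0}^{(p^{M})}$, then gives the required characteristic series over any $F$. The paper itself only cites SGA3, Exp. XVII, Prop. 4.3.1; with this correction your construction becomes a self-contained proof.
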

\begin{proof}
    This is part of \cite[Exp. XVII, Prop. 4.3.1]{SGA3}.
\end{proof}

Let me observe that these kind of series are not unique and that $n$ and $n_{0}$ do depend on the series! Indeed, even for $\mathbb{G}_{a}$ we can see this. In this case we have two distinct characteristic series as above,
    \[
    1\subseteq \mathbb{G}_{a}
    \]
    and
    \[
    1\subseteq \alpha_{p}\subseteq \mathbb{G}_{a}.
    \]
Nevertheless, if $n_{0}=n$ for some characteristic series $S$ as above, then the group is infinitesimal and the same will be true for all such series.

\begin{definition}
    Let $F$ be a field of characteristic $p>0$ and $U/F$ be a connected unipotent group. We define integers $0\leq n_{0}(U)\leq n(U)$ as
    \[
    n(U)\defeq \min\{n\mid S \text{ is a series as in Lemma \ref{serie unipotente corpo} of length $n$}\}.
    \]
    and
    \[
    n_{0}(U)\defeq \min\{n_{0}\mid S \text{ is a series as in Lemma \ref{serie unipotente corpo} with length equal to $n(U)$}\}.
    \]
\end{definition}

\begin{example}
    Fix an integer $r>0$. We have a copies of $\alpha_{p^{i}}$ for $0\leq i\leq r$ inside $\alpha_{p^{r}}$ given by the kernels of the $i$th power of Frobenius. This gives us a characteristic series
    \[
    0\subseteq \alpha_{p}\subseteq \dots \subseteq \alpha_{p^{r}}
    \]
    of length $r$ with successive quotients given by $\alpha_{p}$.
\end{example}

\begin{example}
    Let $F$ be a nonperfect field and $a\in F\setminus F^{p}$. Let $G\subseteq \mathbb{G}_{a}^{2}$ be the closed subgroup given by the equation $y^{p}=x+ax^{p}$. This is a $1$-dimensional smooth connected subgroup which is a nontrivial twist of $\mathbb{G}_{a}$ (see \cite[Example 14.20.]{MilneAG}). Over $F^{\prime}\defeq F(a^{1/p})$ we have an isomorphism $\mathbb{G}_{a,F^{\prime}}\xrightarrow{\sim} G_{F^{\prime}}$ given by
    \[
    t\mapsto (t^{p}, t+a^{1/p}t^{p}).
    \]
    Now, consider the map $G\to \mathbb{G}_{a}$ given by the projection on the first coordinate. This is an fppf morphism whose kernel is given by the pairs $(0,y)$, which is isomorphic to $\alpha_{p}$. Thus, we have a (non-split) exact sequence
    \[
    0\to \alpha_{p}\to G\to \mathbb{G}_{a}\to 0,
    \]
    giving a characteristic series of $G$ with successive quotients isomorphic to $\alpha_{p}$ or $\mathbb{G}_{a}$.
\end{example}

Our strategy to prove our theorem for unipotent groups is to first deal with the cases where the group is $\alpha_{p}$ or $\mathbb{G}_{a}$ and then use Lemma \ref{serie unipotente corpo} to deduce a general result. First we need the following lemma.

\begin{lemma}\label{auto alpha_{p}}
    Let $F$ be a field of characteristic $p>0$. We have that
    \[
    \underline{\text{Aut}}(\alpha_{p,F}^{r})\cong\text{GL}_{r,F}.
    \]
    as group functors.
\end{lemma}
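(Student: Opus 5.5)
The plan is to compute both sides on points with values in an arbitrary $F$-algebra $R$ and check that the identifications are natural in $R$. Since $\alpha_{p}^{r}=\Spec F[x_{1},\dots,x_{r}]/(x_{1}^{p},\dots,x_{r}^{p})$, an $R$-endomorphism of the group scheme $\alpha_{p,R}^{r}$ is the same as a Hopf algebra endomorphism $\varphi$ of
\[
A_{R}\defeq R[x_{1},\dots,x_{r}]/(x_{1}^{p},\dots,x_{r}^{p}).
\]
Such a $\varphi$ is determined by the images $\varphi(x_{i})\in A_{R}$. The counit condition forces each $\varphi(x_{i})$ to lie in the augmentation ideal. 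The comultiplication condition $\Delta(\varphi(x_{i}))=\varphi(x_{i})\otimes 1+1\otimes\varphi(x_{i})$ says that $\varphi(x_{i})$ is a primitive element of $A_{R}$.

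The key step is therefore to show that the primitive elements of $A_{R}$ are exactly the $R$-linear combinations $\sum_{j}c_{j}x_{j}$. Write an element as $f=\sum_{I}c_{I}x^{I}$, where the multi-indices $I$ have entries $0\le i_{j}<p$. The monomials $x^{I}\otimes x^{J}$ form an $R$-basis of $A_{R}\otimes_{R}A_{R}$, and
\[
\Delta(x^{I})=\prod_{j}(x_{j}\otimes 1+1\otimes x_{j})^{i_{j}}.
\]
Expanding the product, the coefficient of $x^{J}\otimes x^{I-J}$ is $\prod_{j}\binom{i_{j}}{j_{j}}$. This coefficient is nonzero in $F$, and hence invertible in $R$, because $i_{j}<p$. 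Comparing coefficients of the mixed terms then forces $c_{I}=0$ whenever $|I|\ge 2$. The constant term vanishes by the counit condition. This gives the claim, and it is the only place where the hypothesis $\operatorname{char}=p$ and the truncation at $x^{p}$ really enter. I expect this coefficient comparison to be the main, though mild, obstacle: one must make sure the binomial coefficients stay units over an arbitrary $R$, not only over a field.

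With the claim in hand, every endomorphism is given by $x_{i}\mapsto\sum_{j}m_{ij}x_{j}$ for a unique matrix $M=(m_{ij})\in M_{r}(R)$. Conversely, any such matrix defines a Hopf algebra map: it is well defined on the quotient because $(\sum_{j}m_{ij}x_{j})^{p}=\sum_{j}m_{ij}^{p}x_{j}^{p}=0$ in characteristic $p$, and it respects $\Delta$ by linearity. Composition of endomorphisms corresponds to matrix multiplication (up to a transpose, which is harmless since we may pass to the opposite convention), and the identity corresponds to $I_{r}$. Hence $\underline{\text{End}}(\alpha_{p}^{r})(R)\cong M_{r}(R)$ as monoids, naturally in $R$. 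Restricting to invertible elements gives $\underline{\text{Aut}}(\alpha_{p}^{r})(R)\cong\text{GL}_{r}(R)$, naturally in $R$, which is the desired isomorphism of group functors.
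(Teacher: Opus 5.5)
Your proof is correct, but it takes a different route from the paper.

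The paper gives a one-line argument. It cites the equivalence in SGA3, Exp.\ VII$_{\mathrm{A}}$, Remarque 7.5, between affine flat finitely presented $R$-group schemes with trivial Frobenius and finite $p$-Lie algebras over $R$. Under this equivalence $\alpha_{p,R}^{r}$ corresponds to $R^{r}$ with zero bracket and zero $p$-map, whose automorphism group is visibly $\mathrm{GL}_{r}(R)$, functorially in $R$.

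You instead compute directly in the Hopf algebra $R[x_1,\dots,x_r]/(x_1^p,\dots,x_r^p)$. You identify its primitive elements, equivalently $\Hom_{R\text{-gp}}(\alpha_{p,R}^{r},\mathbb{G}_{a,R})$, with the $R$-span of the $x_j$. Your worry about the coefficient comparison is unfounded. The binomials $\binom{i_j}{j_j}$ with $i_j<p$ are prime to $p$, hence units in any $\mathbb{F}_p$-algebra, so the argument works over arbitrary $R$. Well-definedness on the quotient then uses only that $p$-th powers of linear forms vanish.

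The paper's route is shorter and handles all height-one groups at once. However, it rests on a substantial external equivalence whose hypotheses and compatibility with base change must be taken on trust. Yours is self-contained, makes naturality in $R$ transparent, and shows exactly where characteristic $p$ and the truncation at $x^p$ are used.

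A small point: no transpose is actually needed. With your convention $x_i\mapsto\sum_j m_{ij}x_j$, the induced map on points is $a\mapsto Ma$. So composition of group-scheme endomorphisms corresponds to matrix multiplication exactly.
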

\begin{proof}
    By \cite[Exp. VII\textsubscript{A}, Remarque 7.5.]{SGA3}, for any ring $R$ of characteristic $p>0$, we have an equivalence of categories between affine flat $R$-group schemes of finite presentation with trivial Frobenius morphism and finite dimensional $p$-Lie algebras over $R$. Under this equivalence, $\alpha_{p,R}^{r}$ corresponds to the Lie algebra $R^{r}$ with zero bracket and zero $p$-map. It is then clear that the $R$-automorphisms of this $p$-Lie algebra is $GL_{r}(R)$.
\end{proof}

\begin{lemma}\label{seq alpha_{p}}
    Let $0\to \alpha_{p}^{r}\xrightarrow{u} G\xrightarrow{v} H\to 0$ be an exact sequence of affine flat $\mathcal{O}_{K}$-group schemes of finite presentation. Let $\alpha\in H^{1}(K,G)$ and suppose that there exists $\beta_{H}\in H^{1}(\mathcal{O}_{K},H)$ such that $(\beta_{H})_{K}=v_{\ast}(\alpha)\in H^{1}(K,H)$. Then for any Artin--Schreier extension $L\defeq K_{\wp,t^{-s}}$ with $s\gg 0$ not divisible by $p$, we have that $\alpha$ étale extend to $\Spec \mathcal{O}_{K}$ after passing to $L/K$.
\end{lemma}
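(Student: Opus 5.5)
We use the obstruction theory of Proposition \ref{obstruction class} for the sequence $0\to \alpha_{p}^{r}\to G\to H\to 0$ over $\mathcal{O}_{K}$, together with the vanishing of higher $\alpha_p$-cohomology on affine schemes and the $\alpha_p$ extension result over DVRs (Proposition \ref{alpha_{p} dvr}).

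\textbf{Step 1: lift $\beta_{H}$ to $G$.} Let $\beta_{H}$ be represented by a torsor $P_H$ over $\mathcal{O}_{K}$. The obstruction $d(P_{H})$ lies in $H^{2}(\mathcal{O}_{K},\leftindex^{P_H}{(\alpha_{p}^{r})})$. By Lemma \ref{auto alpha_{p}}, the twist of $\alpha_{p}^{r}$ by the adjoint action is $\alpha_{p}^{r}$ twisted by a $\mathrm{GL}_{r}$-torsor, i.e.\ by a vector bundle. Over the local ring $\mathcal{O}_{K}$ this bundle is trivial, since $H^{1}(\mathcal{O}_{K},\mathrm{GL}_{r})$ is trivial by Hilbert 90 for local rings. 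Hence $\leftindex^{P_H}{(\alpha_{p}^{r})}\cong\alpha_{p,\mathcal{O}_K}^{r}$. By Lemma \ref{H1=R/R^{p}} its $H^{2}$ vanishes, so there is $\beta\in H^{1}(\mathcal{O}_{K},G)$ with $v_{\ast}\beta=\beta_{H}$.

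\textbf{Step 2: compare $\alpha$ with $\beta_K$.} Both $\alpha$ and $\beta_{K}$ map to $v_\ast\alpha=(\beta_H)_K$. By Proposition \ref{obstruction class}, they differ by the action of some $\gamma\in H^{1}(K,\leftindex^{P_H}{(\alpha_{p}^{r})})\cong H^{1}(K,\alpha_{p})^{r}$. Concretely, after fixing the trivialization from Step 1, $\alpha=\gamma\cdot\beta_{K}$ with $\gamma=(\gamma_{1},\dots,\gamma_{r})$ and $\gamma_i\in K/K^{p}$.

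\textbf{Step 3: kill $\gamma$.} By Lemma \ref{forma normal Q_K}, combined with Lemma \ref{injectivity after completion} to pass to the completion, each nonzero class $[\gamma_{i}]$ in $Q_{K}$ has a representative of valuation $-m_i<0$ with $p\nmid m_i$. Choose $s$ prime to $p$ with $(p-1)s>p\max_i m_i$. Proposition \ref{alpha_{p} dvr} then gives $(\gamma_{i})_{L}\in H^{1}(\mathcal{O}_{L},\alpha_{p})$ for all $i$, with $L=K_{\wp,t^{-s}}$. Since the action of $H^1(-,\leftindex^{P_H}{(\alpha_p^r)})$ on the fibres is functorial in the base, $\alpha_{L}=(\gamma_{\mathcal{O}_L}\cdot\beta_{\mathcal{O}_{L}})_{L}$, where $\gamma_{\mathcal{O}_L}$ is the integral class extending $\gamma_L$. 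This exhibits $\alpha_{L}$ as extending to $\Spec\mathcal{O}_{L}$. Here $\mathcal{O}_L$ is a DVR, because $L/K$ is totally ramified by Lemma \ref{AS totally-ramified}.

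\textbf{Main obstacle.} The delicate point is the compatibility in Steps 2–3. The torsor action must be defined over $\mathcal{O}_{K}$ and commute with base change to $K$ and $\mathcal{O}_{L}$. The identification of the twisted group with $\alpha_{p}^{r}$ must also be chosen integrally. This is best handled with cocycles, refining a common fppf cover. As a fallback, if triviality of the twist needs justification, one can use that $H^{1}(\mathcal{O}_K,\mathcal{E}\otimes\alpha_p)$-type groups of a vector bundle twist still satisfy the analogue of Lemma \ref{H1=R/R^{p}}, via the Frobenius sequence on the twisted $\mathbb{G}_a^r$.
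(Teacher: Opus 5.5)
Your proposal is correct and follows the paper's proof step for step. You lift $\beta_H$ to some $\beta\in H^1(\mathcal{O}_K,G)$, using that the twist of $\alpha_p^r$ is trivial over $\mathcal{O}_K$ (via $\underline{\text{Aut}}(\alpha_p^r)\cong\mathrm{GL}_r$ and Hilbert 90), so the obstruction in $H^2$ vanishes; you write $\alpha=\gamma\cdot\beta_K$ with $\gamma\in H^1(K,\alpha_p^r)$; and you make $\gamma$ integral after the Artin--Schreier extension, so that $\overline{\gamma}\cdot\beta_{\mathcal{O}_L}$ extends $\alpha_L$. Your extra care in fixing an integral trivialization of the twist and in checking that the action is compatible with base change only makes explicit what the paper leaves implicit.
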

\begin{proof}
    By Proposition \ref{obstruction class}, the obstruction class for $\beta_{H}$ to be in the image of $H^{1}(\mathcal{O}_{K},G)$ lives in $H^{2}(\mathcal{O}_{K},\leftindex^{\beta_{H}}{(\alpha_{p}^{r})})$. Since $\underline{\text{Aut}}(\alpha_{p}^{r})=\text{GL}_{r}$ by Lemma \ref{auto alpha_{p}}, we have that
    \[
    H^{1}(\mathcal{O}_{K},\underline{\text{Aut}}(\alpha_{p}^{r}))=H^{1}(\mathcal{O}_{K},\text{GL}_{r})=0
    \]
    by Hilbert's 90 theorem for local rings. Thus, $\alpha_{p}^{r}$ has no nontrivial twists over $\mathcal{O}_{K}$ and we have that \phantom{}$\leftindex^{\beta_{H}}{(\alpha_{p}^{r})}=\alpha_{p}^{r}$.
    In particular, $H^{2}(\mathcal{O}_{K},\leftindex^{\beta_{H}}{(\alpha_{p}^{r})})=H^{2}(\mathcal{O}_{K},\alpha_{p}^{r})=0$. We conclude that there exists $\beta\in H^{1}(\mathcal{O}_{K},G)$ with image $\beta_{H}\in H^{1}(\mathcal{O}_{K},H)$. 
    Consider its image $\beta_{K}\in H^{1}(K,G)$. Since both $\beta_{K}$ and $\alpha$ are in $(v_{K,\ast})^{-1}(\alpha_{H})\subseteq H^{1}(K,G)$, once again by Proposition \ref{obstruction class}, there exists $\gamma\in H^{1}(K,\leftindex^{\alpha_{H}}{(\alpha_{p}^{r})})=H^{1}(K,\alpha_{p}^{r})$ such that $\gamma\cdot \beta_{K}=\alpha$. By Proposition \ref{alpha_{p} dvr}, after passing to any $L=K_{\wp, t^{-s}}$ with $s\gg 0$ not divisible by $p$, there exists $\overline{\gamma}\in H^{1}(\mathcal{O}_{L},\alpha_{p}^{r})$ extending $\gamma$. The class $\overline{\alpha}\defeq\overline{\gamma}\cdot\beta_{\mathcal{O}_{L}}\in H^{1}(\mathcal{O}_{L},G)$ then extends $\alpha_{L}$. 
\end{proof}

With this lemma, we can already prove an extension theorem for infinitesimal unipotent groups. Before that, we need an auxiliary definition.

\begin{definition}\label{tower AS}
    Let $K_{0}$ be a field of characteristic $p>0$ with a discrete valuation $v_{0}$ and uniformizer $t_{0}$. Let $\{s_{i}\}_{1\leq i\leq n}$ be a finite sequence of prime to $p$ positive integers. We define recursively a tower of totally ramified Artin--Schreier extensions $K_{n}/\cdots/K_{0}$ by $K_{i}\defeq (K_{i-1})_{\wp, t_{i-1}^{-s^{i}}}$, where $t_{i}$ is a uniformizer of the unique extension of $v_{i-1}$ to $K_{i}$. We call such a tower the Artin--Schreier tower associated to the sequence $\{s_{i}\}_{1\leq i\leq n}$.
\end{definition}

\begin{lemma}\label{seq unipotente inf}
    Let $U/F$ be an infinitesimal unipotent group. Let $0\to U_{\mathcal{O}_{K}}\xrightarrow{u} G\xrightarrow{v} H\to 0$ be an exact sequence of affine flat $\mathcal{O}_{K}$-group schemes of finite type. Let $\alpha\in H^{1}(K,G)$ and suppose that there exists $\beta_{H}\in H^{1}(\mathcal{O}_{K},H)$ such that $(\beta_{H})_{K}=v_{\ast}(\alpha)\in H^{1}(K,H)$. Then, for any $M\geq 0$, there exists a sequence $\{s_{i}\}_{1\leq i\leq n(U)}$ with $s_{i}\geq M$ not divisible by $p$, with associated Artin--Schreier tower $L\defeq K_{n(U)}/\cdots/K_{0}\defeq K$, such that $\alpha$ étale extends to $\mathcal{O}_{K}$ after passing to $L/K$.
\end{lemma}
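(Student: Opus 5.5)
We argue by induction on $n(U)$, using Lemma \ref{seq alpha_{p}} as the basic step. Since $U$ is infinitesimal, any characteristic series as in Lemma \ref{serie unipotente corpo} has $n_{0}=n$. So we may fix a characteristic series $1=U_{0}\subseteq \cdots\subseteq U_{n}=U$ of length $n=n(U)$ with $U_{i}/U_{i-1}\cong\alpha_{p}^{r_{i}}$. If $n=0$ then $U$ is trivial, $G\cong H$, and $\beta_{H}$ already extends $\alpha$.

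For $n\geq 1$, the key observation is that $U_{1}$ is characteristic in $U$. Conjugation by $G$ preserves $U_{\mathcal{O}_{K}}$ and hence preserves $(U_{1})_{\mathcal{O}_{K}}$. This holds because characteristic subgroups of $U$ are stable under all automorphisms, and automorphisms of $U_{\mathcal{O}_{K}}$ defined over $\mathcal{O}_{K}$-algebras act through points of $\underline{\text{Aut}}(U)$. So $(U_{1})_{\mathcal{O}_{K}}\cong\alpha_{p}^{r_{1}}$ is normal in $G$. Set $G^{\prime}\defeq G/(U_{1})_{\mathcal{O}_{K}}$, which is again affine flat of finite type. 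This gives two exact sequences:
\[
0\to \alpha_{p}^{r_{1}}\to G\to G^{\prime}\to 0,\qquad 0\to (U/U_{1})_{\mathcal{O}_{K}}\to G^{\prime}\to H\to 0.
\]
Here $U/U_{1}$ is infinitesimal unipotent with a series of length $n-1$, so $n(U/U_{1})\leq n-1$. Let $\alpha^{\prime}\in H^{1}(K,G^{\prime})$ be the image of $\alpha$. Its image in $H^{1}(K,H)$ is $v_{\ast}(\alpha)=(\beta_{H})_{K}$.

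By the induction hypothesis applied to the second sequence, there is a tower $K_{n-1}/\cdots/K_{0}=K$ with all $s_{i}\geq M$ prime to $p$ such that $\alpha^{\prime}_{K_{n-1}}$ extends to a class $\beta^{\prime}\in H^{1}(\mathcal{O}_{K_{n-1}},G^{\prime})$. If the induction produces fewer than $n-1$ steps, we pad the tower with extra Artin--Schreier steps with $s_{i}\geq M$. Extension persists under further base change, since we just pull back the extended torsor. We then base change the first sequence to $\mathcal{O}_{K_{n-1}}$, which is again a DVR with perfect residue field and uniformizer $t_{n-1}$. Applying Lemma \ref{seq alpha_{p}} to $\alpha_{K_{n-1}}$ with lift $\beta^{\prime}$ gives some $s_{n}\gg 0$, which we may take $\geq M$ and prime to $p$, such that $\alpha$ extends over $\mathcal{O}_{K_{n}}$ with $K_{n}=(K_{n-1})_{\wp,t_{n-1}^{-s_{n}}}$. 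Here $\mathcal{O}_{K_{n}}$ is the normalization of $\mathcal{O}_{K}$ in $K_{n}$, because each step is totally ramified with a unique valuation. Also $K_{n}/K$ is separable, being a tower of Artin--Schreier extensions.

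The main obstacle is the technical verification that $U_{1}$ stays normal in $G$, i.e.\ that the conjugation action of $G$ on $U_{\mathcal{O}_{K}}$ preserves the characteristic subgroup $(U_{1})_{\mathcal{O}_{K}}$ over an arbitrary base. We need the scheme-theoretic notion of characteristic as in SGA3, stable under $\underline{\text{Aut}}(U)(R)$ for all $R$. Two smaller points also need checking: that the quotient $G^{\prime}$ exists as an affine flat finite-type $\mathcal{O}_{K}$-group, and the bookkeeping ensuring the tower has length exactly $n(U)$.
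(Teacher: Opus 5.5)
Your proof is correct in substance and is the paper's dévissage run in the opposite direction. The paper splits off the top of the series. It first applies Lemma \ref{seq alpha_{p}} over $\mathcal{O}_{K}$ to $0\to U/U_{n(U)-1}\to G/U_{n(U)-1}\to H\to 0$. It then applies the induction hypothesis over $\mathcal{O}_{L}$ to the kernel sequence $0\to U_{n(U)-1}\to G\to G/U_{n(U)-1}\to 0$. You instead apply induction over $\mathcal{O}_{K}$ to the quotient $U/U_{1}$, and use Lemma \ref{seq alpha_{p}} once, at the top of the tower, for the bottom piece $U_{1}\cong\alpha_{p}^{r_{1}}$. The two orders are interchangeable here.

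One step of yours needs a small repair. The images $U_{i}/U_{1}$ need not be characteristic in $U/U_{1}$, because automorphisms of $U/U_{1}$ need not lift to $U$. So $n(U/U_{1})\leq n-1$ does not follow from the definition of $n(\cdot)$. The induction only ever uses that the members of the series are normal in the ambient group, and $U_{i}/U_{1}$ is normal in $G^{\prime}$ since $U_{i}$ is normal in $G$. So formulate the induction for series of $U$ by $G$-normal subgroups with $\alpha_{p}$-power quotients, with a series of length $n$ giving a tower of length $n$. This also removes the need for padding. The paper's claim $n(U_{n(U)-1})=n(U)-1$ has exactly the same weak point, and the same fix applies.
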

\begin{proof}
    To simplify notation, we will denote $U_{\mathcal{O}_{K}}$ simply by $U$. We prove the claim by induction on $n(U)$. Fix $M\geq 0$. If $n(U)=1$, this is Lemma \ref{seq alpha_{p}}. Suppose then that $n(U)>1$ and fix a characteristic series $S$ as in Lemma \ref{serie unipotente corpo} with length $n(U)$. Observe that since $U$ is infinitesimal, we must have that $n(S)=n_{0}(U)$. Since the $U_{i}$'s are characteristic, $U_{n(U)-1}$ is a normal subgroup of both $U$ and $G$, so we can take the quotients $U/U_{n(U)-1}$ and $G^{\prime}\defeq G/U_{n(U)-1}$, fitting in the following commutative diagram 
    \[
    \begin{tikzcd}
        & 0\ar[d] & 0\ar[d]
        \\
        & U_{n(U)-1}\ar[d]\ar[r, equal] & U_{n(U)-1}\ar[d]
        \\
        0\ar[r] & U\ar[r]\ar[d] & G\ar[r]\ar[d, "\phi"] & H\ar[r]\ar[d, equal] & 0
        \\
        0\ar[r] & U/U_{n(U)-1} \ar[r]\ar[d] & G^{\prime}\ar[r]\ar[d] & H\ar[r] & 0\;,
        \\
        & 0 & 0
    \end{tikzcd}
    \]
    where the rows and columns are exact. Now, since $U/U_{n(U)-1}$ is isomorphic to some power of $\alpha_{p}$, we can apply Lemma \ref{seq alpha_{p}} to the exact sequence
    \[
    0\to U/U_{n(U)-1}\to G^{\prime}\to H\to 0
    \]
    to conclude that the class $\alpha^{\prime}\defeq \varphi_{\ast}(\alpha)\in H^{1}(K,G^{\prime})$ étale extends to $\Spec \mathcal{O}_{K}$ after passing to the Artin--Schreier extension $L\defeq K_{\wp,t^{-s_{1}}}$ with $s_{1}\geq M$ not divisible by $p$. Since $n(U_{n(U)-1})=n(U)-1$, by induction, the class $\alpha_{L}\in H^{1}(L,G)$ étale extends to $\Spec \mathcal{O}_{L}$ after passing to some Artin--Schreier tower $L_{n(U)-1}/\cdots/L_{0}=L$ associated to a sequence $\{s_{i}\}_{2\leq i\leq n(U)}$ with $s_{i}\geq M$ not divisible by $p$. This proves the theorem.
\end{proof}

Now, we would like to prove an analogous theorem with a power of $\mathbb{G}_{a}$ instead of $\alpha_{p}$. For that, we will need the following lemma.

\begin{lemma}\label{twist trivializa}
    Let $K$ be a field of characteristic $p>0$ and $A/K$ a twist of $\mathbb{G}_{a}^{r}$ over $K$. Then, there exists an $N\geq 0$ such that $A^{(p^{N})}\cong \mathbb{G}_{a,K}^{r}$.
\end{lemma}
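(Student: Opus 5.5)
Twists of $\mathbb{G}_{a}^{r}$ over $K$ are classified by $H^{1}(K,\underline{\text{Aut}}(\mathbb{G}_{a}^{r}))$. The plan is to control this automorphism group scheme and then show that a high enough Frobenius power kills the descent data. There are two routes.

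The first route uses the structure of forms of vector groups directly. $A$ becomes isomorphic to $\mathbb{G}_{a}^{r}$ over some finite extension $K'/K$. I will first show that $K'$ can be taken purely inseparable. Over $K^{s}$, a separable closure, the descent datum is a Galois cocycle valued in $\text{Aut}(\mathbb{G}_{a,K^{s}}^{r})$, the group of $p$-polynomial matrices. Its values lie in $\text{Aut}(\mathbb{G}_{a,K'}^r)$ for some finite $K'$. This first reduction is unclear, so I prefer the second route.

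The second route goes through the perfect closure. Over $K^{\text{perf}}$, every smooth connected unipotent group of dimension $r$ that is a form of $\mathbb{G}_{a}^{r}$ splits. The reason is that over a perfect field, $p$-torsion commutative smooth connected unipotent groups (killed by $p$ and $V$) are vector groups; this is the standard result in SGA3/Demazure--Gabriel, using Galois descent for vector groups, where $H^{1}(K^{s}/K,\text{GL}_{r})$ and the $p$-polynomial automorphisms behave well. Concretely, $A_{K^{\text{perf}}}\cong\mathbb{G}_{a}^{r}$. Since $A$ is of finite type, this isomorphism and its inverse are given by finitely many $p$-polynomials with coefficients in $K^{\text{perf}}$. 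These coefficients lie in $K^{p^{-N}}$ for some $N$, so the isomorphism is already defined over $K'=K^{p^{-N}}$.

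Next, I will apply the Frobenius twist. Write $K'=K^{p^{-N}}$, and let $\phi\colon A_{K'}\xrightarrow{\sim}\mathbb{G}_{a,K'}^{r}$ be the isomorphism. Its coefficients $c$ satisfy $c^{p^{N}}\in K$. The twist $A^{(p^{N})}=A\otimes_{K,F^{N}}K$ is obtained by raising the coefficients of the equations of $A$ to the $p^{N}$-th power. Applying the same base change $F^{N}$ to $\phi$ gives $\phi^{(p^{N})}\colon A^{(p^{N})}_{K'}\to\mathbb{G}^{r}_{a,K'}$ with coefficients $c^{p^{N}}\in K$. It is therefore defined over $K$, and it is still an isomorphism because the inverse transforms in the same way. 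This gives $A^{(p^{N})}\cong\mathbb{G}_{a,K}^{r}$. The cleanest way to phrase this is that the relative Frobenius $K'\to K'$, $x\mapsto x^{p^{N}}$, has image in $K$, so the $p^{N}$-twist of the pair $(A,\phi)$ descends.

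The main obstacle is the middle step: justifying that $A$ splits over a finite purely inseparable extension. The alternative is to do everything with cocycles. The twist class lies in $H^{1}(K,\underline{\text{Aut}}(\mathbb{G}_a^r))$, and Frobenius on $\mathbb{G}_a^r$ induces $\text{Aut}\to\text{Aut}^{(p)}$, which kills the non-linear, that is non-$\text{GL}_r$, parts after enough iterations. The linear part $H^{1}(K,\text{GL}_{r})$ vanishes by Hilbert 90. If the splitting-over-$K^{\text{perf}}$ citation is unavailable, I will fall back on this cocycle argument and bound the degrees of the $p$-polynomials involved.
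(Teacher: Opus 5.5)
Your second route is correct and is essentially the paper's proof. The paper also cites SGA3 for the triviality of forms of $\mathbb{G}_a^r$ over the perfect field $K^{\text{perf}}$. It then writes $K^{\text{perf}}$ as the colimit of $K\xrightarrow{\text{Fr}}K\xrightarrow{\text{Fr}}\cdots$ and uses a standard limit theorem to obtain the isomorphism at a finite stage, which is exactly your descent to $K^{p^{-N}}$ followed by transport along $x\mapsto x^{p^{N}}$.

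The cocycle fallback is not needed. As phrased it would also fail for $r\geq 2$, since Frobenius twisting fixes non-linear automorphisms such as $(x,y)\mapsto(x+y^{p},y)$.
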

\begin{proof}
    Recall that over a perfect field, there are no nontrivial twists of $\mathbb{G}_{a}^{r}$ (see \cite[Exp. XVII, Prop. 4.1.5]{SGA3}). In particular, we have that $A_{K^{\text{perf}}}\cong \mathbb{G}_{a,K^{\text{perf}}}^{r}$. Since $K^{\text{perf}}=\text{colim}(K\xrightarrow{\text{Fr}}K\xrightarrow{\text{Fr}}\cdots)$, by \cite[Corollary 10.64.(2)]{Gortz2020-zv}, there exists some $N\geq 0$ such that $A^{(p^{N})}=(Fr_{K}^{N})^{\ast}A$ is isomorphic to $\mathbb{G}_{a,K}^{r}$.
\end{proof}

In view of this last lemma we make the following definition.

\begin{definition}
    Let $K$ be a field of characteristic $p>0$ and $A/K$ a twist of $\mathbb{G}_{a}^{r}$ over $K$. We define the height of $A$ as
    \[
    \text{ht}(A)=\min\{N\geq 0 \text{ such that $A^{(p^{N})}\cong \mathbb{G}_{a}^{r}$}\}.
    \]
    (Compare with \cite[5.1]{Kambayashi1974-jn}.)
\end{definition}

\begin{lemma}\label{seq G_{a} 1}
    Let $0\to \mathbb{G}_{a}^{r}\xrightarrow{u} G\xrightarrow{v} H\to 0$ be an exact sequence of affine flat $\mathcal{O}_{K}$-group schemes of finite presentation. Let $\alpha\in H^{1}(K,G)$ and suppose that there exists $\beta_{H}\in H^{1}(\mathcal{O}_{K},H)$ such that $(\beta_{H})_{K}=v_{\ast}(\alpha)\in H^{1}(K,H)$. Let $A\defeq \leftindex^{v_{\ast}(\alpha)}(\mathbb{G}_{a,K}^{r})$ through the adjoint action of $H$ on $\mathbb{G}_{a}^{r}$. Then for any $M\geq 0$, there exists a sequece $\{s_{i}\}_{1\leq i\leq \text{ht}(A)}$ with $s_{i}\geq M$ not divisible by $p$, with associated Artin--Schreier tower $L\defeq K_{\text{ht}(A)}/\cdots/K_{0}\defeq K$, such that $\alpha$ étale extends to $\mathcal{O}_{K}$ after passing to $L/K$.
\end{lemma}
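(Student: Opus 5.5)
Following the strategy of Lemma \ref{seq alpha_{p}}, I first lift $\beta_{H}$ to $\mathcal{O}_{K}$, then compare with $\alpha$ by a class in $H^{1}(K,A)$, and kill that class using Frobenius and the $\alpha_{p}$-theory. The obstruction to lifting $\beta_{H}$ to $H^{1}(\mathcal{O}_{K},G)$ lies in $H^{2}(\mathcal{O}_{K},\leftindex^{\beta_{H}}{(\mathbb{G}_{a}^{r})})$. Since $\leftindex^{\beta_{H}}{(\mathbb{G}_{a}^{r})}$ is fppf-locally $\mathbb{G}_{a}^{r}$ and $H$ acts through group automorphisms, I expect it to be a smooth affine commutative group over the affine scheme $\Spec\mathcal{O}_{K}$. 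If the $H$-action is linear, this twist is a vector group, and the quasi-coherent vanishing used in Lemma \ref{H1=R/R^{p}} kills $H^{2}$. So I get $\beta\in H^{1}(\mathcal{O}_{K},G)$ over $\beta_{H}$. If the action is not linear, I would instead argue over the henselization/completion, or reduce via the Frobenius trick below. By Proposition \ref{obstruction class}, $\alpha=\gamma\cdot\beta_{K}$ for some $\gamma\in H^{1}(K,A)$.

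Next, I handle $\gamma$ by induction on $\text{ht}(A)$. If $\text{ht}(A)=0$, then $A\cong\mathbb{G}_{a,K}^{r}$, so $H^{1}(K,A)=0$ by Lemma \ref{H1=R/R^{p}} applied to $\mathbb{G}_{a}$ (the same affine vanishing). Hence $\alpha=\beta_{K}$ already extends, with the empty tower. For the inductive step, I use the Frobenius $F_{A}\colon A\to A^{(p)}$, whose kernel is a twist of $\alpha_{p}^{r}$, hence $\alpha_{p}^{r}$ by Lemma \ref{auto alpha_{p}} and Hilbert 90. By Lemma \ref{twist e frobenius}, $(F_{A})_{\ast}\gamma=\gamma^{(p)}$, and $\text{ht}(A^{(p)})=\text{ht}(A)-1$.

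The cleaner route is to do the dévissage inside $G$ rather than inside $A$. Let $G_{1}\defeq G/\ker(F_{\mathbb{G}_{a}^{r}})$, which is legitimate because $\ker F$ is characteristic in $\mathbb{G}_{a}^{r}$ and hence normal in $G$. This gives an exact sequence $0\to\mathbb{G}_{a}^{r,(p)}\cong\mathbb{G}_{a}^{r}\to G_{1}\to H\to 0$, with the adjoint action Frobenius-twisted. The twisted kernel for $v_{\ast}\alpha$ then becomes $A^{(p)}$, of height $\text{ht}(A)-1$. By induction, the image $\alpha_{1}$ of $\alpha$ in $H^{1}(K,G_{1})$ étale extends after an Artin--Schreier tower of length $\text{ht}(A)-1$ with all $s_{i}\geq M$. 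Over the top field $L'$, I then apply Lemma \ref{seq alpha_{p}} to $0\to\alpha_{p}^{r}\to G\to G_{1}\to 0$, with $\beta_{G_{1}}$ the extension just obtained. One more Artin--Schreier step $L'_{\wp,t'^{-s}}$, with $s\geq M$ prime to $p$, extends $\alpha$. This gives a tower of total length $\text{ht}(A)$, as required.

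The main obstacle is the bookkeeping in this inductive step. I must check that the twisted kernel of $0\to\mathbb{G}_{a}^{r}\to G_{1}\to H\to0$ along $v_{\ast}\alpha$ really is $A^{(p)}$, via Lemma \ref{twist e frobenius} and the compatibility of twisting with Frobenius. I also need a version of the base case that works even when the $\mathcal{O}_{K}$-twist is not a vector group. For that I would try to use only that the generic twist is trivial, $A\cong\mathbb{G}_{a}^{r}$, so $H^{1}(K,A)=0$. This means $\alpha$ is the unique lift over $K$, and it suffices to produce any integral lift of $\beta_{H}$, possibly after one more Artin--Schreier extension.
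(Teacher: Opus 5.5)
Your dévissage is the same as the paper's, and the inductive bookkeeping is correct. You quotient $G$ by Frobenius kernels of $\mathbb{G}_a^r$ until the generic twisted kernel is $\mathbb{G}_{a,K}^r$, then climb back up, paying one Artin--Schreier step per $\alpha_p^r$-layer via Lemma \ref{seq alpha_{p}}. The paper instead quotients by $\ker F^{N}$, $N=\text{ht}(A)$, in one go and applies Lemma \ref{seq unipotente inf} to $0\to\alpha_{p^N}^r\to G\to G/\alpha_{p^N}^r\to 0$, which unrolls into exactly your $N$ steps. Your identification of the new twisted kernel with $A^{(p)}$ via Lemma \ref{twist e frobenius}, and $\text{ht}(A^{(p)})=\text{ht}(A)-1$, are fine. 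The gap is in the base case, on which everything rests.

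At height $0$ you must produce $\beta\in H^{1}(\mathcal{O}_K,G)$ lifting $\beta_H$ \emph{without} enlarging $K$. Only then does $H^1(K,\mathbb{G}_a^r)=0$ give $\beta_K=\alpha$. The obstruction lies in $H^2(\mathcal{O}_K,\mathcal{A})$, where $\mathcal{A}$ is the twist of $\mathbb{G}_a^r$ by $\beta_H$ \emph{over $\mathcal{O}_K$}, and all you know is $\mathcal{A}_K\cong\mathbb{G}_{a,K}^r$.

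Your argument only covers a linear $H$-action, where $\mathcal{A}$ is a vector group. In the $\alpha_p^r$ case of Lemma \ref{seq alpha_{p}}, $\underline{\mathrm{Aut}}(\alpha_p^r)=\mathrm{GL}_r$ and Hilbert 90 kills all twists, but here $H$ acts by additive $p$-polynomial automorphisms, not just through $\mathrm{GL}_r$. For example, $\alpha_p$ acts on $\mathbb{G}_a$ by $x\mapsto x+hx^{p}$, and its twists are forms of the type $y^p=x+ax^p$ from the paper's example. So you cannot conclude that $\mathcal{A}$ is a vector group, and the quasi-coherent vanishing behind Lemma \ref{H1=R/R^{p}} does not apply.

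The paper fills exactly this hole with an external input: $H^2(\mathcal{O}_K,\mathcal{A})=0$ by \cite[Corollary 2.25]{tong:hal-02502965} (see also \cite{Veisfeiler1974-qs}), applied precisely because the generic fibre is split. None of your fallbacks substitutes for it:
\begin{itemize}
\item The Frobenius trick is exhausted at height $0$, since the remaining difficulty is integral, not generic.
\item An extra Artin--Schreier extension is unjustified, and it would give a tower of length $\text{ht}(A)+1$, contradicting the statement.
\item Passing to the henselization or completion would need both a vanishing result there and a patching argument to descend the integral lift to $\mathcal{O}_K$; you supply neither.
\end{itemize}
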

\begin{proof}
    To simplify notation, we will let $N\defeq \text{ht}(A)$. Let $a\colon H\times \mathbb{G}_{a}^{r}\to \mathbb{G}_{a}^{r}$ be the adjoint action of $H$ induced by the conjugation action of $G$. As before, the obstruction class for $\beta_{H}$ to be in the image of $H^{1}(\mathcal{O}_{K},G)$ lives in $H^{2}(\mathcal{O}_{K},\leftindex^{\beta_{H},a}{(\mathbb{G}_{a}^{r})})$. Let $\mathcal{A}\defeq \phantom{ }\leftindex^{\beta_{H},a}{(\mathbb{G}_{a}^{r})}$. If $N=0$, we have that the generic fiber $\mathcal{A}_{K}=A$ is isomorphic to $\mathbb{G}_{a,K}^{r}$. By \cite[Corollary 2.25]{tong:hal-02502965} (see also \cite{Veisfeiler1974-qs}) we have that $H^{2}(\mathcal{O}_{K}, A)=0$. We conclude that there exists $\beta\in H^{1}(\mathcal{O}_{K},G)$ with image $\beta_{H}\in H^{1}(\mathcal{O}_{K},H)$. Consider its image $\beta_{K}\in H^{1}(K,G)$. Since both $\beta_{K}$ and $\alpha$ are in $(v_{\ast})^{-1}(v_{\ast}(\alpha))\subseteq H^{1}(K,H)$, there exists $\gamma\in H^{1}(K,A_{K})$ such that $\gamma\cdot \beta_{K}=\alpha$. Since $A_{K}=\mathbb{G}_{a,K}^{r}$, we have that $H^{1}(K, A_{K})=H^{1}(K,\mathbb{G}_{a,K}^{r})=0$. Thus, $\gamma=0$ and we have that $\beta_{K}=\alpha$, i.e. $\beta$ extends $\alpha$. If $N>0$, consider the commutative diagram
    \[
    \begin{tikzcd}
        & 0\ar[d] & 0\ar[d]
        \\
        & \alpha_{p^{N}}^{r}\ar[d]\ar[r, equal] & \alpha_{p^{N}}^{r}\ar[d]
        \\
        0\ar[r] & \mathbb{G}_{a}^{r}\ar[r]\ar[d, "F^{N}"] & G\ar[r, "u"]\ar[d, "\phi"] & H\ar[r]\ar[d, equal] & 0
        \\
        0\ar[r] & \mathbb{G}_{a}^{r}\ar[r]\ar[d] & G^{\prime}\ar[r, "\pi"]\ar[d] & H\ar[r] & 0\;.
        \\
        & 0 & 0
    \end{tikzcd}
    \]
     with exact rolls and columns, where $F^{N}=F^{N}_{\mathbb{G}_{a}^{r}}$ is the $N$-th Frobenius morphism of $\mathbb{G}_{a}^{r}$ and $G^{\prime}$ is the quotient of $G$ by $\alpha_{p}^{r}$ (which is a characteristic subgroup of $\mathbb{G}_{a}^{r}$ and therefore a normal subgroup of $G$). Observe that by the definition of $G$ as the quotient of $G$ by the $N$th-Frobenius kernel of $\mathbb{G}_{a}^{r}$, the $N$-th Frobenius morphism $F^{N}_{G}$ of $G$ factors through $\phi$, i.e. we have a morphism $G^{\prime}\xrightarrow{\psi} G^{(p^{N})}$ such that $\psi\circ\phi=F^{N}_{G}$. We therefore have a commutative diagram
    \[
    \begin{tikzcd}
        \mathbb{G}_{a}^{r}\ar[r, hook]\ar[d, "F^{N}"] & G\ar[d, "\phi"]\ar[dd, bend left, "F^{N}_{G}"]
        \\
        \mathbb{G}_{a}^{r}\ar[r, hook]\ar[d, equal] & G^{\prime}\ar[d, "\psi"]
        \\
        \mathbb{G}_{a}^{r}\ar[r, hook] & G^{(p^{N})}.
    \end{tikzcd}
    \]
    Given $g^{\prime}$ (resp. $x$) a point of $G^{\prime}$ (resp. $\mathbb{G}_{a}^{r})$, after passing to an fppf cover, we can suppose that $g^{\prime}=\phi(g)$ (resp. $x=F^{N}(y)=\phi(y)$). Let $h=u(g)$, we have by definition of $a^{(p)}$ that
    \[
    a^{(p^{N})}(F_{H}^{N}(h))(x)=a(F_{H}^{N}(h))(F^{N}(y))=F^{N}(a(h)(y))=F^{N}(gyg^{-1}).
    \]
    On the other hand,
    \[
    a^{\prime}(h)(x)=a^{\prime}(\pi(g^{\prime}))(x)=g^{\prime}xg^{\prime -1}=\phi(g)F^{N}(y)\phi(g)^{-1}=\phi(gyg^{-1})=F^{N}(gyg^{-1}).
    \]
    Therefore, the action $a^{\prime}$ is given by the composition
    \[
    H\times\mathbb{G}_{a}^{r}\xrightarrow{F_{H}^{N}\times \text{Id}} H^{(P^{N})}\times\mathbb{G}_{a}^{r}\xrightarrow{a^{(p^N)}} \mathbb{G}_{a}^{r}\;.
    \] 
    Thus, if we denote $A^{\prime}=\leftindex^{\beta_{H},a^{\prime}}(\mathbb{G}_{a}^{r})$, we have
    \[
    A^{\prime}=\leftindex^{\beta_{H},a^{\prime}}(\mathbb{G}_{a}^{r})=\leftindex^{(F_{H/\mathcal{O}_{K}}^{N})_{\ast}\beta_{H},a^{(p^{N})}}(\mathbb{G}_{a}^{r}).
    \]
    Now, by Lemma \ref{twist e frobenius}, we have that $(F_{H/\mathcal{O}_{K}})_{\ast}\beta_{H}=\beta_{H}^{(p^{N})}$, where $\beta_{H}^{(p^{N})}$ is the class represented by the $N$-th Frobenius twist of any torsor representing $\beta_{H}$. Thus, we have
    \[
    A^{\prime}=\leftindex^{(F_{H/\mathcal{O}_{K}}^{N})_{\ast}\beta_{H},a^{(p^{N})}}(\mathbb{G}_{a}^{r})=\leftindex^{\beta_{H}^{(p^{N})},a^{(p^{N})}}(\mathbb{G}_{a}^{r})=(\leftindex^{\beta_{H},a}(\mathbb{G}_{a}^{r}))^{(p^{N})}=A^{(p^{N})},
    \]
    where the third equality follows from the fact that twists commute with base change by fppf morphisms (in this case, by \cite[\href{https://stacks.math.columbia.edu/tag/0EC0}{Tag 0EC0}]{stacks-project}, the absolute Frobenius of $\mathcal{O}_{K}$ is fppf). In particular, $A^{\prime}_{K}=(A^{(p^{N})})_{K}=\mathbb{G}_{a,K}^{r}$. Thus, $\phi_{\ast}\alpha$ extends to $H^{1}(R,G^{\prime})$. We can now apply Lemma \ref{seq unipotente inf}, observing that in this case $n_{0}(\alpha_{p^{N}}^{r})=N\geq \text{ht}(A_{L})$, to the exact sequence
    \[
    0\to \alpha_{p^{N}}^{r}\to G\to G^{\prime}\to 0
    \]
    over $L$ to finish the proof.
\end{proof}

We are now able to prove a general extension theorem for unipotent groups.

\begin{theorem}\label{seq unipotente}
    Let $U/F$ be a connected unipotent group. Let $0\to U\xrightarrow{u} G\xrightarrow{v} H\to 0$ be an exact sequence of affine flat $\mathcal{O}_{K}$-group schemes of finite type. Let $\alpha\in H^{1}(K,G)$ and suppose that there exists $\beta_{H}\in H^{1}(\mathcal{O}_{K},H)$ such that $(\beta_{H})_{K}=v_{\ast}(\alpha)\in H^{1}(K,H)$. Then, there exists an $N\geq 0$ (which depends on $\alpha$) such that for any $M\geq 0$, there exists a sequence $\{s_{i}\}_{1\leq i\leq N}$ with $s_{i}\geq M$ not divisible by $p$, with associated Artin--Schreier tower $L\defeq K_{N}/\cdots/K_{0}\defeq K$, such that $\alpha$ étale extends to $\mathcal{O}_{K}$ after passing to $L/K$.
\end{theorem}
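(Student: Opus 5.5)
We argue by induction on the length $n(U)$ of a characteristic series $S\colon 1=U_{0}\subseteq\cdots\subseteq U_{n}=U$ as in Lemma \ref{serie unipotente corpo}, dévissing from the top quotient downward exactly as in the proof of Lemma \ref{seq unipotente inf}. If $n(U)=0$ there is nothing to prove: $G=H$, and $\alpha=v_{\ast}(\alpha)$ already extends, so $N=0$ works. If $n(U)\geq 1$, set $U^{\prime}\defeq U_{n-1}$. Since $U^{\prime}$ is characteristic in $U$, which is normal in $G$, we get that $U^{\prime}$ is normal in $G$ (characteristicity is preserved after base change to $\mathcal{O}_{K}$ since the series is defined over $F$). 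We form $G^{\prime}\defeq G/U^{\prime}$ and obtain the same commutative diagram with exact rows and columns as in the proof of Lemma \ref{seq unipotente inf}. Its bottom row is $0\to U/U^{\prime}\to G^{\prime}\to H\to 0$, with $U/U^{\prime}\cong\alpha_{p}^{r}$ or $\mathbb{G}_{a}^{r}$.

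\emph{Step 1 (top quotient).} Apply Lemma \ref{seq alpha_{p}} (if the quotient is $\alpha_{p}^{r}$) or Lemma \ref{seq G_{a} 1} (if it is $\mathbb{G}_{a}^{r}$) to $\phi_{\ast}\alpha\in H^{1}(K,G^{\prime})$, using the given $\beta_{H}$. This yields an integer $N_{1}$: it is $1$ in the $\alpha_{p}$ case and $\mathrm{ht}(A)$ in the $\mathbb{G}_{a}$ case, where $A$ is the twist of $\mathbb{G}_{a}^{r}$ by $v_{\ast}(\alpha)$. The integer $N_{1}$ depends only on $\alpha$. For every $M$, these lemmas give an Artin--Schreier tower $K_{N_{1}}/\cdots/K$ with all $s_{i}\geq M$, over which $\phi_{\ast}\alpha$ extends to some $\beta^{\prime}\in H^{1}(\mathcal{O}_{K_{N_{1}}},G^{\prime})$.

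\emph{Step 2 (induction).} Over $\mathcal{O}_{K_{N_{1}}}$, consider the exact sequence $0\to U^{\prime}\to G\to G^{\prime}\to 0$, together with $\alpha_{K_{N_{1}}}$ and the extension $\beta^{\prime}$ of its image. Since $n(U^{\prime})<n(U)$, the induction hypothesis applies over the DVR $\mathcal{O}_{K_{N_{1}}}$. Its residue field is still perfect, because the extension is totally ramified, and it still contains $F$. The hypothesis gives an $N_{2}$ and, for every $M$, a further tower of length $N_{2}$ with $s_{i}\geq M$ over which $\alpha$ extends. Concatenating the two towers gives a tower of length $N_{1}+N_{2}$ over $K$ with all $s_{i}\geq M$. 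We also need $U^{\prime}$ to be connected, or at least to admit the same dévissage. I would state the induction for unipotent groups that admit a series as in Lemma \ref{serie unipotente corpo}, which $U^{\prime}$ does by restricting $S$.

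\emph{Main obstacle: uniformity of $N$ in $M$.} The integer $N_{2}$ produced by the induction may a priori depend on the intermediate field $K_{N_{1}}$, and hence on the choice of the $s_{i}$, which varies with $M$. In the $\mathbb{G}_{a}$ case, $N_{2}$ is a height of a twist of $\mathbb{G}_{a}^{r}$ over $K_{N_{1}}$. I would bound it by the height of the corresponding twist over $K$ by $v_{\ast}(\alpha)$, using that heights can only decrease under field extension, since $A^{(p^{N})}\cong\mathbb{G}_{a}^{r}$ is preserved by base change. The $\alpha_{p}$ steps contribute exactly $1$ each. So I would take $N$ to be the sum of these bounds over the levels of $S$. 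Where the bound is not attained, I would pad the tower with extra Artin--Schreier steps, with $s_{i}\geq M$; these are harmless because extendability is preserved by further base change. The delicate point to verify is that the twist appearing at each level over $K_{N_{1}}$ is really the base change of a twist defined over $K$ by $\alpha$ itself. This holds because it is the twist by the image of $\alpha$ under $G\to G/U_{i-1}$, which is defined over $K$.
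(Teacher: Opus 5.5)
Your proposal is correct and follows the paper's proof. Both argue by induction on the length of the characteristic series, split off the top quotient via $G^{\prime}=G/U_{n-1}$, apply Lemma \ref{seq alpha_{p}} or Lemma \ref{seq G_{a} 1} to $\phi_{\ast}\alpha$, and then apply the induction hypothesis to $0\to U^{\prime}\to G\to G^{\prime}\to 0$ over the intermediate field. Your extra bookkeeping makes explicit the uniformity of $N$ that the paper takes for granted when it writes $N=N^{\prime}+N_{n(S)}$: you twist each $U_{i}/U_{i-1}$ by the image of $\alpha$ in $H^{1}(K,G/U_{i})$, bound heights over the intermediate field by heights over $K$, and pad the tower with further Artin--Schreier steps.
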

\begin{proof}
    Fix a characteristic series $S$ as in Lemma \ref{serie unipotente corpo} with length $n(U)$ and $n_{0}=n_{0}(U)$. Since each subgroup of $S$ is characteristic, we have that the conjugation action on $U$ restricts to each one of the $U_{i}$'s and, therefore, descends to each successive quotient $U_{i}/U_{i-1}$. Since each of these quotients are commutative, the action descends to an $H$-action. For $n_{0}(S)<i\leq n(S)$, let $A_{i}/K$ be the twist of $U_{i}/U_{i-1}$ by $v_{\ast}(\alpha)\in H^{1}(K,H)$ under this action. These are twists of powers of $\mathbb{G}_{a}$ and we let $N_{i}\defeq \text{ht}(A_{i})$. We claim that by taking $N$ to be
    \[
    N\defeq n_{0}(U)+\sum_{i=n_{0}(U)+1}^{n(U)}N_{i}
    \]
    we can prove the lemma. The proof is once again done by induction on $n(U)$. If $n(U)=n(S)=1$ this is covered by Lemma \ref{seq alpha_{p}} and Lemma \ref{seq G_{a} 1}. Suppose then that $n(U)=n(S)>1$. Since the $U_{i}$'s are characteristic, $U^{\prime}\defeq U_{n(S)-1}$ is a normal subgroup of both $U$ and $G$, so we can take the quotients $U/U^{\prime}$ and $G^{\prime}\defeq G/U^{\prime}$, fitting in the following commutative diagram 
    \[
    \begin{tikzcd}
        & 0\ar[d] & 0\ar[d]
        \\
        & U^{\prime}\ar[d]\ar[r, equal] & U^{\prime}\ar[d]
        \\
        0\ar[r] & U\ar[r]\ar[d] & G\ar[r]\ar[d, "\phi"] & H\ar[r]\ar[d, equal] & 0
        \\
        0\ar[r] & U/U^{\prime} \ar[r]\ar[d] & G^{\prime}\ar[r]\ar[d] & H\ar[r] & 0\;,
        \\
        & 0 & 0
    \end{tikzcd}
    \]
    where the rows and columns are exact. Now, if $U/U^{\prime}$ is already isomorphic to some power of $\alpha_{p}$, we have that $n_{0}(S)=n(S)$ and this is just Lemma \ref{seq unipotente inf}. Otherwise, it is isomorphic to some power of $\mathbb{G}_{a}$ and we can apply Lemma \ref{seq G_{a} 1} to the exact sequence
    \[
    0\to U/U^{\prime}\to G^{\prime}\to H\to 0
    \]
    to conclude that $\alpha^{\prime}\defeq \varphi_{\ast}(\alpha)\in H^{1}(K,G^{\prime})$ étale extends to $\Spec \mathcal{O}_{K}$ after passing to some Artin--Schreier tower $L\defeq K_{N_{n(U)}}/\cdots/K_{0}=K$ associated to a sequence $\{r_{i}\}_{1\leq i\leq N_{n_{0}(U)}}$ with $r_{i}\geq M$ not divisible by $p$.
    Since $n(U_{n(U)-1})=n(U)-1$, looking at the exact sequence
    \[
    0\to U^{\prime}\to G\to G^{\prime}\to 0,
    \]
    by induction, if we let
    \[
    N^{\prime}\defeq n_{0}(U^{\prime})+\sum_{i=n_{0}(U^{\prime})+1}^{n(U^{\prime})}N_{i},
    \]
    we have that the class $\alpha_{L}\in H^{1}(L,G)$ étale extends to $\Spec \mathcal{O}_{L}$ after passing to some Artin--Schreier tower $L_{N^{\prime}}/\cdots/L_{0}=L$ associated to a sequence $\{t_{i}\}_{1\leq i\leq N_{2}}$ with $t_{i}\geq M$ not divisible by $p$. Since $N=N^{\prime}+N_{n(S)}$, we have proved the theorem.
\end{proof}

\begin{corollary}\label{unipotent}
    Let $U/F$ be a connected unipotent group. Then, for any $M\geq 0$, there exists a sequence $\{s_{i}\}_{1\leq i\leq n_{0}(U)}$ with $s_{i}\geq M$ not divisible by $p$, with associated Artin--Schreier tower $L\defeq K_{n_{0}(U)}/\cdots/K_{0}\defeq K$, such that $\alpha$ étale extends to $\mathcal{O}_{K}$ after passing to $L/K$.
\end{corollary}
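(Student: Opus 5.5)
This is the special case of Theorem \ref{seq unipotente} with trivial quotient $H$. Take $\alpha\in H^{1}(K,U)$ a $U$-torsor class over $K$ and the exact sequence
\[
0\to U_{\mathcal{O}_{K}}\xrightarrow{\mathrm{id}} U_{\mathcal{O}_{K}}\to 1\to 0
\]
of affine flat $\mathcal{O}_{K}$-group schemes of finite type, with $G=U_{\mathcal{O}_{K}}$ and $H=1$. The hypothesis of Theorem \ref{seq unipotente} is then automatic. Indeed, $H^{1}(\mathcal{O}_{K},1)$ consists of the trivial class $\beta_{H}$, and its restriction to $K$ equals $v_{\ast}(\alpha)$.

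The point is to control the length of the tower. With $H$ trivial, the adjoint action of $H$ on each graded piece $U_{i}/U_{i-1}$ is trivial. So every twist $A_{i}$ in the proof of Theorem \ref{seq unipotente} is the untwisted $\mathbb{G}_{a,K}^{r_{i}}$, and hence $N_{i}=\operatorname{ht}(A_{i})=0$. The integer $N$ produced there is therefore
\[
N=n_{0}(U)+\sum_{i>n_{0}(U)} N_{i}=n_{0}(U).
\]
Theorem \ref{seq unipotente} then gives, for every $M$, a tower of length $n_{0}(U)$ with all $s_{i}\geq M$ prime to $p$ over which $\alpha$ extends.

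The $\mathbb{G}_{a}$-steps contribute no Artin--Schreier layer because, as in the $N=0$ case of Lemma \ref{seq G_{a} 1}, $H^{2}(\mathcal{O}_{K},\mathbb{G}_{a}^{r})=0$ and $H^{1}(K,\mathbb{G}_{a}^{r})=0$. Consequently any lift of the torsor is already unique and extends. For the $\alpha_{p}$-steps, Lemma \ref{seq unipotente inf} uses one Artin--Schreier layer for each of the $n_{0}(U)$ infinitesimal graded pieces.

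The main obstacle is the induction inside Theorem \ref{seq unipotente}: after the first quotient step the base is $L$, and the quotient group $H=G/U'$ is no longer trivial. Even so, I expect the induced action on the remaining graded pieces to come from the action of $U$ on its own subquotients through conjugation, which is trivial on the generic pieces where the torsor class is defined. If the twists there turn out to be nontrivial, I will instead rerun the induction directly for $U$ using the characteristic series with $n_{0}=n_{0}(U)$. At each $\mathbb{G}_{a}$-step I will check that the obstruction group $H^{2}$ and the difference group $H^{1}$ of the generic twist both vanish; this holds because a twist of $\mathbb{G}_{a}^{r}$ by a torsor that extends over $\mathcal{O}_{K}$ is, generically, handled by the $N=0$ argument after Frobenius. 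Finally, the tower length counts only the $\alpha_{p}$-steps, namely $n_{0}(U)$.
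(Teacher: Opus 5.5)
Your specialization $G=U$, $H=1$ is exactly the paper's one-line proof. Your computation $N_{i}=0$, $N=n_{0}(U)$ is what that line tacitly uses. However, the obstacle you raise in your last paragraph is real (the paper's proof of Theorem~\ref{seq unipotente} passes over it as well), and neither of your proposed resolutions closes it.

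In the induction of Theorem~\ref{seq unipotente}, the layer $U_{i}/U_{i-1}$ is treated through $0\to U_{i}/U_{i-1}\to U/U_{i-1}\to U/U_{i}\to 0$. So the twist that matters is by the image of $\alpha$ in $H^{1}(K^{\prime},U/U_{i})$, where $K^{\prime}$ is the field reached so far, under the conjugation action of $U/U_{i}$. Triviality of the original $H$ says nothing about this twist.

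Your claim that this conjugation action is trivial is false in general. Take the group of upper unitriangular $4\times 4$ matrices, with characteristic series $1\subseteq[U,U]\subseteq U$ and $E_{ij}$ the elementary matrices. Conjugation by $1+xE_{12}$ sends $1+bE_{24}$ to $1+bE_{24}+xbE_{14}$, so $U/[U,U]$ acts nontrivially on $[U,U]\cong\mathbb{G}_{a}^{3}$.

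Your fallback fails as well. You give no reason why $H^{1}$ of a non-split twist $A$ should vanish. The Frobenius device of Lemma~\ref{seq G_{a} 1}, the only tool available for non-split twists, spends $\operatorname{ht}(A)$ extra Artin--Schreier layers, so it cannot produce a tower of length $n_{0}(U)$.

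The missing point is that it is the twisting torsor, not the action, that is trivial. For $i\geq n_{0}(U)$, the group $U/U_{i}$ is an iterated extension of the vector groups $U_{j}/U_{j-1}\cong\mathbb{G}_{a}^{r_{j}}$ with $j>i$. Hence $H^{1}(K^{\prime},U/U_{i})=0$ for every field $K^{\prime}\supseteq F$. This follows by induction from $H^{1}(K^{\prime},\mathbb{G}_{a})=0$ and the exactness, for $N\triangleleft E$, of $H^{1}(K^{\prime},N)\to H^{1}(K^{\prime},E)\to H^{1}(K^{\prime},E/N)$ at the trivial class. Therefore every twist at a $\mathbb{G}_{a}$-level is the split $\mathbb{G}_{a}^{r_{i}}$, whatever the action, and your third paragraph then goes through.

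There is also a more direct route. Since $H^{1}(K,U/U_{n_{0}(U)})=0$, the class $\alpha$ comes from some $\alpha_{0}\in H^{1}(K,U_{n_{0}(U)})$. Run the induction of Lemma~\ref{seq unipotente inf} along $U_{0}\subseteq\cdots\subseteq U_{n_{0}(U)}$; each $U_{j}$ is normal in $U_{n_{0}(U)}$, being characteristic in $U$. This extends $\alpha_{0}$ after an Artin--Schreier tower of length $n_{0}(U)$. Pushing forward along $U_{n_{0}(U)}\hookrightarrow U$ then extends $\alpha$.
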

\begin{proof}
    The result follows from Theorem \ref{seq unipotente} by setting $G=U$ and $H=1$.
\end{proof}

As in the $\alpha_{p}$ case, we can also prove simultaneous extension for multiple valuations.

\begin{proposition}\label{extensao multipla}
    Let $K/F$ be a field extension, $v_{1},\dots ,v_{n}$ be nonequivalent nontrivial discrete valuations of $K$, which are trivial over $F$, with valuation rings $\mathcal{O}_{K,i}$ and algebraically closed residue fields. Let $R$ be the intersection of the valuation rings in $K$ and $0\to U\xrightarrow{u} G\xrightarrow{v} H\to 0$ be an exact sequence of affine flat $R$-group schemes of finite presentation with $U$ a connected unipotent group defined over $F$.  Let $\alpha\in H^{1}(K,G)$ and suppose that for every $i$, there exists $\beta_{H,i}\in H^{1}(\mathcal{O}_{K,i},H)$ such that $(\beta_{H,i})_{K}=v_{\ast}(\alpha)\in H^{1}(K,H)$. Then, there exists $N\geq 0$ such that for any $M\geq 0$, there exists a sequence $\{s_{i,j}\}_{1\leq N, 1\leq j\leq n}$ with $s_{i,j}\geq M$ not divisible by $p$ such that for any tower $L=K_{N}/\cdots/K_{0}=K$, where $K_{i}\defeq (K_{i-1})_{\wp,f_{i}^{-1}}$ for some $f_{i}\in K_{i-1}$ with $v_{j}(f_{i})=-s_{i,j}$, $\alpha$ étale extends to $\Spec \mathcal{O}_{K,i}$ after passing to $L/K$ for all $i$.
\end{proposition}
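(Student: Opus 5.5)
We follow the proof of Theorem \ref{seq unipotente} step by step. Each local statement used there is replaced by a simultaneous version over the finitely many valuations $v_{1},\dots,v_{n}$. The only place where a field extension is produced is in the $\alpha_{p}$-type steps, i.e. Lemma \ref{seq alpha_{p}} and its iterations in Lemma \ref{seq unipotente inf} and Lemma \ref{seq G_{a} 1}. There we use Proposition \ref{extensao multipla alpha_{p}} instead of Proposition \ref{alpha_{p} dvr}. By Remark \ref{sep fechado}, since the residue fields are algebraically closed, killing a class $[a]\in Q_{K,j}$ with $v_{j}(a)=-m_{j}$ needs only one Artin--Schreier extension $K_{\wp,g}$ with $v_{j}(g)=-s_{j}$, $p\nmid s_{j}$ and $(p-1)s_{j}>m_{j}p$. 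No particular uniformizer is needed. Lemma \ref{approximation for fields} then gives an element with the prescribed valuations $-s_{i,j}$ at all $v_{j}$ at once. Such an extension is totally ramified at each $v_{j}$ by Lemma \ref{AS totally-ramified}. So each $v_{j}$ has a unique extension, again with algebraically closed residue field, and the induction can continue in the next field of the tower.

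The key steps, in order, are as follows.

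First, fix a characteristic series of $U$ as in Lemma \ref{serie unipotente corpo}. Define $N$ as in Theorem \ref{seq unipotente}, taking for each $\mathbb{G}_{a}$-type layer the maximum over $j$ of the heights of the twists $A_{i}$ over $K$. These twists depend only on $v_{\ast}(\alpha)$ and the action, not on $j$. So in fact a single height $N_{i}$ works for all $j$.

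Second, at every layer, lift the torsor over each $\mathcal{O}_{K,j}$ separately. The obstruction arguments of Lemma \ref{seq alpha_{p}} and Lemma \ref{seq G_{a} 1} are purely local, using the vanishing of $H^{2}(\mathcal{O}_{K,j},\cdot)$. They give classes $\beta_{j}$ over each $\mathcal{O}_{K,j}$. The discrepancy with $\alpha$ is then a single class $\gamma_{j}\in H^{1}(K,\alpha_{p}^{r})$ for each $j$.

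Third, let $m_{j}$ be the pole order of $\gamma_{j}$ at $v_{j}$, measured via the normal form of Lemma \ref{forma normal Q_K} after completion. Choose $s_{\cdot,j}\geq M$, prime to $p$, with $(p-1)s_{\cdot,j}>m_{j}p$. Note that in the statement the requirement is only that \emph{some} sequence works for \emph{any} choice of $f_{i}$ with those valuations, which is exactly what Remark \ref{sep fechado} gives. By Lemma \ref{injectivity after completion}, extension over the completion descends to extension over $\mathcal{O}_{K_{1},j}$ for each $j$.

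Fourth, run the induction on $n(U)$ exactly as in Theorem \ref{seq unipotente}. At each stage the new field again carries $n$ nonequivalent valuations with algebraically closed residue fields. Their intersection ring is the normalization of $R$. The group schemes are base changed to it, so they remain flat of finite presentation.

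The main obstacle is the bookkeeping of the $s_{i,j}$. The integers needed at stage $i$ depend on the classes $\gamma_{j}$ appearing there. Those classes depend on the previous extensions, and a priori on the choice of the $f$'s, whereas the statement wants the $s_{i,j}$ fixed first and \emph{any} tower to work. I would handle this by bounding the pole orders uniformly. After a totally ramified Artin--Schreier extension with $v_{j}(f)=-s$, the pole orders of the relevant representatives are controlled linearly in terms of $s$ and the previous pole orders, e.g. via the computation in Lemma \ref{t em funcao de pi}, independently of which $f$ was chosen. So choosing the $s_{i,j}$ inductively and large enough at each stage suffices. If this uniformity fails, I would fall back to the weaker reading: the tower is built step by step, with $f_{i}$ chosen by Lemma \ref{approximation for fields} once the previous field is known.
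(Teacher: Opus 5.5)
Your route differs from the paper's. The paper never reopens the induction. It applies Theorem \ref{seq unipotente} to each $\mathcal{O}_{K,j}$ separately, which gives $N_j$ and, for each $M$, a sequence $(s_{i,j})_i$. It then sets $N=\max_j N_j$ and declares it clear that every tower with $v_j(f_i)=-s_{i,j}$ works, tacitly using Remark \ref{sep fechado} to pass from $t^{-s}$ to an arbitrary $f$ of valuation $-s$. You instead make each $\alpha_p$-step simultaneous, using Remark \ref{sep fechado} over the completions, Lemma \ref{injectivity after completion}, and Lemma \ref{approximation for fields} to prescribe all $v_j(f_i)$ at once. This is the more careful argument. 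Your observation that the relevant heights are those of twists defined over $K$ also shows that $N$ does not depend on $j$, so the maximum in the paper is superfluous.

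The gap is the one you flag yourself. Your first four steps prove the \emph{adaptive} statement: once $K_{i-1}/\cdots/K_0$ is built, there is a threshold, depending on that tower, such that every $f_i$ whose values $-v_j(f_i)$ are prime to $p$ and above the threshold does the job. The statement as written fixes all $s_{i,j}$ before the tower. For that you need the pole orders of the stage-$i$ classes $\gamma_j$ to be bounded independently of $f_1,\dots,f_{i-1}$.

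Lemma \ref{t em funcao de pi} does not provide this bound. It only rewrites fixed elements $t^{-m}$ of the base field in terms of a uniformizer of the specific extension $K_{\wp,t^{-s}}$. For $i\geq 2$, the classes $\gamma_j$ are instead manufactured inside $K_{i-1}$, by choosing integral lifts (through the vanishing of $H^{2}$) and $p$-th roots $c$ with $c^{p}=a-b$, $b\in\mathcal{O}_{K_{i-1}}$. What is needed is an estimate such as $w(c)\geq \min(w(a),0)/p$, carried through every layer. This includes the non-abelian cocycle terms and the $\mathbb{G}_{a}$-layers of Lemma \ref{seq G_{a} 1}. For $\alpha_{p^{2}}$ and a class $a$ with $v(a)=-m$, this estimate shows that the second-stage class has pole order at most $m$, whatever $f_1$ is. So the strong statement is plausible, but the general estimate is an argument you still have to write.

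The paper's one-line proof presupposes exactly this uniformity without comment. Theorem \ref{seq unipotente} is itself proved with adaptively chosen $s_i$: in Lemma \ref{seq unipotente inf}, the sequence $\{s_i\}_{i\geq 2}$ comes from the induction hypothesis applied over the already chosen $L$. Per-valuation conclusions of that kind cannot be merged into a single tower unless one knows that each stage only demands that $s$ be large enough. Your fallback reading is fully justified by your argument. It is also all that Proposition \ref{seq unipotente curvas} later uses, since there the $f_i$ are produced one at a time by Riemann--Roch.
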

\begin{proof}
    For each $i$, by Theorem \ref{seq unipotente} we get an $N_{i}\geq 0$ such that for any $M\geq 0$, there exists a sequence $\{s_{i}\}$ after passing to a tower of this size as in the proposition, $\alpha$ étale extends to $\mathcal{O}_{K,i}$. Let $N=\text{max}_{1\leq i\leq n}\{N_{i}\}$. It is then clear that after passing to any tower as in the claim, $\alpha$ étale extends to $\mathcal{O}_{K,i}$ for all $i$.
\end{proof}

\section{Étale extending unipotent torsors over curves}\label{4}
We now prove results similar to what we proved in the previous section where the ambient space is now taken to be a curve. Namely, given a torsor under a connected unipotent group on an open set $X^{\circ}$ of a curve $X/F$, does it étale extend to all of $X$?

The main idea in this section is to reduce this problem to the case of a DVR by finding a suitable ramified cover of the curve where we can extend the torsor at the local rings of the points where the torsor is not defined. To find this ramified cover, we will need some kind of approximation lemma.

\begin{lemma}\label{approximation for curves}
    Let $F$ be an algebraically closed field and $X/F$ be a normal integral curve with normal compactification of genus $g$. Let $x_{1},\dots, x_{r}\in X$ be distinct closed points. Then for any choice of integers $n_{1},\dots, n_{r}\geq 1$ such that $n_{1}+\cdots n_{r}\geq 2g$, there exists a rational function $f\in F(X)$ with polar divisor
    \[
    \text{div}_{\infty}(f)=\sum_{i=1}^{r}n_{i}[x_{i}].
    \]
\end{lemma}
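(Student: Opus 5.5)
We pass to the smooth projective model $\overline{X}$ of $X$ and apply Riemann--Roch there, using the divisors $D_{k}\defeq \sum_{i=1}^{r}k_{i}[x_{i}]$ for tuples $0\leq k_{i}\leq n_{i}$. Since $X$ is a normal curve over an algebraically closed field, it is smooth and embeds as an open subscheme of a unique smooth projective curve $\overline{X}$ of genus $g$. The points $x_{i}$ remain distinct closed points of $\overline{X}$, and $F(X)=F(\overline{X})$. The polar divisor of $f$ is then computed on $\overline{X}$. We read the statement this way, since the poles at points of $\overline{X}\setminus X$ must also be excluded for the equality of divisors to hold.

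Set $D\defeq D_{(n_{1},\dots,n_{r})}=\sum_{i}n_{i}[x_{i}]$, so $\deg D\geq 2g$. A rational function $f$ has polar divisor exactly $D$ if and only if $f\in L(D)$ and $f\notin L(D-[x_{i}])$ for every $i$; the second condition says the pole at $x_{i}$ has order exactly $n_{i}$. Since $n_{i}\geq 1$, each $D-[x_{i}]$ is effective of degree $\deg D-1\geq 2g-1>2g-2$. Riemann--Roch therefore gives
\[
\ell(D)=\deg D+1-g,\qquad \ell(D-[x_{i}])=\deg D-g,
\]
so each $L(D-[x_{i}])$ is a proper subspace of codimension one in $L(D)$.

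Next we use that a vector space over an infinite field is never a finite union of proper subspaces. The field $F$ is infinite because it is algebraically closed. Hence there is some
\[
f\in L(D)\setminus\bigcup_{i=1}^{r}L(D-[x_{i}]).
\]
Such an $f$ is regular away from $\{x_{1},\dots,x_{r}\}$, including at the points of $\overline{X}\setminus X$, and has a pole of order exactly $n_{i}$ at each $x_{i}$. Thus $\operatorname{div}_{\infty}(f)=D$.

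The only delicate step is the degree bookkeeping. The bound $\deg D\geq 2g$ is used precisely to keep $\deg(D-[x_{i}])>2g-2$, which is what makes both Riemann--Roch computations exact. The rest of the argument is formal.
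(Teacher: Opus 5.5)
Your proposal is correct and is essentially the paper's proof: pass to the smooth projective model, use Riemann--Roch (exact because $\deg(D-[x_i])\geq 2g-1>2g-2$) to see that each $L(D-[x_i])$ is a proper codimension-one subspace of $L(D)$, and then pick $f$ outside their finite union using that $F$ is infinite. Your explicit degree bookkeeping, and your remark that $f$ has no poles on $\overline{X}\setminus X$, make precise what the paper leaves implicit.
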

\begin{proof}
    After passing to the normal compactification we can suppose that $X/F$ is proper. Consider the divisor
    \[
    D=\sum_{i=1}^{r}n_{i}[x_{i}]\;.
    \]
    Since $\text{deg}(D)=n_{1}+\cdots +n_{r}\geq 2g$, by Riemann--Roch we have that
    \[
    l(D)=\text{deg}(D)+1-g>0
    \]
    and
    \[
    l(D-[x_{i}])=\text{deg}(D-[x_{i}])+1-g=l(D)-1< l(D)
    \]
    Now, we want to find a nonzero function $f\in L(D)\setminus \bigcup_{i=1}^{r}L(D-[x_{i}])$, i.e., prove that $L(D)\setminus \bigcup_{i=1}^{r}L(D-[x_{i}])\neq 0$. But this is clear since a vector space over an infinite field cannot be a finite union of proper subspaces.
    Thus, there exists such an $f\in F(X)$ and we are done.
\end{proof}

\begin{remark}
    In the preceding lemma we need the poles to be large enough so that $H^{1}(X,\mathcal{O}_{X}(D))=0$ and we can apply Riemann--Roch to assure the existence of our desired function. Observe that if $X$ is affine, for arbitrary integers $n_{1},\cdots, n_{r}$, we could look at the divisor
    \[
    D=\sum_{i=1}^{r}n_{i}[x_{i}]+N[x_{\infty}],
    \] 
    where $x_{\infty}$ is a point at infinity in a compactification of $X$. By setting $N$ to be very large, we can use the same technique as before to find a rational function in $F(X)$ with order $n_{i}$ at the points $x_{i}$ without any restraints on the $n_{i}s$. This will not be important to us.
\end{remark}

\begin{proposition}\label{seq unipotente curvas}
    Let $F$ be an algebraically closed field, $U/F$ be a connected unipotent group scheme of finite type and $X/F$ be a normal integral curve. Let $0\to U\xrightarrow{u} G\xrightarrow{v} H\to 0$ be an exact sequence of affine flat $X$-group schemes of finite type. Let $x_{1},\dots, x_{r}$ be distinct closed points of $X$ and denote $X^{\circ}=X\setminus\{x_{1},\dots, x_{r}\}$. Let $P\to X^{\circ}$ be a $G$-torsor and suppose that the induced $H$-torsor $P_{H}\to X^{\circ}$ extends to an $H$-torsor over $X$. Then, there exists a normal connected integral curve $Y$ and a surjective finite morphism $\varphi\colon Y\to X$ étale over $X^{\circ}$ and totally and wildly ramified over each $x_{i}$ such that the pulled-back torsor $\varphi^{\ast}P\to \varphi^{-1}(X^{\circ})$ extends to $Y$.
\end{proposition}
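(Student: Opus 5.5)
Our plan is to reduce to the local statement, Proposition \ref{extensao multipla}, and to use Lemma \ref{approximation for curves} to globalize the Artin--Schreier towers it produces. Let $K=F(X)$ and let $v_{1},\dots,v_{r}$ be the normalized valuations at $x_{1},\dots,x_{r}$. They are pairwise nonequivalent, trivial on $F$, and have residue field $F$, which is algebraically closed. The torsor $P$ gives a class $\alpha\in H^{1}(K,G)$. Since $P_{H}$ extends to $X$, restricting that extension to $\Spec\mathcal{O}_{X,x_{i}}$ gives classes $\beta_{H,i}$ with $(\beta_{H,i})_{K}=v_{\ast}(\alpha)$. Proposition \ref{extensao multipla} then gives an integer $N$ and, for any $M$, exponents $s_{i,j}\geq M$ prime to $p$. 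After passing to any tower $K_{N}/\cdots/K_{0}=K$ with $K_{i}=(K_{i-1})_{\wp,f_{i}^{-1}}$ and $v_{j}(f_{i})=-s_{i,j}$ at the unique place over $x_{j}$, the class $\alpha$ extends over the valuation ring above each $x_{j}$.

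We build the tower geometrically by induction. Put $Y_{0}=X$. Given $Y_{i-1}$, a normal connected curve with points $y_{j}$ lying over the $x_{j}$, we apply Lemma \ref{approximation for curves} to $Y_{i-1}$ (after compactifying) with the points $y_{1},\dots,y_{r}$ and pole orders equal to the required $s_{i,j}$. We are free to take $M$ large, and in particular with $\sum_{j}s_{i,j}\geq 2g(Y_{i-1})$; the only concern here is that the genus grows along the tower, which is addressed below. We also need every pole to lie in $\{y_{j}\}$. Lemma \ref{approximation for curves} gives $f$ with polar divisor exactly $\sum_{j}s_{i,j}[y_{j}]$, so $f$ is regular on $Y_{i-1}\setminus\{y_{j}\}$. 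Let $Y_{i}$ be the normalization of $Y_{i-1}$ in $F(Y_{i-1})_{\wp,f}$; the extension is nontrivial because the pole orders are prime to $p$. Over the open set where $f$ is regular, $Y_{i}$ is given by the étale equation $T^{p}-T=f$, so $Y_{i}\to Y_{i-1}$ is étale there. By Lemma \ref{AS totally-ramified} it is totally and wildly ramified at each $y_{j}$, so there is again a unique point above each $y_{j}$. Composing gives $\varphi\colon Y=Y_{N}\to X$, finite and surjective, étale over $X^{\circ}$, and totally and wildly ramified over each $x_{i}$.

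There is a mismatch to resolve. Proposition \ref{extensao multipla} fixes the $s_{i,j}$ before the later fields are built, whereas our Riemann--Roch condition depends on the genus of $Y_{i-1}$. This causes no problem, because for each $j$ the local input comes from Theorem \ref{seq unipotente}, which requires only that every $s\geq M$, with arbitrary $M$. So at each step we choose the $s_{i,j}$ prime to $p$ and large enough both for the local requirement and for $\sum_{j}s_{i,j}\geq 2g(Y_{i-1})$. One point needs checking: the local results of Section \ref{3} use extensions $K_{\wp,t^{-s}}$ built from a uniformizer $t$, while we use a global $f$. We bridge this with the lemma after Remark \ref{sep fechado}: the residue fields are algebraically closed, so any $f$ with $v(f)=-s$ and $p\nmid s$ has the form $t^{-s}$ for a suitable uniformizer $t$. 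Hence the local extension over each $y_{j}$ is an Artin--Schreier extension of the kind required.

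It remains to glue. Over $\varphi^{-1}(X^{\circ})$ we have the torsor $\varphi^{\ast}P$, and over each $\Spec\mathcal{O}_{Y,y_{j}}$ we have a $G$-torsor whose generic fiber is isomorphic to $(\varphi^{\ast}P)_{K(Y)}$. We expect the main obstacle to be spreading each local torsor to an open neighbourhood of $y_{j}$ and gluing it with $\varphi^{\ast}P$. We would proceed as follows. $G$ is affine of finite type, so torsors are representable and of finite presentation. A torsor over the local ring therefore spreads out to a neighbourhood $V_{j}$ of $y_{j}$. The isomorphism with $\varphi^{\ast}P$ at the generic point also spreads out, to an isomorphism over a dense open subset of $V_{j}\cap\varphi^{-1}(X^{\circ})$. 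After shrinking $V_{j}$ so that $V_{j}\setminus\{y_{j}\}$ lies inside that open subset, we glue along the cover formed by $\varphi^{-1}(X^{\circ})$ and the $V_{j}$. This cover has no triple overlaps involving two different $V_{j}$ once these are chosen disjoint, so no cocycle condition arises. The result is a $G$-torsor on $Y$ extending $\varphi^{\ast}P$.
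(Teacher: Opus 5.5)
Your proof follows the paper's: the local input comes from Proposition \ref{extensao multipla}, Lemma \ref{approximation for curves} realizes the Artin--Schreier tower globally with poles only above the $x_j$, you normalize, and you finish by spreading out and Zariski gluing. In two places you are more careful than the paper, which only says ``orders as large as we want'' and ``iterate''.

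First, you notice that $g(Y_{i-1})$ grows with the earlier exponents, so the $s_{i,j}$ must be chosen stage by stage rather than fixed in advance. This is legitimate, but the justification is the inductive proof of Theorem \ref{seq unipotente}, not its literal statement, which only asserts that one sequence exists. In that proof every stage reduces to Lemma \ref{seq alpha_{p}}, which accepts every $s\gg 0$ prime to $p$, with a threshold depending on earlier choices. Second, your étaleness argument via the equation $T^{p}-T=f$ with $f$ regular is cleaner than the paper's appeal to the Artin--Schreier lemmas.

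Two small corrections are needed.

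The $V_j$ cannot be chosen pairwise disjoint: $Y$ is irreducible, so any two nonempty opens meet. Disjointness is not needed, though. After your shrinking, $V_j\cap V_k\subseteq\varphi^{-1}(X^{\circ})$ for $j\neq k$. Let $\psi_j$ be your isomorphism between the local torsor and $\varphi^{\ast}P$ over $V_j\setminus\{y_j\}$, and take $\psi_k^{-1}\circ\psi_j$ as the transition map on $V_j\cap V_k$. Then the cocycle condition holds automatically; this is exactly the paper's gluing. Alternatively, add the points $y_j$ one at a time.

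Writing $f=t^{-s}$ with $t$ a uniformizer uses Hensel's lemma. So it holds in the completion of $\mathcal{O}_{Y,y_j}$, but generally not in $K(Y)$ itself. The bridge to the local results should therefore go through $\widehat{K}$ and Lemma \ref{injectivity after completion}, which is how Remark \ref{sep fechado} has to be read.
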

\begin{proof}
    Let $K$ be the function field of $X$ and $v_{i}$ be the valuation in $K$ induced by $x_{i}$. Let $P_{K}\to \Spec K$ be the induced $G$-torsor over $\Spec K$ and let $N\geq 0$ be the integer (which depends on $P_{K}\to \Spec K$) in Proposition \ref{extensao multipla}. By Lemma \ref{approximation for curves}, we can find $f_{1}\in K$ with poles only at the $x_{i}$'s with orders as large as we want and not divisible by $p$. Let $K_{1}\defeq K_{\wp,f_{1}}$ and $X_{1}\xrightarrow{\varphi_{1}} X$ be the normalization of $X$ in the function field $K_{1}$. Let $X_{1}^{\circ}\defeq \varphi_{1}^{-1}(X^{\circ})$. It is an open subscheme of $X_{1}$ with complement $\varphi_{1}^{-1}(\{x_{1},\dots, x_{r}\})$. By Propositions \ref{AS split}, \ref{AS unramified} and \ref{AS totally-ramified}, $\varphi_{1}$ is étale over $X^{\circ}$ and totally ramified over each $x_{i}$. In particular, $\varphi_{1}^{-1}(x_{i})$ consists of a single point of $X_{1}$. We can then iterate this construction to recursively construct a tower $\varphi\colon Y=X_{N}\to X_{N-1}\to\cdots\to X_{0}=X$ of finite surjective morphisms with each $X_{i}$ normal, connected and integral such that it induces a tower $L=K_{N}/\cdots/K_{0}=K$ of Artin--Schreier extensions totally ramified over each $x_{i}$ and unramified over $X^{\circ}$. Thus, $\varphi$ is étale over $X^{\circ}$ . Let $Y^{\circ}=\varphi^{-1}(X^{\circ})$. It remains to prove that $\varphi^{\ast}P\to Y^{\circ}$ extends to $Y$. For that, observe that since $\varphi$ is totally ramified over each $x_{i}$, we have that $Y=Y^{\circ}\cup \{y_{1},\dots, y_{r}\}$, where $y_{i}\in Y$ is the unique point in $\varphi^{-1}(x_{i})$. Moreover, if $v_{L,i}$ is the valuation on $L$ induced by $y_{i}$, we have that $\mathcal{O}_{Y,y_{i}}$ is the valuation ring of $v_{L,i}$. By Proposition \ref{extensao multipla}, the torsor $P_{L}\to \Spec L$ extends to $\Spec \mathcal{O}_{Y,y_{i}}$ for each $i$. By spreading out, there exists open neighborhoods $W_{i}\subseteq Y$ of $y_{i}$ not containing any other $y_{i}$ and a $G$-torsor $P_{i}\to W_{i}$ which restricts to $P_{L}\to \Spec L$ at the generic point and such that $P_{i}\to W_{i}$ and $P_{Y^{\circ}}\to Y^{\circ}$ are isomorphic at $Y^{\circ}\cap W_{i}$. Fix such isomorphisms
    \[
    \psi_{i}\colon \restr{P_{i}}{Y^{\circ}\cap W_{i}}\xrightarrow{\sim}\restr{P_{Y^{\circ}}}{Y^{\circ}\cap W_{i}}. 
    \]
    Since $W_{i}\cap W_{j}\subseteq Y^{\circ}$, we have isomorphisms between $P_{i}$ and $P_{j}$ on $W_{i}\cap W_{j}$  given by the composition
    \[
    \psi_{ij}\colon \restr{P_{i}}{W_{i}\cap W_{j}}\xrightarrow{\psi_{i}}\restr{P_{Y^{\circ}}}{W_{i}\cap W_{j}}\xrightarrow{\psi_{j}^{-1}} \restr{P_{j}}{W_{i}\cap W_{j}}.
    \]
    Clearly, $\{\psi_{ij}\}_{i,j}$ satisfy the cocycle conditions on triple intersections, and therefore by Zariski gluing, we can find a torsor extending $\varphi^{\ast}P\to Y^{\circ}$.
\end{proof}

By setting $H=1$ in the previous Proposition, we obtain the following.

\begin{theorem}\label{unipotente curvas}
    Let $X/F$ be a normal integral curve over an algebraically closed field $F$ and $U/F$ a connected unipotent algebraic group. Let $x_{1},\dots, x_{r}$ be distinct closed points of $X$ and denote $X^{\circ}=X\setminus\{x_{1},\dots, x_{r}\}$. Let $P\to X^{\circ}$ be a $U$-torsor. Then there exists a normal connected integral curve $Y$ and a surjective finite morphism $Y\xrightarrow{\varphi} X$ étale over $X^{\circ}$ and totally (wildly) ramified over each $x_{i}$ such that the pullback torsor $P_{\varphi^{-1}(X^{\circ})}\to \varphi^{-1}(X^{\circ})$ extends to $Y$. Moreover, $f$ can be taken to be the normalization of $X$ in an Artin--Schreier tower $L/F(X)$.
\end{theorem}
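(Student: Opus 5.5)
This is the special case $G=U_{X}$, $H=1$ of Proposition \ref{seq unipotente curvas}, so I will check its hypotheses and then extract the "moreover" clause from how that proposition's proof builds $Y$. Set $G\defeq U\times_{F}X$, a connected unipotent group pulled back from $F$; it is affine, flat and of finite type over $X$. Take $H\defeq 1$ and use the exact sequence
\[
0\to U_{X}\xrightarrow{\mathrm{id}} U_{X}\to 1\to 0 .
\]
The only hypothesis to verify is that the induced $H$-torsor $P_{H}\to X^{\circ}$ extends to $X$. This holds trivially, since every torsor under the trivial group is trivial and extends as $X\to X$. Proposition \ref{seq unipotente curvas} then gives a normal connected integral curve $Y$ and a finite surjective $\varphi\colon Y\to X$, étale over $X^{\circ}$ and totally and wildly ramified over each $x_{i}$, such that $\varphi^{\ast}P$ extends to $Y$.

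For the "moreover" clause, I will recall how $Y$ was built in that proof. Let $K=F(X)$ and let $v_{i}$ be the valuation of $x_{i}$. The proof takes $N$ from Proposition \ref{extensao multipla}. It then uses Lemma \ref{approximation for curves} to choose $f_{1}\in K$ whose poles lie only at the $x_{i}$, with orders prime to $p$ and as large as required. This gives an Artin--Schreier extension $K_{1}=K_{\wp,f_{1}}$, which is totally ramified at each $x_{i}$ by Lemma \ref{AS totally-ramified} and unramified elsewhere by Lemmas \ref{AS split} and \ref{AS unramified}. The step is iterated $N$ times. So $Y$ is by construction the normalization of $X$ in the top field $L$ of an Artin--Schreier tower $L=K_{N}/\cdots/K_{0}=F(X)$, which is exactly the clause.

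Two points need care. First, at each stage of the tower Lemma \ref{approximation for curves} must be applied on the curve $X_{j}$ to its unique points over the $x_{i}$. Its condition $\sum n_{i}\ge 2g(X_{j})$ is harmless because we are free to take the pole orders large. The pole orders must also be prime to $p$ and at least the bounds $s_{i,j}\ge M$ coming from Proposition \ref{extensao multipla}, which is possible because those bounds are only lower bounds. Second, Proposition \ref{extensao multipla} uses algebraically closed residue fields, which holds because $F$ is algebraically closed and the points are closed, and it allows Remark \ref{sep fechado} to replace the uniformizer powers $t^{-s}$ by an arbitrary $f$ of the right valuation.

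I expect the main obstacle to be consistency of the local requirements across the tower. Proposition \ref{extensao multipla} is phrased over $K$ with valuation conditions at every level of the tower. I would check that, because each step is totally ramified at the $x_{i}$, the valuations at the unique points above the $x_{i}$ are the ones to which the single-valuation Theorem \ref{seq unipotente} applies level by level, exactly as in that theorem's inductive proof. Given this, the statement is immediate.
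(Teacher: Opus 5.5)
Your proposal is correct and matches the paper's argument: the theorem is obtained by specializing Proposition \ref{seq unipotente curvas} to $G=U_{X}$ and $H=1$, where the extension hypothesis on $P_{H}$ is vacuous, and the ``moreover'' clause is read off from the iterated normalizations in the Artin--Schreier extensions $K_{\wp,f_{i}}$ built in that proposition's proof. Your extra checks are consistent with that construction, namely applying Lemma \ref{approximation for curves} on each intermediate curve $X_{j}$ at the unique points over the $x_{i}$, and treating the pole orders as lower bounds.
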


Before finishing this section, let me observe that the exact same argument as in the proof of Theorem \ref{unipotente curvas} proves the following.

\begin{proposition}\label{unipotent generic}
    Let $X/F$ be a normal integral curve over an algebraically closed field $F$ and $G/F$ an algebraic group. Let $x_{1},\dots, x_{r}$ be distinct closed points of $X$ and denote $X^{\circ}=X\setminus\{x_{1},\dots, x_{r}\}$. Let $P\to X^{\circ}$ be a $G$-torsor and $K$ be the function field of $X$. If there exists a unipotent subgroup $U\leq G$ such that $P_{K}\to \Spec K$ admits a reduction to a $U$-torsor, then $P\to X^{\circ}$ étale extends to $X$.
\end{proposition}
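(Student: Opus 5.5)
The plan is to reduce to the situation of Proposition \ref{seq unipotente curvas} and Theorem \ref{unipotente curvas}, using the generic reduction to $U$ to control the boundary points. Let $Q_K\to\Spec K$ be a $U$-torsor with $Q_K\times^{U}G\cong P_K$.

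\emph{Step 1: spread out the reduction.} The torsor $Q_K$ is of finite presentation, so it spreads out to a $U$-torsor $Q\to V$ on some dense open $V\subseteq X^{\circ}$. Shrinking $V$ further, the isomorphism $Q_K\times^U G\cong P_K$ spreads out to an isomorphism $Q\times^U G\cong \restr{P}{V}$. Enlarge the set of boundary points to $X\setminus V=\{x_1,\dots,x_r,x_{r+1},\dots,x_{r'}\}$.

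\emph{Step 2: extend $Q$ over a cover.} First I would reduce to the case where $U$ is connected. The group $\pi_0(U)$ is a finite étale unipotent group, and its induced torsor can be trivialized on a finite étale connected cover of $V$, following Propositions \ref{finite etale extension} and \ref{reduction to connected}. The normalization of $X$ in that cover is finite over $X$, étale over $V$, and is again a normal curve. If that cover ramifies only over $X\setminus V$, then over it $Q$ reduces to $U^{\circ}$.

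Now apply Theorem \ref{unipotente curvas} to this $U^{\circ}$-torsor on the complement of finitely many points. This gives a finite surjective $\varphi\colon Y\to X$ with $Y$ normal and connected, étale over $V$, such that $\varphi^\ast Q$ extends to a $U$-torsor $\widetilde{Q}$ on $Y$. Then $\widetilde{Q}\times^U G$ is a $G$-torsor on $Y$ that agrees with $\varphi^\ast P$ on $\varphi^{-1}(V)$.

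\emph{Step 3: gluing.} Over $\varphi^{-1}(X^{\circ})$ we also have $\varphi^\ast P$. The two torsors are identified on $\varphi^{-1}(V)$, so Zariski gluing along this identification, exactly as at the end of the proof of Proposition \ref{seq unipotente curvas}, gives a $G$-torsor on $Y$ extending $\varphi^\ast P$.

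\emph{Main obstacle: étaleness over $X^{\circ}\setminus V$.} The curve $Y$ is built using Artin--Schreier functions with poles at all of $x_1,\dots,x_{r'}$. It is therefore typically ramified at the auxiliary points $x_{r+1},\dots,x_{r'}$, which lie in $X^{\circ}$, whereas we need $\varphi$ étale over all of $X^{\circ}$. I see two ways to handle this.

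The first is to use the freedom in Proposition \ref{extensao multipla}. That argument only needs $f_i$ with large prime-to-$p$ poles at the points where $Q$ fails to extend. At the auxiliary points, $P$ itself is defined, so we only need the gluing there, and for that we may take $f_i$ regular at those points. By Lemma \ref{AS split} and Lemma \ref{AS unramified}, a function regular at a point gives an Artin--Schreier extension unramified there. Such $f_i$, with prescribed large poles only at $x_1,\dots,x_r$, exist by Lemma \ref{approximation for curves}. But then $\varphi^\ast Q$ need not extend at the auxiliary points, so the local extension there must come from $P$ rather than from $Q$. The gluing then takes place on the generic fiber, where $P$ and $\widetilde{Q}\times^U G$ agree.

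The second is to apply Step 2 locally around each $x_i$ with $i\le r$, using $P$ near the other points, and glue by the Zariski argument. This only requires that the local extension of $P$ at $x_i$ via $Q$ be compatible on a punctured neighborhood, which holds because everything agrees at the generic point after shrinking.

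I expect the main difficulty to be making this bookkeeping precise: choosing $V$ and the tower so that the cover ramifies only over $\{x_1,\dots,x_r\}$, while still getting extensions at those points from the $U$-reduction.
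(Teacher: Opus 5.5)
\textbf{The main route has a gap, which your first repair closes.} Steps 1--3 do not prove the statement, and the obstruction you flag is genuine. Applying Theorem \ref{unipotente curvas} to $Q$ on $V$ forces Artin--Schreier functions with poles at $x_{r+1},\dots,x_{r'}\in X^{\circ}$, so $Y\to X$ ramifies over points of $X^{\circ}$.

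Your first repair is the right idea and is exactly the paper's argument (``the same argument as in the proof of Theorem \ref{unipotente curvas}''). However, it should replace Steps 1--2 rather than patch them. Theorem \ref{seq unipotente} and Proposition \ref{extensao multipla} take as input only the generic class $[Q_K]\in H^{1}(K,U)$ and the DVRs $\mathcal{O}_{X,x_1},\dots,\mathcal{O}_{X,x_r}$. So $Q$ never needs to be spread out, and no auxiliary points arise. Concretely:
\begin{itemize}
\item Build the tower $L/K$ from functions given by Lemma \ref{approximation for curves} whose polar divisors lie over $x_1,\dots,x_r$. Then $\varphi\colon Y\to X$ is étale over $X^{\circ}$ and totally ramified at each $x_i$.
\item By Proposition \ref{extensao multipla} (with $H=1$ and group $U$), $Q_L$ extends over each $\mathcal{O}_{Y,y_i}$, hence so does $P_L\cong Q_L\times^{U}G$.
\item Spread these out to neighbourhoods $W_i$ of $y_i$. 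Shrink them so that they are isomorphic to $\varphi^{\ast}P$ on $W_i\setminus\{y_i\}\subseteq Y^{\circ}$. Then Zariski-glue with $\varphi^{\ast}P$ as in Proposition \ref{seq unipotente curvas}.
\end{itemize}
Your second repair does not work as stated. Running Step 2 ``locally around $x_i$'' gives covers of neighbourhoods of $x_i$, not one finite cover of $X$. Producing a global cover brings you back to the choice of tower just described.

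\textbf{The reduction to connected $U$ fails.} Your reduction in Step 2 has a defect that neither repair touches. The $\pi_0(U)$-torsor $Q_K\times^{U}\pi_0(U)$ depends on the chosen reduction rather than on $P$, and it can ramify at points of $X^{\circ}$. For example, take $G=\mathbb{G}_a$, $P$ trivial and $U=\ker\wp\cong\mathbb{Z}/p\mathbb{Z}$. Since $H^{1}(K,\mathbb{G}_a)=0$, every $\mathbb{Z}/p\mathbb{Z}$-torsor over $K$ is a reduction of $P_K$. This includes $K_{\wp,f}$ with $f$ having a pole of order prime to $p$ at a point of $X^{\circ}$, which is ramified there by Lemma \ref{AS totally-ramified}.

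So a cover trivializing this $\pi_0(U)$-torsor need not be étale over $X^{\circ}$, and there is no Artin--Schreier freedom to exploit here. Proposition \ref{reduction to connected} does not help either, because this $\pi_0(U)$-torsor is not defined on all of $X^{\circ}$. The paper's one-line proof does not address this: the argument it refers to goes through Theorem \ref{seq unipotente} and Proposition \ref{extensao multipla}, both of which assume $U$ connected. You should either assume $U$ connected, as is effectively done there, or supply a separate argument. That argument would need a cover étale over $X^{\circ}$ that trivializes $Q_K\times^{U}\pi_0(U)$ locally at each $x_i$.
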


\section{Consequences for the Nori fundamental group}\label{5}
We start by recalling some definitions and properties related to the Nori fundamental group (see \cite{Nori}, \cite{Nori1982-tm}). For that, fix $F$ an algebraically closed field of positive characteristic and $X$ a reduced and connected $F$-scheme. Fix also a rational point $x\in X(F)$.

\begin{definition}
We define a category $N(X,x)$ as the category having as objects triples $(G,P,p)$ where
\begin{itemize}
\item $G$ is a finite $F$-group scheme,
\item $P\to X$ is a $G$-torsor,
\item $p\in P(F)$ over $x$.
\end{itemize}
A morphism $(G,P,p)\to (G^{\prime},P^{\prime},p^{\prime})$ is a pair $(\varphi,\Phi)$ consisting of a homomorphism
$\varphi:G\to G^{\prime}$ and a $\varphi$-equivariant morphism of $X$-schemes $\Phi:P\to P^{\prime}$ such that $\Phi(p)=p^{\prime}$.
\end{definition}

One can prove that $N(X,x)$ is cofiltered. (This is \cite[Proposition 2, p.85]{Nori1982-tm}). This leads to the following definition.
 
\begin{definition}(\cite[Chapter II]{Nori1982-tm})
The Nori fundamental group scheme of $(X,x)$ is the profinite $F$-group scheme
\[
\pi^{N}(X,x):=\lim_{(G,P,p)\in N(X,x)} G,
\]
with transition maps induced by the morphisms in $N(X,x)$.
\end{definition}

This group is analogous to the étale fundamental group in the sense that it classifies finite torsors.

\begin{proposition}\label{universal property pi^N}
For every (pro-)finite $F$-group scheme $G$, we have a bijection (functorial in $G$)
\[
\Hom_{F}(\pi^{N}(X,x),\,G)\xrightarrow{\sim}\{\text{Isomorphism classes of pointed $G$-torsors $(P,p)$ on $X$}\}.
\]
\end{proposition}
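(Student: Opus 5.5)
The plan is to build the bijection through the projection maps out of the limit, and to use cofilteredness of $N(X,x)$ to handle homomorphisms from the pro-object. I first treat a finite $F$-group scheme $G$ and then pass to pro-finite $G=\lim_j G_j$ formally.

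\textbf{Constructing the map.} Since $N(X,x)$ is cofiltered, I first form the universal pointed torsor $(\widetilde{X},\tilde{x}) \defeq \lim_{(G',P',p')} (P',p')$. This is a $\pi^{N}(X,x)$-torsor over $X$: it is a cofiltered limit of affine $X$-schemes with affine transition maps, so it is representable. Given a homomorphism $\rho\colon\pi^{N}(X,x)\to G$, I send it to the pushforward $\rho_{\ast}(\widetilde{X},\tilde{x})$, pointed by the image of $\tilde{x}$.

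\textbf{Factoring through a finite quotient.} When $G$ is finite, $\rho$ factors through some projection $\pi^{N}(X,x)\to G'$ with $(G',P',p')\in N(X,x)$. This is the standard fact that a homomorphism from an affine pro-group to a finite-type group factors through one of the terms, because $\mathcal{O}(G)$ is finitely generated and $\mathcal{O}(\pi^{N})=\operatorname{colim}\mathcal{O}(G')$. In that case $\rho_{\ast}\widetilde{X}\cong \rho'_{\ast}P'$, so the map is just pushforward of a finite torsor. I also need that the result does not depend on the chosen factorization, which follows from cofilteredness: any two factorizations are dominated by a common object of $N(X,x)$.

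\textbf{Inverse, injectivity, and the main obstacle.} A pointed $G$-torsor $(P,p)$ is itself an object of $N(X,x)$, so its projection $\pi^{N}\to G$ is a candidate inverse, and pushing $\widetilde{X}$ forward along this projection recovers $(P,p)$. For injectivity, I take two homomorphisms $\rho_1,\rho_2$ that give isomorphic pointed torsors. Factoring both through a common $(G',P',p')$ yields morphisms $(G',P',p')\to(G,P,p)$ in $N(X,x)$. The main obstacle is to show that two such morphisms become equal after precomposing with some morphism in the category. This is the equalizer part of cofilteredness in Nori's Proposition 2, and it is where reducedness and connectedness of $X$ are used: they ensure that a pointed equivariant map is determined by its group homomorphism, and that the image of a pointed torsor is a subtorsor under a closed subgroup. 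I would prove it by replacing $(G',P',p')$ with the pointed subtorsor that is the equalizer of the two maps, which exists because the category is closed under images and fiber products.

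\textbf{Pro-finite case and functoriality.} For pro-finite $G=\lim_j G_j$, both sides convert the limit in $G$ into a limit of sets: $\Hom$ does so by definition, and pointed torsors do so because of the rigidification by the points $p$. Functoriality in $G$ holds because pushforward is functorial.
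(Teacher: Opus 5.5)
Your proposal is correct and follows essentially the same route as the paper's sketch. You factor $\rho$ through the projection to some $(G',P',p')\in N(X,x)$, push forward, and invert by sending $(P,p)$ to its projection $\pi^{N}(X,x)\to G$; your universal torsor $\widetilde{X}$ only makes the forward map visibly independent of the chosen factorization. The step you call the main obstacle is in fact immediate, because a limit cone commutes with every morphism of the index category. So the two morphisms $(\psi_i,\Phi_i)\colon(G',P',p')\to(G,P,p)$ already give $\rho_1=\psi_1\circ\mathrm{pr}_{(G',P',p')}=\mathrm{pr}_{(G,P,p)}=\psi_2\circ\mathrm{pr}_{(G',P',p')}=\rho_2$, and the equalizer part of cofilteredness is only needed earlier, to make $\mathcal{O}(\pi^{N}(X,x))$ a filtered colimit in the factorization step.
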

\begin{proof}
    Let me give a brief overview of the proof. It is enough to prove the bijection for $G$ finite. Let $\pi^{N}(X,x)\xrightarrow{\varphi} G$ be a morphism. Since $G$ is finite, $\varphi$ factors through some finite group $H$, for some $(H,Q,q)\in N(X,x)$, i.e. $\varphi=\pi^{N}(X,x)\to H\xrightarrow{\psi} G$. We can then take the induced pointed $G$-torsor $(Q\times^{H}G,[q,1])$. This gives the map 
    \[
    \Hom_{F}(\pi^{N}(X,x),\,G)\to\{\text{isomorphism classes of pointed $G$-torsors $(P,p)$ on $X$}\}.
    \]
    On the other hand, given a pointed $G$-torsor $(P,p)$, by the definition of $\pi^{N}(X,x)$ there exists a morphism $\pi^{N}(X,x)\to G$. One can then show that these define inverse maps which are functorial in $G$.
\end{proof}

\begin{remark}
If $X$ is proper and integral, then $\pi^{N}(X,x)$ agrees with Nori's Tannakian fundamental group scheme attached to the Tannakian category of
essentially finite vector bundles on $X$ with fibre functor $x^{*}$. For details, see \cite{Nori1982-tm}.
\end{remark}

By taking appropriate subcategories of $N(X,x)$, we can define some variants of $\pi^{N}(X,x)$ (in fact quotients of $\pi^{N}(X,x)$) classifying finite torsors satisfying specific properties. For example, in the setting of Definition \ref{etale extension def}, one can define a variant of $\pi^{N}(X^{\circ},x)$ which detects only finite torsors which étale extend to $X$.

\begin{definition}
Let $X/F$ be a normal integral variety, $X^{\circ}\subseteq X$ a dense open subscheme and $x\in X^{\circ}(F)$. Let $N_{ext}(X^{\circ},x)$ be the full subcategory of $N(X^{\circ},x)$ whose objects are the triples $(G,P,p)$ where $P\to X^{\circ}$ étale extends to $X$. 
\end{definition}

\begin{proposition}
$N_{ext}(X^{\circ},x)$ is a cofiltered category.
\end{proposition}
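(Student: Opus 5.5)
To prove that $N_{ext}(X^{\circ},x)$ is cofiltered, I will verify Nori's conditions, following \cite[Proposition 2, p.85]{Nori1982-tm} for $N(X^{\circ},x)$. There are three of them: the category is nonempty; any two objects $(G_{1},P_{1},p_{1})$, $(G_{2},P_{2},p_{2})$ are dominated by a third object; and any two parallel morphisms $(\varphi,\Phi),(\varphi',\Phi')\colon (G,P,p)\rightrightarrows (G',P',p')$ are equalized by some morphism into $(G,P,p)$. Nonemptiness is clear, since the trivial torsor $(1,X^{\circ},x)$ extends to $X$. Since $N_{ext}(X^{\circ},x)$ is a full subcategory of the cofiltered category $N(X^{\circ},x)$, it suffices to show that the objects Nori uses to witness these conditions inside $N(X^{\circ},x)$ already lie in $N_{ext}(X^{\circ},x)$.

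\textbf{Products.} Given two objects, I will take $(G_{1}\times G_{2},P_{1}\times_{X^{\circ}}P_{2},(p_{1},p_{2}))$ together with its projections. By Lemma \ref{recobrimento comum}, there is a connected normal $Y\to X$, finite surjective and étale over $X^{\circ}$, over which both $P_{i}$ extend, say to $\overline{P}_{i}\to Y$. Then $\overline{P}_{1}\times_{Y}\overline{P}_{2}$ is a $(G_{1}\times G_{2})_{Y}$-torsor restricting to the pullback of $P_{1}\times_{X^{\circ}}P_{2}$ over $f^{-1}(X^{\circ})$. Although Lemma \ref{recobrimento comum} is stated for a common group $G$, its proof, which takes a component of the normalized fibre product, clearly works for two different groups.

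\textbf{Equalizers.} For parallel morphisms, Nori's construction takes the subgroup scheme $H\subseteq G$ equalizing $\varphi$ and $\varphi'$. One then shows that the locus $Q\subseteq P$ on which $\Phi=\Phi'$ is an $H$-torsor containing $p$; its inclusion into $(G,P,p)$ equalizes the two maps. The key point is that if $P$ extends over $Y$ to $\overline{P}$, then $Q$ extends as well. For this I will use that $P'$ also extends, to some $\overline{P}'$, and pass to a common cover $Y$ by Lemma \ref{recobrimento comum}.

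I then need $\Phi$ and $\Phi'$ to extend to $\varphi$-equivariant maps $\overline{P}\to\overline{P}'$ over $Y$. This is the step I expect to be the main obstacle. A map of torsors $\overline{P}\to\overline{P}'$ that is equivariant along $\varphi$ is the same as an isomorphism $\varphi_{\ast}\overline{P}\cong\overline{P}'$ of $G'$-torsors, i.e.\ a section of the finite $Y$-scheme $\underline{\mathrm{Isom}}(\varphi_{\ast}\overline{P},\overline{P}')$. This scheme is a twist of $G'$ and hence finite over $Y$. A section over the dense open $f^{-1}(X^{\circ})$ of the normal integral scheme $Y$ extends uniquely over $Y$ once we know it extends at the generic point of each boundary component. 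I would try to obtain this from the valuative criterion, since finite implies proper.

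Here lies a genuine subtlety: a generic section defines a map from the spectrum of the DVR to the finite scheme, but that scheme need not be normal. I expect that it extends anyway, because $\Spec\mathcal{O}$ is normal and the section is a morphism from a normal integral scheme into a scheme finite over it. Concretely, the closure of the image of the generic section is finite and birational over the normal $Y$, hence isomorphic to $Y$ by Zariski's main theorem. This gives the section over codimension-one points. I then extend it to all of $Y$ by the same closure argument applied globally.

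With the extensions $\overline{\Phi},\overline{\Phi}'$ in hand, the equalizer of $\overline{\Phi}$ and $\overline{\Phi}'$ is an $H_{Y}$-torsor extending $Q$. The argument that this equalizer is a torsor is the same fppf-local argument Nori uses over $X^{\circ}$, so this finishes the proof.
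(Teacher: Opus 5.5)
Your proposal is correct. It follows the same overall strategy as the paper: pass to a common cover $Y$ via Lemma \ref{recobrimento comum}, then redo Nori's limit constructions for the extended torsors over the normal connected scheme $Y$.

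\textbf{Comparison with the paper.} The paper organises the argument differently. It says it suffices to show that $N_{ext}(X^{\circ},x)$ is closed under Nori's fibre products $(G_{1}\times_{G}G_{2},P_{1}\times_{P}P_{2},p_{1}\times p_{2})$; together with the terminal object $(1,X^{\circ},x)$ this gives all finite limits, hence cofilteredness. It then simply asserts that ``the fiber product of the extensions exists over $Y$''. That sentence tacitly needs the structure maps $P_{i}\to P$ to extend to equivariant maps $\overline{P}_{i}\to\overline{P}$ between the chosen extensions over $Y$, and the paper never justifies this.

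Your split into products and equalizers has a small advantage: the product step needs no maps at all. For the equalizer step you supply exactly the ingredient the paper omits. An equivariant map is a section of the finite $Y$-scheme $\underline{\mathrm{Isom}}(\varphi_{\ast}\overline{P},\overline{P}')$. A section over the dense open $f^{-1}(X^{\circ})$ extends, because the reduced closure of its image is finite and birational over the normal integral $Y$, hence isomorphic to it. The same argument would also complete the paper's fibre-product proof.

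\textbf{Minor comments.}
\begin{enumerate}
\item The ``subtlety'' you raise is not really one. The valuative criterion for the finite, hence proper, morphism $\underline{\mathrm{Isom}}\to Y$ needs no normality of the target. In any case your closure argument works globally at once, so the detour through codimension-one points is unnecessary.
\item Nori's proof that the equalizer is an $H$-torsor is not purely fppf-local. It uses that the base is reduced and connected, and that the equalizer is a torsor somewhere. Over a nonreduced base it can fail to be flat. For example, over $\Spec F[\epsilon]/(\epsilon^{2})$, take the identity and translation by $\epsilon$ as two automorphisms of the trivial $\alpha_{p}$-torsor. Their equalizer is cut out by $\epsilon=0$, which is not flat. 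In your situation this is harmless, since $Y$ is normal and connected and the equalizer restricts to $f^{\ast}Q$ over $f^{-1}(X^{\circ})$, but you should say so.
\end{enumerate}
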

\begin{proof}
    It is enough to show that $N_{ext}(X^{\circ},x)$ has fiber products. In \cite[Proposition 2, p.85]{Nori1982-tm} Nori shows that, since $X^{\circ}$ is reduced and connected, $N(X^{\circ},x)$ has fiber products given by
    \[
        (G_{1},P_{1},p_{1})\times_{(G,P,p)}(G_{2},P_{2},p_{2})=(G_{1}\times_{G}G_{2},P_{1}\times_{P}P_{2},p_{1}\times p_{2}).
    \]
    It is then enough to show that $N_{ext}(X^{\circ},x)$ is closed under fiber products in $N(X^{\circ},x)$. Consider then morphisms $(G_{i},P_{i},p_{i})\to (G,P,p)$ in $N_{ext}(X^{\circ},x)$ $(i=1,2)$. By Proposition \ref{recobrimento comum}, we can find a finite morphism $f\colon Y\to X$ , with $Y$ a connected normal variety over $F$ ,where both $P$ and the $P_{i}$'s extend. Since $Y$ is reduced and connected, the fiber product of the extensions exists over $Y$ and clearly extends the pullback of $(G_{1}\times_{G}G_{2},P_{1}\times_{P}P_{2},p_{1}\times p_{2})$ to $f^{-1}(X^{\circ})$. This shows that $N_{ext}(X^{\circ},x)$ is closed under fiber products.
\end{proof}

\begin{definition}
We define the profinite group scheme $\pi^{n}(X^{\circ},x)$ as the inverse limit
\[
\pi^{n}(X^{\circ},x)\defeq\lim_{(G,P,p)\in N_{ext}(X,x)} G.
\]
\end{definition}

\begin{proposition}
For every (pro-)finite $F$-group scheme $G$, we have a functorial bijection
\[
\Hom_{F}(\pi^{n}(X^{\circ},x),\,G)\xrightarrow{\sim} \left\{
\begin{array}{c}
\text{isomorphism classes of pointed étale extendable}\\
\text{$G$-torsors on $X^{\circ}$}
\end{array}
\right\}.
\]
\end{proposition}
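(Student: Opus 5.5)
The plan is to copy the proof of Proposition \ref{universal property pi^N}, with the category $N(X^{\circ},x)$ replaced by its full subcategory $N_{ext}(X^{\circ},x)$. As there, it suffices to treat finite $G$. For pro-finite $G=\lim G_{j}$, both sides turn limits in $G$ into limits: on the left because $\Hom_{F}(\pi^{n},\lim G_{j})=\lim \Hom_{F}(\pi^{n},G_{j})$. On the right, a pointed étale extendable $G$-torsor is read as a compatible system of pointed étale extendable $G_{j}$-torsors. Pointedness rigidifies the torsors, so the compatible system of isomorphism classes lifts to a genuine system.

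\textbf{Map from homomorphisms to torsors.} Let $\varphi\colon\pi^{n}(X^{\circ},x)\to G$ with $G$ finite. Since $G$ is of finite type and $\pi^{n}$ is a cofiltered limit of the groups of $N_{ext}(X^{\circ},x)$, the map $\varphi$ factors as $\pi^{n}\to H\xrightarrow{\psi}G$ for some $(H,Q,q)\in N_{ext}(X^{\circ},x)$. Send $\varphi$ to the pointed torsor $(Q\times^{H}G,[q,1])$. This torsor is étale extendable. Indeed, if $f\colon Y\to X$ is finite, étale over $X^{\circ}$, and $Q_{Y}$ is an $H_{Y}$-torsor extending $f^{\ast}Q$, then $Q_{Y}\times^{H}G$ is a $G_{Y}$-torsor on $Y$. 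Its restriction to $f^{-1}(X^{\circ})$ is $f^{\ast}(Q\times^{H}G)$, because contracted products commute with base change. Well-definedness, i.e. independence of the chosen factorization, follows as in Nori's case. Two factorizations are dominated by a common object of the cofiltered category $N_{ext}(X^{\circ},x)$, and the induced torsors agree through the transition morphisms. Cofilteredness is exactly the preceding proposition.

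\textbf{Map from torsors to homomorphisms.} Let $(P,p)$ be a pointed étale extendable $G$-torsor. Then $(G,P,p)$ is itself an object of $N_{ext}(X^{\circ},x)$, so the limit comes with a projection $\pi^{n}(X^{\circ},x)\to G$. Isomorphic pointed torsors are isomorphic objects of the category, and they give the same projection up to the identification of the groups.

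\textbf{Inverse maps and functoriality.} Going from torsor to homomorphism to torsor returns $(P\times^{G}G,[p,1])\cong(P,p)$. For the other composition, a $\varphi$ factoring through $(H,Q,q)$ gives the morphism $(\psi,\Phi)\colon(H,Q,q)\to(G,Q\times^{H}G,[q,1])$ in $N_{ext}$. Hence the projection to $G$ equals $\psi\circ(\text{projection to }H)=\varphi$. Functoriality in $G$ is clear from the construction.

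\textbf{Main obstacle.} The delicate step is the factorization claim and the resulting well-definedness. One needs that a homomorphism from a cofiltered limit of finite (possibly non-reduced) group schemes to a finite group scheme factors through a stage, and that the image can be handled inside the subcategory. The factorization holds because $\mathcal{O}(\pi^{n})=\operatorname{colim}\mathcal{O}(H)$ and $\mathcal{O}(G)$ is finitely generated. Closure of $N_{ext}$ under the operations used is what makes the rest work: fiber products, which is the previous proposition, and induction along homomorphisms, which is the observation above.
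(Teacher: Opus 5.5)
Your proposal is correct and takes the paper's route: the paper only says the argument is ``completely analogous'' to that of Proposition~\ref{universal property pi^N}, and you carry out that transfer to the subcategory $N_{ext}(X^{\circ},x)$. The one genuinely new point is that $Q\times^{H}G$ is again étale extendable, since $Q_{Y}\times^{H}G$ extends $f^{\ast}(Q\times^{H}G)$; the paper leaves this implicit, and you verify it correctly.
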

\begin{proof}
The proof is completely analogous to that of Proposition \ref{universal property pi^N}.
\end{proof}

Since any finite torsor over $X^{\circ}$ coming from $X$ trivially étale extends to $X$, from the universal property, one sees that the morphism $\pi^N(X^{\circ},x)\to \pi^{N}(X,x)$ (which corresponds to pulling back from $X$ to $X^{\circ}$) factors through $\pi^{n}(X^{\circ},x)$. Moreover, one can easily see that all of the maps
\[
\pi^{N}(X^{\circ},x)\to\pi^{n}(X^{\circ},x)\to\pi^{N}(X,x)
\]
are surjective (faithfully flat). One can then think of $\pi^{n}(X^{\circ},x)$ as being some kind of intermediary group living between the fundamental groups of $X^{\circ}$ and $X$.

\begin{remark}
Under the preceding setup, in \cite{Biswas2022-dg} they define a group scheme $\pi^{n}(X^{\circ},x)$ by using ``formal orbifolds''. They prove that this group satisfies the same universal property as our $\pi^{n}(X^{\circ},x)$. Therefore, these two groups are isomorphic.
\end{remark}

One can also capture the unipotent part of these groups in a similar vein.

\begin{definition}
Let $N_{\mathrm{uni}}(X,x)$ be the full subcategory of $N(X,x)$ consisting of the triples $(G,P,p)$ with $G$ finite unipotent. For a dense open $X^{\circ}\subseteq X$ and $x\in X^{\circ}$, we also define a full subcategory $N_{\mathrm{uni, ext}}(X,x)$ as $N_{uni}(X^{\circ},x)\cap N_{ext}(X^{\circ},x)$. We then define
\[
\pi^{U}(X,x):=\lim_{(G,P,p)\in N_{uni}(X,x)} G
\]
and
\[
\pi^{u}(X^{\circ},x):=\lim_{(G,P,p)\in N_{uni, ext}(X^{\circ},x)} G.
\]
\end{definition}

Once again, as expected, we have the following universal properties.

\begin{proposition}
For every (pro-)finite unipotent $k$-group scheme $U$, we have functorial bijections
\[
\Hom_{F}(\pi^{U}(X,x),U)\xrightarrow{\sim}
\{\text{isomorphism classes of pointed }U\text{-torsors on }X\}.
\]
and
\[
\Hom_{F}(\pi^{u}(X^{\circ},x),\,U)\xrightarrow{\sim} \left\{
\begin{array}{c}
\text{isomorphism classes of pointed étale extendable}\\
\text{$U$-torsors on $X^{\circ}$}
\end{array}
\right\}.
\]
In particular, the canonical maps
\[
\pi^{N}(X,x)\to \pi^{U}(X,x)
\]
and
\[
\pi^{n}(X^{\circ},x)\to \pi^{u}(X^{\circ},x)
\]
exhibits
$\pi^{U}(X,x)$ (resp. $\pi^{u}(X^{\circ},x)$) as the maximal pro-unipotent quotient of $\pi^{N}(X,x)$ (resp. $\pi^{n}(X^{\circ},x)$), i.e. the initial morphism from $\pi^{N}(X,x)$ (resp. $\pi^{n}(X^{\circ},x)$) to a pro-unipotent $k$-group scheme.
\end{proposition}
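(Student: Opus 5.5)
The proof follows the same pattern as for Proposition \ref{universal property pi^N}, applied to the cofiltered subcategories $N_{\mathrm{uni}}(X,x)$ and $N_{\mathrm{uni,ext}}(X^{\circ},x)$. The first step is to check that these subcategories are cofiltered. The key point is closure under fiber products: the fiber product computed in $N(X,x)$ is $(G_{1}\times_{G}G_{2},P_{1}\times_{P}P_{2},p_{1}\times p_{2})$. Here $G_{1}\times_{G}G_{2}$ is a closed subgroup of the unipotent group $G_{1}\times G_{2}$, hence is unipotent, since subgroups and products of unipotent groups are unipotent. For the extended variant, we combine this with the closure of $N_{ext}$ under fiber products proved above, using Lemma \ref{recobrimento comum}.

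Next, for a finite unipotent $U$, we define the map from $\Hom_{F}(\pi^{u}(X^{\circ},x),U)$ to pointed étale extendable $U$-torsors. A homomorphism factors through some $H$ with $(H,Q,q)\in N_{\mathrm{uni,ext}}(X^{\circ},x)$, and we send it to the induced pointed torsor $(Q\times^{H}U,[q,1])$.

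We must check that induction preserves étale extendability. Suppose $Q$ extends to $Q_{Y}$ over some finite $Y\to X$ étale over $X^{\circ}$. Then $Q_{Y}\times^{H}U$ extends the pullback of $Q\times^{H}U$, because contracted products commute with base change.

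In the other direction, a pointed étale extendable $U$-torsor $(P,p)$ is itself an object of the category, so it yields a projection $\pi^{u}\to U$. That these two constructions are mutually inverse and functorial is then the same formal argument as in Proposition \ref{universal property pi^N}. The pro-finite case follows by passing to limits. The statement for $\pi^{U}(X,x)$ is the special case with no extension condition.

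For the maximal pro-unipotent quotient claim, we compose $\pi^{n}(X^{\circ},x)\to\pi^{u}(X^{\circ},x)$, which is defined by the inclusion of index categories. Given any homomorphism $\pi^{n}(X^{\circ},x)\to U$ with $U$ pro-unipotent, we reduce to $U$ finite unipotent by taking the limit over finite quotients. Such a homomorphism corresponds to a pointed étale extendable $U$-torsor, which by the universal property above comes from a unique map $\pi^{u}(X^{\circ},x)\to U$. Compatibility with the universal bijections gives the factorization and its uniqueness.

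We also need $\pi^{u}(X^{\circ},x)$ to be pro-unipotent, which holds because it is a limit of finite unipotent groups. Finally, surjectivity of $\pi^{n}\to\pi^{u}$ follows because each projection $\pi^{n}\to G$ for $G$ in the subcategory is faithfully flat, exactly as in Nori's setting.

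The main obstacle I expect is bookkeeping: the pointedness conditions, and the verification that the fiber-product index subcategory gives faithfully flat transition maps, so that the limit behaves as a quotient. I would handle both by citing Nori's argument (\cite[Chapter II]{Nori1982-tm}) verbatim, restricted to the unipotent subcategory.
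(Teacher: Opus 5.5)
Your core argument is the one the paper intends. The paper gives no proof here beyond ``as expected'', i.e.\ rerunning Proposition~\ref{universal property pi^N} on $N_{\mathrm{uni}}$ and $N_{\mathrm{uni,ext}}$. Your cofilteredness check, the two mutually inverse maps, the check that induction preserves étale extendability, and the derivation of the initial property from the two bijections are all fine.

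The step that fails as written is surjectivity. It is false that every projection $\pi^{n}(X^{\circ},x)\to G$ with $(G,P,p)$ in the subcategory is faithfully flat. Take $G\neq 1$ finite unipotent: the trivial object $(G,G\times X^{\circ},(1,x))$ lies in $N_{\mathrm{uni,ext}}(X^{\circ},x)$ and receives a morphism from the final object $(1,X^{\circ},x)$, so its projection is trivial. For the same reason, transition maps need not be faithfully flat. Nori's faithful flatness only holds for objects whose torsor admits no reduction of structure group to a proper subgroup.

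The simplest repair is formal. Let $\iota\colon I\hookrightarrow\pi^{u}(X^{\circ},x)$ be the closed image of $c\colon\pi^{n}(X^{\circ},x)\to\pi^{u}(X^{\circ},x)$, and $c'\colon\pi^{n}(X^{\circ},x)\to I$ the (faithfully flat) corestriction. Since $I$ is pro-finite unipotent, your initial property gives $r\colon\pi^{u}(X^{\circ},x)\to I$ with $r\circ c=c'$. Then $\iota\circ r\circ c=c$, and uniqueness forces $\iota\circ r=\mathrm{id}$, so $\iota$ is an isomorphism and $c$ is faithfully flat. Similarly, for a pro-unipotent target that is not pro-finite, reduce to finite unipotent targets via images of finite quotients of $\pi^{n}(X^{\circ},x)$, not finite quotients of the target; $\mathbb{G}_{a}$ has none.

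Second, ``the pro-finite case follows by passing to limits'' is fine for $\pi^{U}$, but not for $\pi^{u}$ under the paper's definition. For $\pi^{u}$ it only produces a compatible family of étale extendable $U_{j}$-torsors, each possibly needing its own cover. Definition~\ref{etale extension def} instead asks for a single $Y$ for the whole $U$-torsor, and these genuinely differ.

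Take $X^{\circ}=\mathbb{A}^{1}\subseteq\mathbb{P}^{1}$ and $U=\prod_{n\geq 1}\alpha_{p}$, with components the classes of $t^{m_{n}}$, where $p\nmid m_{n}\to\infty$. A single cover would give a finite separable $L/F((u))$, with $u=1/t$, $\mathcal{O}_{L}=F[[\pi]]$ and ramification index $e$, such that $u^{-m_{n}}=l_{n}^{p}+o_{n}$ with $o_{n}\in\mathcal{O}_{L}$ for all $n$. Applying $d/d\pi$, which kills $l_{n}^{p}$ and preserves $\mathcal{O}_{L}$, gives $(m_{n}+1)e\leq v_{L}(du/d\pi)<\infty$. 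This fails for large $n$.

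So for pro-finite $U$ the right-hand side must be read levelwise; the same caveat applies to the paper's universal property for $\pi^{n}$. With that reading, your limit argument goes through.
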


Of course, we have natural maps between these groups. For example, we have a surjection $\pi^{N}(X^{\circ},x)\twoheadrightarrow \pi^{n}(X^{\circ},x)$ induced by the inclusion
\[
\left\{
\begin{array}{c}
\text{isomorphism classes of pointed étale extendable}\\
\text{$U$-torsors}
\end{array}
\right\}
\hookrightarrow
\left\{
\begin{array}{c}
\text{isomorphism classes}\\
\text{of pointed $U$-torsors}
\end{array}
\right\}.
\]

The kernel of this map is not known to be trivial or not, but using the results of the last section, we can at least guarantee that ``there is no unipotent part'' in the kernel for curves.

\begin{theorem}\label{unipotent fundamental group iso}
Let $X/F$ be a normal separated integral curve over an algebraically closed field $F$ of characteristic $p>0$. Let $x_{1},\dots, x_{r}$ be distinct closed points of $X$ and denote $X^{\circ}=X\setminus\{x_{1},\dots, x_{r}\}$. Let $x\in X^{\circ}(F)$. Then the morphism $\pi^{U}(X^{\circ},x)\to \pi^{u}(X^{\circ},x)$ induced by $\pi^{N}(X^{\circ},x)\twoheadrightarrow \pi^{n}(X^{\circ},x)$ is an isomorphism.
\end{theorem}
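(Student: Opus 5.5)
The plan is to compare the two groups through their universal properties. Both $\pi^{U}(X^{\circ},x)$ and $\pi^{u}(X^{\circ},x)$ are pro-unipotent profinite $F$-group schemes. The morphism between them is the one induced, via the universal properties in the last proposition, by the inclusion of pointed étale extendable $U$-torsors on $X^{\circ}$ into all pointed $U$-torsors on $X^{\circ}$, for $U$ finite unipotent. By a Yoneda argument in the category of pro-finite unipotent group schemes, it suffices to show that for every finite unipotent $F$-group scheme $U$ this inclusion is a bijection. Concretely, we must show that every $U$-torsor $P\to X^{\circ}$ étale extends to $X$ in the sense of Definition \ref{etale extension def}. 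Naturality in $U$ is automatic, since both bijections come from the same construction: restrict a morphism from the limit to some finite quotient and induce.

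To prove that every finite unipotent torsor étale extends, first reduce to connected $U$ by Proposition \ref{reduction to connected}. The group $\pi_{0}(U)$ is a finite étale unipotent group over the algebraically closed field $F$, hence a constant $p$-group. Let $P_{0}$ be the induced $\pi_{0}(U)$-torsor and take a connected component $T$ of it. Then $T\to X^{\circ}$ is finite étale, and its normalization $Z$ in $K(T)$ is again a normal integral curve over $F$. The complement of $T$ in $Z$ is the finite set of points lying over $x_{1},\dots,x_{r}$. Condition (2) of Proposition \ref{reduction to connected} then asks that every $U^{\circ}$-torsor on $T$ étale extend to $Z$. This is exactly Theorem \ref{unipotente curvas} applied to the curve $Z$, the open set $T=Z\setminus\{\text{finitely many points}\}$ and the connected unipotent group $U^{\circ}$. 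That theorem produces a normal connected curve $Y\to Z$, finite, surjective and étale over $T$, to which the pulled-back torsor extends.

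One compatibility point must be checked: Proposition \ref{reduction to connected} is phrased with "étale extends" meaning an extension over some finite $Y\to Z$ étale over $T$. Composing $Y\to Z\to X$ gives a finite surjective morphism that is étale over $X^{\circ}$, because $Z\to X$ is étale over $X^{\circ}$ and its preimage of $X^{\circ}$ is exactly $T$. So $P$ étale extends to $X$. Hence the map of pointed torsor sets is surjective, and it is trivially injective.

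The main obstacle is the formal step from a bijection of torsor sets to an isomorphism of pro-unipotent groups. I would argue it as follows. Since $\pi^{u}(X^{\circ},x)$ is a cofiltered limit over $N_{\mathrm{uni,ext}}(X^{\circ},x)$, and we have just shown that $N_{\mathrm{uni,ext}}(X^{\circ},x)=N_{\mathrm{uni}}(X^{\circ},x)$ as full subcategories of $N(X^{\circ},x)$, the two limits are literally taken over the same diagram. The induced morphism is therefore the identity on the limit, and in particular an isomorphism. This route also avoids any subtlety about whether the bijection respects points $p$ over $x$, because the objects of the two categories coincide as pointed triples.
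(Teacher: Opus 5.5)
Your proposal is correct and follows the paper's route: everything reduces to showing that, for $U$ finite unipotent, every $U$-torsor on $X^{\circ}$ étale extends to $X$, and this comes from Theorem \ref{unipotente curvas}. You are in fact more complete than the paper, whose proof applies that theorem (stated only for connected $U$) directly to an arbitrary finite unipotent $U$. Your passage through Proposition \ref{reduction to connected} supplies exactly this missing reduction, by applying Theorem \ref{unipotente curvas} to $U^{\circ}$ on $T\subseteq Z$ with $Z$ the normalization of $X$ in $K(T)$. Your remark that $N_{\mathrm{uni,ext}}(X^{\circ},x)=N_{\mathrm{uni}}(X^{\circ},x)$ also makes the final identification of the two limits immediate.
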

\begin{proof}
Let $U/F$ be a finite unipotent group. Clearly, every $F$-morphism $\pi^{N}(X^{\circ},x)\to U$ (resp. $\pi^{n}(X^{\circ},x)\to U$) factors trough $\pi^{U}(X^{\circ},x)$ (resp. $\pi^{u}(X^{\circ},x)$), and therefore, we have a commutative diagram
\[
\begin{tikzcd}
    \Hom_{F}(\pi^{u}(X^{\circ},x),U)\ar[r, "\sim"]\ar[d] & \Hom_{F}(\pi^{n}(X^{\circ},x),U)\ar[d]
    \\
    \Hom_{F}(\pi^{U}(X^{\circ},x),U)\ar[r, "\sim"] & \Hom_{F}(\pi^{N}(X^{\circ},x),U)
\end{tikzcd}
\]
with isomorphisms on the horizontal arrows. Thus, it is enough to prove that
\[
    \Hom_{F}(\pi^{n}(X^{\circ},x),U)\to \Hom_{k}(\pi^{N}(X^{\circ},x),U)
\]
is an isomorphism for all $U/F$ finite unipotent. By the universal properties of $\pi^{N}$ and $\pi^{n}$, this corresponds to proving that the map
\[
\left\{
\begin{array}{c}
\text{isomorphism classes of pointed étale extendable}\\
\text{$U$-torsors}
\end{array}
\right\}
\hookrightarrow
\left\{
\begin{array}{c}
\text{isomorphism classes}\\
\text{of pointed $U$-torsors}
\end{array}
\right\}
\]
is an isomorphism. This follows from Proposition \ref{unipotente curvas}.
\end{proof}

\printbibliography
\end{document}